\providecommand{\U}[1]{\protect\rule{.1in}{.1in}}
\newtheorem{theorem}{Theorem}
\newtheorem{acknowledgement*}{Acknowledgement}
\newtheorem{corollary}[theorem]{Corollary}
\newtheorem{definition}[theorem]{Definition}
\newtheorem{example}[theorem]{Example}
\newtheorem{lemma}[theorem]{Lemma}
\newtheorem{proposition}[theorem]{Proposition}
\newtheorem{remark}[theorem]{Remark}
\newcommand{\rr}{\mathbb{R}}
\newcommand{\hh}{\mathbb{H}}
\newcommand{\cb}{\mathcal{B}}
\newcommand{\sect}{\operatorname{Sect}}
\newcommand{\intM}{\mathrm{int}\, M}
\newcommand{\N}{\mathcal{N}}
\newcommand{\D}{\mathcal{D}}
\newcommand{\G}{\text{ }\!^{\D}\!G}
\newcommand{\inte}{\mathrm{int}}
\newcommand{\pma}{\partial_{\mathcal{M}}}
\newcommand{\CSH}{\mathcal{SH}}
\newcommand{\CH}{\mathcal{H}}
\begin{document}

\title[Dirichlet parabolicity and $L^1$-Liouville Property]{Dirichlet parabolicity and $L^1$-Liouville property under localized geometric conditions}

{\author{Leandro F. Pessoa${}^*$}
\address{Departamento de Matem\'{a}tica\\Universidade Federal do Piau\'{i}-UFPI\\
64049-550, Teresina-Brazil} \email{leandropessoa@ufpi.edu.br} \thanks{${}^*$ Research partially supported  by CNPq-Brazil}}
{\author{Stefano Pigola ${}^\dagger$ }
\address{Dipartimento di Scienza e Alta Tecnologia\\Universit\`a  dell'Insubria - Como\\
via Valleggio 11, 22100 Como-Italy} \email{stefano.pigola@uninsubria.it} \thanks{${}^\dagger$ Research partially supported  by  the Italian GNAMPA}}
{\author{Alberto G. Setti ${}^\dagger$}
\address{Dipartimento di Scienza e Alta Tecnologia\\Universit\`a  dell'Insubria - Como\\
via Valleggio 11, 22100 Como-Italy} \email{alberto.setti@uninsubria.it}
\date{\today}
\keywords{$L^1$-Liouville property, Dirichlet parabolicity, manifolds with boundary.}

\begin{abstract}
We shed a new light on the $L^{1}$-Liouville property for positive, superharmonic functions by providing many evidences that its validity relies on geometric conditions localized on large enough portions of the space. We also present examples in any dimension showing that the $L^{1}$-Liouville property is strictly weaker than the stochastic completeness of the manifold. The main tool in our investigations is represented by the potential theory of a manifold with boundary subject to Dirichlet boundary conditions. The paper incorporates, under a unifying viewpoint, some old and new aspects of the theory, with a special emphasis on global maximum principles and on the role of the Dirichlet Green's kernel.
\end{abstract}

\maketitle

\section{Introduction}
It is well known that most potential theoretic properties of a complete Riemannian manifold only depend on its geometry at infinity, and more precisely, on the properties of its ends (namely the unbounded connected components) with respect to a given compact set. By way of example, a manifold is parabolic, stochastically complete or Feller, if and only if so are all of its ends. See, e.g., \cite{Grigoryan_Analytic, Li} concerning parabolicity, \cite{Bessa-Bar, BPS-Pota} for the stochastic completeness, and \cite{PS-JFA} for the Feller property. It follows that natural geometric conditions implying the validity of each of these properties on a complete manifold, such as volume growth conditions or curvature bounds, are required only outside a large enough compact set.\smallskip

A somewhat less traditional potential theoretic property that, so far, has not been completely emerged, is represented by the Liouville property for $L^{1}$ superharmonic functions. More precisely, we say that the $L^1$-Liouville property holds on  a Riemannian manifold $(M, g)$, or, in short, $M$ is $L^1$-Liouville, if every nonnegative, $L^1$ superharmonic function on $M$ is constant (and therefore vanishes whenever $M$ has infinite volume).
According to works by A. Grigor'yan, \cite{Grigoryan_stochastic_harmonic, Grigoryan_Analytic}, this is equivalent to the fact that the positive, minimal Green's kernel of the Laplace operator is not integrable. Roughly speaking, in stochastic terms this means that we have an infinite global expectation of the Brownian trajectories to explode in infinite time. Thus, parabolic and, more generally, stochastically complete  manifolds are $L^{1}$-Liouville in a trivial way.\smallskip

Some new insights into this property  were recently provided in \cite{BPS-Pota} where the authors construct two examples of  stochastically incomplete $L^1$-Liouville manifolds. Since stochastic completeness is equivalent to  the $L^1$-Liouville property  in models, the examples had to display a strong anisotropy with respect to any fixed origin. In both cases the idea is to start with a stochastically incomplete manifold, and to change the metric in a portion of the manifold so as to ensure that  the $L^1$-Liouville property holds.
This is achieved by using the invariance property of the Green's kernel under conformal changes of the metric, and for this reason the examples produced were two dimensional. The case of generic dimensions remained open. It should be pointed out that in one of the examples the manifold is obtained by gluing together two model manifolds, and therefore has two ends, while in the other the manifold has only one end, and the construction is carried out changing the metric in a portion of the same end.\smallskip

Once it is realised that, in general, the $L^{1}$-Liouville property is weaker than the stochastic completeness of the manifold, one is naturally led to investigate which geometric conditions imply its validity. In fact, the understanding of this property, in connection with the geometry of the space, has not been reached yet. In this direction, a heuristic argument indicates a possible strategy to attack the problem. Indeed, from the Brownian motion viewpoint, the stochastic completeness of $(M,g)$ means that the explosion time $\xi$ of the Brownian trajectories $X_{t}$ issuing from a point $x \in M$ is almost surely infinite. On the other hand, as we have mentioned before, the $L^{1}$-Liouville property could be translated in stochastic terms by saying that the mean of $\xi$ is infinite. Therefore, if sufficiently many trajectories $X_{t}$ escape to infinity in finite time we should have that $M$ is $L^{1}$-Liouville although $\xi$ is not a.s. infinite (stochastic incompleteness). Thus, we expect that the $L^{1}$-Liouville property holds on $M$ if there are sufficiently many directions where $X_{t}$ can explode in finite time. According to this picture, it is reasonable to guess that geometric conditions implying the $L^{1}$-Liouville property are similar to those forcing the stochastic completeness of the manifold, but they can be localized on sufficiently large portions of the space.
\smallskip

The main purpose of the present paper is to pursue this kind of investigation and to obtain information on the validity of the $L^1$-Liouville property of a manifold in terms of geometric properties of suitably large domains thereof. Specifically, we first point out that, as for parabolicity and stochastic completeness, the fact that a complete manifold is $L^{1}$-Liouville depends on its geometry at infinity (Proposition \ref{prop-L1-ends}, Theorem \ref{asymptotic_invariance_L1}). Next, we consider the case where a nonnegative Ricci curvature is concentrated into a geometric half space (Theorem \ref{thm_L1_nonnegative_curv}) and the case where a pinched negative curvature is localised in the complement of a (hyperbolic) half space (Theorem \ref {th-negativecurv}). Finally, manifolds containing warped product cones over a large compact hypersurface with boundary are proved to be $L^{1}$-Liouville by exhibiting a sharp measure in terms of the bottom of the spectrum of the hypersurface (Theorem \ref{th-cone}). \smallskip

A natural way to proceed is to regard such domains, or (possibly unbounded) parts of them, as independent manifolds with boundary, and investigate their potential theoretic properties in connection with the $L^1$-Liouville property of the whole manifold. Among the different possible boundary conditions, the Dirichlet one seems to be most relevant to the problem. \smallskip

Thus we consider the Dirichlet $L^1$-Liouville property of a manifold with boundary and its relationships with the integrability of its Dirichlet Green's kernel (Theorem~\ref{thm_D_L_1}). The main point is that if a manifold with boundary $(M,g)$ sits isometrically inside a manifold $(N,h)$ of the same dimension and $M$ is Dirichlet $L^1$-Liouville then $N$ is $L^1$-Liouville (Corollary~\ref{thm_comparison_dirichlet}). For this reason, a considerable effort is made to study the Dirichlet Green's kernel of a manifold with boundary, which we prove always exists (Theorem~\ref{th-DGreen}) and coincides with the Green's kernel of the interior set. Despite the fact that  the Dirichlet parabolicity of a manifold with boundary turns out to be unrelated to the existence of its Green's kernel, it plays an important role in our study. In \cite{IPS_Crelle}, but see also the Appendix of the present paper, it is observed that the Dirichlet parabolicity is implied by the corresponding property under Neumann conditions, thus showing for the first time a connection between Dirichlet parabolicity and the subquadratic volume growth of intrinsic balls. Although the geometry underlying the notion of Dirichlet parabolicity is still elusive, we feel that our investigations will be relevant in other areas such as in minimal surface theory. The potential theory of manifolds with boundary, especially  related to the notion of Dirichlet parabolicity, has attracted a considerable interest by the minimal surfaces community. The reader may consult the book \cite{MP-book} for an account of recent results and conjectures. \smallskip

We note that since the existence of a minimal positive Green's kernel amounts to the sub-criticality of the positive Laplace-Beltrami operator, there is an intriguing and stimulating relationship between Dirichlet parabolicity on the one hand and criticality theory and the behaviour of the minimal  Martin's kernels on the other. In this framework of investigation one obtains among other things an alternative proof of the existence of the Dirichlet
Green's kernel of a manifold with boundary.

Last, but not least, we mention that the tools developed to study the interplay between geometry and the $L^{1}$-Liouville property allow us to produce in a very direct way some examples in any dimension of $L^1$-Liouville manifolds that are stochastically incomplete (Section~\ref{StochVsL1}), thus completing the picture described in \cite{BPS-Pota} in dimension $2$.\smallskip

The paper is organized as follows:

\begin{itemize}
\item [-] In Section \ref{Sect_PotDir} we introduce some potential theory with Dirichlet boundary conditions on Riemannian manifolds with boundary. We both record, with complete proofs, known facts concerning the Dirichlet parabolicity of a manifold and we provide a new characterization in terms of an Ahlfors property for bounded subharmonic functions. This should be compared with the analogous characterization in the Neumann setting where subharmonic functions are only bounded from above. Moreover, we discuss in detail the existence of the minimal positive fundamental solution of the Laplace equation with Dirichlet boundary conditions and we show how this latter is (un)related to Dirichlet parabolicity. In the  final part of the section we describe the relationships between Dirichlet parabolicity and criticality theory alluded to above.

\item [-] Section \ref {section-DL1Liouville} is devoted to the notion of Dirichlet $L^{1}$-Liouville property on a manifold with nonempty boundary and its characterization in terms of integrability properties of the Dirichlet Green's kernel. This will be equivalent to the fact that the global Dirichlet mean exit time (also called the torsional function of the manifold) is a genuine function. It is also proved, via comparison arguments, that a Riemannian manifold without boundary is $L^{1}$-Liouville provided it contains a family of domains with smooth boundary whose global mean exit times diverge. Finally, it is obtained that the $L^{1}$-Liouville property, for manifolds with or without boundary, depends only on the geometry at infinity.

\item [-] In Section \ref{StochVsL1}, using the tools developed in the previous section, we give examples in any dimension of stochastically incomplete manifolds possessing the $L^{1}$-Liouville property.

\item [-] In Section \ref{section-localized} we move the first steps in the direction of proving that the $L^{1}$-Liouville property depends on geometric conditions localized on sufficiently large portion of the space. More precisely, in terms of curvature conditions, we show that the validity of the $L^{1}$-Liouville property is implied by a nonnegative Ricci curvature condition on a half space or a pinched negative curvature in the complement of a hyperbolic half space. We also give a sharp result concerning the Dirichlet $L^{1}$-Liouville property on a warped product cone and deduce that a manifold is $L^{1}$-Liouville once it contains a sufficiently large cone.

\item [-] A final Appendix discusses the equivalence of the Neumann and the Dirichlet parabolicity on manifolds with compact boundary (ends). In the route, we take the occasion to provide a complete proof of a version of the \textit{boundary point Lemma} and of the strong maximum principle for weak subsolutions of the Laplace equation on a manifold with boundary. These, in particular, justify completely some of the constructions made in \cite{IPS_Crelle}.
\end{itemize}

\section{Some potential theory with Dirichlet boundary conditions}\label{Sect_PotDir}

Let $(M, g)$  be a  Riemannian manifold with nonempty boundary $\partial M$ (defined as the set of points which have a smooth chart $(U,\phi)$ such that $\phi$ is a homeomorphism onto a half ball of $\mathbb{R}^m$ and the coefficients of the metric tensor in these coordinates extend smoothly to an open subset of $\mathbb{R}^{m}$). We denote by $\inte\, M = M\backslash \partial M$ the interior of $M$ (as a manifold with boundary). Following the notation of \cite{Grigoryan_laplace_eq} and \cite{IPS_Crelle}, for any open set $\Omega$ in $M$ we set $\inte\, \Omega = \Omega \cap \inte\, M$, $\partial\Omega= \partial(\inte \,\Omega)$ and let $\partial_0 \Omega = \partial \Omega \cap \inte\, M$ be the Dirichlet boundary of $\Omega$, and  $\partial_1 \Omega =  \bar{\Omega} \cap \partial M$ be the Neumann boundary of $\Omega$, so that $\partial\Omega= \partial_0\Omega\cup \partial_1\Omega$.

In this section we will describe some aspects of potential theory concerning the Dirichlet parabolicity and the Dirichlet Green's kernel of manifolds with boundary. These results will be helpful in the study of Dirichlet $L^1$-Liouville property which will be introduced in the next section.

\subsection{Dirichlet parabolicity}

As in the case of manifolds without boundary, Dirichlet parabolicity is better defined in terms of a constraint in the set of harmonic function (see \cite{Perez-Lopez_MRSI}, \cite{IPS_Crelle}).
\begin{definition}
We say that a manifold $M$ with nonempty boundary $\partial M$ is Dirichlet  parabolic, or in short, $\mathcal{D}$-parabolic if every bounded function $u\in C^\infty(\inte\, M)\cap C^0(M)$ satisfying
\begin{equation}
\label{D-parabolicity-def}
\left\lbrace
\begin{array}{rl}
\Delta u = 0 &\text{in} \ \  \mathrm{int}\, M\\
u=0 &\text{on} \ \ \partial M,
\end{array}\right.
\end{equation}
vanishes identically.
\end{definition}

\begin{remark}
 {\rm
Actually, the previous definition does not require that the manifold at hand has a smooth boundary. By way of example, the notion of $\D$-parabolicity could be extended to any closed domain $\bar \Omega \subset M$ inside the smooth manifold with boundary $M$. Occasionally, where there is not danger of confusion, we shall use this slight abuse of notation.
 }
\end{remark}

In some cases it is useful to compare the Dirichlet parabolicity with the more usual notion of Neumann parabolicity. Recall that $M$ is said to be (Neumann) $\mathcal{N}$-parabolic, or simply parabolic, if any bounded from above, weak solution of
\begin{equation}
\label{Neumann_parabolicity}
\left\lbrace\begin{array}{rl}
\Delta u \geq 0 &\text{in} \ \ \intM\\
\frac{\partial u}{\partial \nu}\leq 0 &\text{on} \ \ \partial M,
\end{array}\right.
\end{equation}
is constant. Here $\nu$ is the outward unit normal to $\partial M$. By a weak solution of
\eqref{Neumann_parabolicity}, we mean a function $u\in C^0(M)\cap W^{1,2}_{loc}(\intM) $ such that
\begin{equation}
\label{Neumann_parabolicity2}
-\int_{\intM} \langle \nabla u,\nabla \rho\rangle \geq 0, \qquad \forall\, 0\leq \rho\in C^\infty_c(M).
\end{equation}

Note that by a density argument (see, e.g., \cite{IPS_Crelle}, or \cite[Corollary 7]{PV_Extension}) one may
consider test functions $\rho\in W^{1,2}_{loc}(M)$.

In \cite{IPS_Crelle} the authors characterized the $\N$-parabolicity of manifolds with nonempty boundary in terms of an Ahlfors maximum principle, namely, given a smooth domain $\Omega \subset M$ with $\partial_{0} \Omega \not= \emptyset$, any bounded from above function $u \in C^{0}(\bar{\Omega})\cap W^{1,2}_{loc}(\inte\,\Omega) $ solution of
$$\left\lbrace
\begin{array}{rl}
\Delta u \geq 0 &  \text{in} \ \ \inte\, \Omega \\[0.2cm]
\frac{\partial u}{\partial \nu} \leq 0 & \text{in} \ \ \partial_1 \Omega,\\[0.2cm]
\end{array}\right.
$$
satisfies
$$ \sup_{\Omega} u = \sup_{\partial_{0} \Omega} u.$$

Observing that when $\Omega = M$ the above Neumann boundary condition plays no role, they obtained the following
\begin{corollary}[Theorem 0.10 in \cite{IPS_Crelle}]
\label{cor_IPS}
Let $M$ be a $\N$-parabolic manifold with nonempty boundary $\partial M$. If $u \in C^{0}(M)\cap W^{1,2}_{loc}(\intM) $ satisfies
$$\left\lbrace
\begin{array}{l}
\Delta u \geq 0 \ \ \text{in} \ \ \inte\, M \\[0.2cm]
\sup_{M} u < + \infty,
\end{array}\right.
$$
then
$$ \sup_{M} u = \sup_{\partial M} u.$$
\end{corollary}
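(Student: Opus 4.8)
The plan is to derive the equality directly from $\mathcal{N}$-parabolicity by a truncation argument — in the spirit of the Ahlfors-type maximum principle recalled above, but realizing the observation that for $\Omega = M$ there is no Dirichlet boundary, so $\partial M$ itself plays the role of $\partial_0\Omega$. Since $\sup_M u \geq \sup_{\partial M} u$ always holds and $c := \sup_{\partial M} u$ is a finite real number (being $\leq \sup_M u < +\infty$), it suffices to prove $u \leq c$ on $M$. For each $\varepsilon > 0$ I would consider the truncation $w_\varepsilon := (u - c - \varepsilon)^+ = \max\{u - c - \varepsilon,\, 0\} \geq 0$. Then $w_\varepsilon \in C^0(M) \cap W^{1,2}_{loc}(\inte\, M)$, it is bounded from above, and — since truncation by a constant preserves weak subharmonicity — it is a weak solution of $\Delta w_\varepsilon \geq 0$ in $\inte\, M$. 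The extra shift by $\varepsilon$ is the key device: as $u \leq c < c + \varepsilon$ on $\partial M$, continuity of $u$ on $M$ provides an $M$-open neighborhood of $\partial M$ on which $u < c+\varepsilon$ and hence $w_\varepsilon \equiv 0$; thus $\overline{\{w_\varepsilon > 0\}} \subseteq \{u \geq c + \varepsilon\}$ is a closed subset of $M$ disjoint from $\partial M$, and a.e.\ $\nabla w_\varepsilon = \chi_{\{u > c+\varepsilon\}}\nabla u$ is supported there.

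The next step is to upgrade $w_\varepsilon$ from a weak subsolution in the interior to a weak solution of the Neumann problem \eqref{Neumann_parabolicity}, i.e.\ to verify \eqref{Neumann_parabolicity2} for $w_\varepsilon$ with test functions supported up to the boundary. Given $0 \leq \rho \in C^\infty_c(M)$, the set $K_\varepsilon := \overline{\{u > c + \varepsilon\}} \cap \supp \rho$ is compact and contained in $\inte\, M$, so I can pick $\theta \in C^\infty_c(\inte\, M)$ with $0 \leq \theta \leq 1$ and $\theta \equiv 1$ on a neighborhood of $K_\varepsilon$. Since $\theta\rho \geq 0$ lies in $C^\infty_c(\inte\, M)$, weak subharmonicity of $w_\varepsilon$ in the interior gives $-\int_{\inte\, M}\langle \nabla w_\varepsilon, \nabla(\theta\rho)\rangle \geq 0$; and because $\rho\,\nabla w_\varepsilon$ is supported in $K_\varepsilon$, where $\theta \equiv 1$ and $\nabla\theta = 0$, one has $\langle\nabla w_\varepsilon, \nabla(\theta\rho)\rangle = \langle\nabla w_\varepsilon,\nabla\rho\rangle$ a.e., whence $-\int_{\inte\, M}\langle\nabla w_\varepsilon,\nabla\rho\rangle \geq 0$. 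Thus $w_\varepsilon$ is a bounded-from-above weak solution of \eqref{Neumann_parabolicity}.

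By $\mathcal{N}$-parabolicity of $M$, each $w_\varepsilon$ is therefore constant; since $w_\varepsilon = 0$ on $\partial M \neq \emptyset$, this forces $w_\varepsilon \equiv 0$, i.e.\ $u \leq c + \varepsilon$ on $M$. Letting $\varepsilon \to 0^+$ yields $u \leq c = \sup_{\partial M} u$, and hence $\sup_M u = \sup_{\partial M} u$, as claimed. I expect the one genuinely non-formal point to be the second paragraph — promoting $w_\varepsilon$ across $\partial M$ — and the role of the $\varepsilon$-shift is exactly to keep $\supp\nabla w_\varepsilon$ at positive distance from $\partial M$, so that a cut-off settles the matter; the remaining facts used (that truncating by a constant, $u \mapsto (u-k)^+$, preserves weak subharmonicity and keeps one in $C^0(M)\cap W^{1,2}_{loc}(\inte\, M)$, and the density extension of test functions noted after \eqref{Neumann_parabolicity2}) are standard. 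One may also package this slightly differently by running the truncate-and-pass-to-the-limit device once on $w := (u-c)^+$ to verify \eqref{Neumann_parabolicity2} for it and then invoking $\mathcal{N}$-parabolicity a single time.
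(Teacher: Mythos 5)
Your proof is correct. The paper itself does not reprove this corollary: it simply quotes Theorem 0.10 of \cite{IPS_Crelle}, presenting it as the Ahlfors maximum principle for $\N$-parabolic manifolds in the special case $\Omega=M$, where the Neumann condition on $\partial_1\Omega$ becomes vacuous. Your self-contained truncation argument is, however, exactly the mechanism this paper uses elsewhere, namely in the forward implication of Proposition~\ref{cns_n-parabolicity} in the Appendix: truncate at $\sup_{\partial M}u+\varepsilon$, note that the truncation is bounded, weakly subharmonic and vanishes on a neighborhood of $\partial M$, and invoke $\N$-parabolicity to force it to be the zero constant. The step the Appendix glosses over --- that a subsolution in $\inte\,M$ which vanishes near $\partial M$ automatically satisfies the weak Neumann inequality \eqref{Neumann_parabolicity2} for test functions $\rho\in C^\infty_c(M)$ that need not vanish on $\partial M$ --- is precisely what your cut-off $\theta$ supplies, and the verification is sound: the integrands $\langle\nabla w_\varepsilon,\nabla(\theta\rho)\rangle$ and $\langle\nabla w_\varepsilon,\nabla\rho\rangle$ can differ only on $\supp\nabla w_\varepsilon\cap\supp\rho\subseteq K_\varepsilon$, where $\theta\equiv 1$ and $\nabla\theta=0$, while the remaining ingredients (truncation preserves weak subharmonicity and the regularity class, $\nabla w_\varepsilon=\chi_{\{u>c+\varepsilon\}}\nabla u$ a.e.) are standard and used freely in the paper as well. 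A small bonus of your argument is that, unlike the statement of Proposition~\ref{cns_n-parabolicity}, it nowhere uses compactness of $\partial M$, in agreement with the generality claimed in Corollary~\ref{cor_IPS}.
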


It follows directly from Corollary \ref{cor_IPS} that every $\N$-parabolic manifold is $\mathcal{D}$-parabolic. However, the converse does not hold in general.
\begin{example}\label{ex_D_parabolicity}
{\rm
The closed half space $\bar \rr^{m+1}_{+} = \{(x,y) \in \rr^{m} \times \rr : y\geq0 \}$ of $\rr^{m+1}$, $m\geq 2$ is $\D$-parabolic. Indeed, let $u$ be a bounded harmonic function on $\rr^{m+1}_{+}$ which vanishes on $\partial \rr^{m+1}_{+}$. Then its odd reflection is bounded and harmonic on $\rr^{m+1}$ and therefore identically equal to zero by Liouville's theorem (see, e.g., \cite[Example 4.6]{Grigoryan_Analytic}). An alternative proof, which may be amenable to extensions to more general geometric settings, depends on the fact that, using  the Poisson kernel of the half space it can be proved that, for every bounded function $u \in C^{0}(\bar\rr^{m+1}_{+})$ which is harmonic in $\rr^{m+1}_{+}$ one has the estimate (\cite[Theorems 7.3, 7.5]{ABR})
\[
\| u \|_{L^{\infty}(\rr^{m+1}_{+})} \leq \| u \|_{L^{\infty}(\partial \rr^{m+1}_{+})},
\]
which, again, yields the $\D$-parablicity of $\bar \rr^{m+1}_{+}$. On the other hand, it is not Neumann parabolic, since it possesses a Neumann Green's kernel which is given explicitly in terms of the entire Green's kernel of $\rr^{m+1}$ by
\[
{}^{\mathcal{N}}\!G(x,y) = G(x,y)+G(x,y') \quad \text{where} \quad y'=(y_1,\dots,y_m, -y_{m+1}).
\]
}
\end{example}

Nevertheless, Dirichlet and Neumann parabolicity are equivalent in the case of compact boundary, and in particular for ends of an ambient manifold, viewed as manifolds with boundary. This is the content of Appendix \ref{appendix}. The hierarchy between $\mathcal{N}$-parabolicity and $\mathcal{D}$-parabolicity yields that all geometric conditions that imply the former are sufficient conditions for the validity of the latter. For instance, we have the following proposition due to A. Grigor'yan \cite{Grigoryan_laplace_eq} (see also \cite[Theorem 4.6, Remark 4.8]{IPS_Crelle} for a PDE proof in the $C^1$ case).

Recall that the Riemannian manifold $(M,g)$ with boundary $\partial M \not= \emptyset$ is complete if the metric space $(M,d)$ is complete, where $d(x,y)$ denotes the intrinsic distance defined as the infimum of the lengths of the piecewise $C^{1}$-paths connecting $x$ and $y$. In a complete manifold $(M,g)$ with boundary $\partial M \not=\emptyset$ the metric balls $B_{r}(o)=\{x\in M : d(x,o) < r\}$ and the metric spheres $\partial B_{r}(o)=\{x\in M : d(x,o) = r\}$ are relatively compact sets.

\begin{proposition}
\label{vol-growth-D-parabolicity}
Let $(M,g)$ be a complete Riemannian manifold with nonempty boundary $\partial M$, and assume that either
\begin{equation}
\tag{1}
\frac 1{\mathrm{vol}(\partial B_o(r))}\not\in L^1(+\infty)
\end{equation}
or
\begin{equation}
\tag{2}
\frac r{\mathrm{vol}(B_o(r))}\not\in L^1(+\infty),
\end{equation}
holds for some origin $o \in \inte\, M$. Then $(M,g)$ is $\mathcal{N}$- and therefore $\mathcal{D}$-parabolic.
\end{proposition}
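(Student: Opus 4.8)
The plan is to reduce everything to the classical volume-growth tests for $\mathcal{N}$-parabolicity on manifolds \emph{without} boundary by a doubling construction, and then invoke the hierarchy $\mathcal{N}\text{-parabolic} \Rightarrow \mathcal{D}\text{-parabolic}$ that was established above via Corollary~\ref{cor_IPS}. More precisely, I would first observe that the second implication of the statement is immediate once we know $(M,g)$ is $\mathcal{N}$-parabolic, so the whole content is to prove $\mathcal{N}$-parabolicity under either (1) or (2).

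\textbf{Doubling.} Let $D(M) = M \cup_{\partial M} M$ be the Riemannian double of $(M,g)$ across $\partial M$; this is a complete Riemannian manifold without boundary (with only $C^0$ metric along $\partial M$ if $\partial M$ is not totally geodesic, but Lipschitz regularity is enough for the capacity/volume-growth theory, or one may smooth the metric near $\partial M$ without affecting the growth order of balls). There is a natural isometric reflection $\sigma\colon D(M)\to D(M)$ fixing $\partial M$, and $M$ is identified with a fundamental domain. A bounded-above weak subsolution $u$ of the Neumann problem \eqref{Neumann_parabolicity} on $M$ extends to a bounded-above weak subsolution $\tilde u$ on $D(M)$ by even reflection: the Neumann condition $\partial u/\partial\nu\le 0$ is exactly what is needed for the reflected function to remain subharmonic across $\partial M$ in the weak sense \eqref{Neumann_parabolicity2} (testing against $\rho\in C^\infty_c(D(M))$ and splitting the integral over the two copies, the two boundary terms from integration by parts cancel against the inequality). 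Now the key comparison of volumes: for the geodesic balls $B^{D(M)}_r(o)$ centered at a point $o\in\inte\,M$, one has $\mathrm{vol}(B^{D(M)}_r(o)) \le 2\,\mathrm{vol}(B^M_{r}(o))$ up to adjusting the radius by a bounded amount (every point of $D(M)$ within distance $r$ of $o$ is, under $\sigma$-folding, within distance $r$ of $o$ in $M$, since the projection $D(M)\to M$ is $1$-Lipschitz for the intrinsic distances once $o\in\inte\,M$; there is a minor subtlety that the folded path may hit $\partial M$, but the folding map is $1$-Lipschitz regardless). Similarly $\mathrm{vol}(\partial B^{D(M)}_r(o)) \le 2\,\mathrm{vol}(\partial B^M_r(o))$ for a.e.\ $r$. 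Hence condition (1) (resp.\ (2)) for $M$ forces the corresponding non-integrability for $D(M)$, so by the classical Grigor'yan--Karp volume tests (see \cite{Grigoryan_Analytic,Li}) $D(M)$ is parabolic. Therefore $\tilde u$ is constant, hence $u$ is constant, which is $\mathcal{N}$-parabolicity of $M$.

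\textbf{Finishing.} Once $(M,g)$ is $\mathcal{N}$-parabolic, it is $\mathcal{D}$-parabolic by the remark following Corollary~\ref{cor_IPS}: given bounded harmonic $u$ on $\inte\,M$ with $u=0$ on $\partial M$, apply the corollary to $\pm u$ to get $\sup_M u = \sup_{\partial M} u = 0$ and $\inf_M u = \inf_{\partial M}u = 0$, so $u\equiv 0$. Alternatively, one may avoid the doubling and argue directly on $M$ by the Karp-type cutoff test: choose the logarithmic/linear cutoff $\rho$ supported in $B^M_r(o)$, vanishing on $\partial B^M_r(o)$ but \emph{not} required to vanish on $\partial_1$-type boundary, multiply $\Delta u\ge 0$ by $(u-\sup_{\partial M}u)^+\rho^2$ (or the appropriate truncation), integrate by parts, and use that the boundary integral over $\partial M$ has the favorable sign thanks to $\partial u/\partial\nu\le 0$; the divergence of the weighted volume integral in (1) or (2) then forces $\nabla u\equiv 0$ on larger and larger balls. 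I would present the doubling argument as the main line since it reduces to a black-box theorem, and mention the direct cutoff argument as a parenthetical remark.

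\textbf{Main obstacle.} The delicate point is the regularity of $D(M)$ along the gluing locus $\partial M$ and the precise justification that even reflection of a \emph{weak} Neumann subsolution yields a weak subsolution across $\partial M$ — this requires being careful that the class $C^0(M)\cap W^{1,2}_{loc}(\inte\,M)$ glues to $W^{1,2}_{loc}(D(M))$ (it does, since the trace matches continuously and the gradients match in $L^2$), and that the Neumann inequality is tested against the full space of compactly supported test functions on the double. The volume comparison is essentially a triangle-inequality bookkeeping exercise and should not cause trouble. If one prefers to sidestep the double entirely, the cutoff proof is self-contained but requires reproving a Karp-type lemma in the boundary setting; since \cite{Grigoryan_laplace_eq} already contains the statement, I would simply cite it and give the doubling reduction as the conceptual explanation.
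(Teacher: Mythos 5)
The paper does not actually prove Proposition \ref{vol-growth-D-parabolicity}: it is quoted from Grigor'yan \cite{Grigoryan_laplace_eq}, with \cite[Theorem 4.6, Remark 4.8]{IPS_Crelle} indicated for a PDE proof, so your reduction must stand on its own. Its weak point is the area comparison you need for case (1). The folding map $D(M)\to M$ is indeed $1$-Lipschitz, and this does give $\mathrm{vol}(B^{D(M)}_r(o))\le 2\,\mathrm{vol}(B^M_r(o))$ (no radius adjustment needed), so hypothesis (2) transfers to the double and that half of your argument is sound, modulo the regularity caveats you yourself raise. But the sphere estimate $\mathrm{vol}(\partial B^{D(M)}_r(o))\le 2\,\mathrm{vol}(\partial B^M_r(o))$ does \emph{not} follow ``similarly'': the folding maps the sphere $\partial B^{D(M)}_r(o)$ only into the closed ball $\bar B^M_r(o)$, not into $\partial B^M_r(o)$ --- a point of the reflected copy at $D(M)$-distance exactly $r$ from $o$ typically folds to a point at strictly smaller $M$-distance (already visible in the flat half-space, where reflection across $\partial M$ decreases the distance to $o\in\inte\, M$). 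Nor can you differentiate the ball comparison: $V^{D}(r)\le 2V^{M}(r)$ for all $r$ gives no a.e.\ inequality between the derivatives. Since (1) is the weaker hypothesis (the paper's remark only gives (2)$\Rightarrow$(1)), this is a genuine gap in your main line, not a technicality.

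The repair is essentially your own parenthetical remark: run the Grigor'yan--Karp capacity/test-function proof directly on $M$. The test functions are functions of $r=d(\cdot,o)$ alone, they are admissible in the weak Neumann formulation \eqref{Neumann_parabolicity2} precisely because they are not required to vanish on $\partial M$, and no boundary term with the wrong sign appears; this is exactly the PDE proof of \cite[Theorem 4.6]{IPS_Crelle} that the paper cites, so nothing new needs to be reproved. One can also show that $M$ is $\mathcal{N}$-parabolic if and only if $D(M)$ is parabolic by comparing Neumann capacities in $M$ with capacities in the double (even extension of equilibrium potentials doubles the energy), but to exploit the double under hypothesis (1) you would still need the sphere condition there, so the doubling buys nothing in that case. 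Two smaller points: $\partial M$ need not be compact, so ``smoothing the metric near $\partial M$'' has to be done uniformly (e.g.\ keeping the smoothed metric uniformly bi-Lipschitz to the doubled one) for the volume conditions to survive; and the even reflection of a weak Neumann subsolution needs $u\in W^{1,2}_{loc}$ up to $\partial M$, which is implicit in \eqref{Neumann_parabolicity2} but should be said. Your final step, that $\mathcal{N}$-parabolicity implies $\mathcal{D}$-parabolicity via Corollary \ref{cor_IPS}, is correct and is how the paper argues.
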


\begin{remark}\rm
As a consequence of the co-area formula the volume growth condition (2) implies the area growth
condition (1).
\end{remark}
In view of applications it is worth to remark some important equivalent forms of $\D$-parabolicity. We begin with a useful characterization described in \cite{Perez-Lopez_MRSI} and that in a germinal form can be traced back to \cite{Grigoryan_Analytic} (see Remark \ref{nonmassiveness_grigor} below). If  $\{\Omega_k\}$ is an exhaustion of $M$ by an increasing sequence of relatively compact open sets $\Omega_k \Subset \Omega_{k+1}$ with $\partial_0\Omega_k\ne\emptyset$ transversal to $\partial M$, we denote by  $v_k$ the solution of the Dirichlet problem
\begin{equation}\label{D-parab-exhaustion}
\left\lbrace
\begin{array}{rl}
\Delta v_k = 0 & \text{ in}\ \  \inte\, \Omega_{k}\\
v_k=0& \text{ on} \ \ \partial_1 \Omega_k \\
v_k=1& \text{ on} \ \ \partial_0 \Omega_k.
\end{array}\right.
\end{equation}
Note that the solution $v_k$ which may be obtained by a standard application of the Perron procedure, satisfies $0\leq v_k\leq 1$ and is smooth at all points except at ${\partial_1 \Omega_k} \cap \overline{\partial_0 \Omega_k}$. Since $v_{k+1}$ is continuous on $\bar \Omega_k$, by  the usual weak maximum principle, the sequence $\{v_k\}$ is decreasing.

\begin{proposition}[\cite{Grigoryan_Analytic, Perez-Lopez_MRSI}]
\label{DirPar_equiv}
Let $M$ be a Riemannian manifold with nonempty boundary $\partial M$. The following are equivalent:
\begin{enumerate}
\item[(i)] $M$ is $\mathcal{D}$-parabolic;
\item[(ii)] For every increasing exhaustion $\Omega_k$ with the properties described above, the solution $v_k$ of the Dirichlet problem \eqref{D-parab-exhaustion} satisfies $v_k\to 0$ on $M$ as $k\to +\infty$;
\item[(iii)] There exists an increasing exhaustion $\Omega_k$ with the properties described above for which $v_k\to 0 $ as $k\to +\infty$.
\end{enumerate}
\end{proposition}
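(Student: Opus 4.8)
The plan is to prove the cycle of implications $(i)\Rightarrow(ii)\Rightarrow(iii)\Rightarrow(i)$. The implication $(ii)\Rightarrow(iii)$ amounts only to observing that exhaustions with the stated properties exist at all; one may take, for instance, sublevel sets of a smooth proper function on $M$, slightly perturbed near $\partial M$ so that the Dirichlet boundary $\partial_0\Omega_k$ is regular and meets $\partial M$ transversally.

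For $(iii)\Rightarrow(i)$, I would argue by comparison. Let $\{\Omega_k\}$ be an exhaustion with $v_k\to 0$, and let $u\in C^\infty(\inte\, M)\cap C^0(M)$ be a bounded solution of \eqref{D-parabolicity-def}, say $|u|\le C$. The claim is that $|u|\le C\,v_k$ on $\Omega_k$ for every $k$: indeed $C v_k\pm u$ is harmonic in $\inte\,\Omega_k$, bounded, nonnegative on $\partial_0\Omega_k$ (where $v_k\equiv 1$ and $|u|\le C$), and identically $0$ on $\partial_1\Omega_k\subset\partial M$ (where both $v_k$ and $u$ vanish); since the corner set $\partial_1\Omega_k\cap\overline{\partial_0\Omega_k}$ is negligible for bounded harmonic functions, the weak maximum principle yields $C v_k\pm u\ge 0$ on $\Omega_k$. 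Letting $k\to+\infty$ and using $v_k\to 0$ pointwise gives $u\equiv 0$, i.e. $M$ is $\D$-parabolic.

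For $(i)\Rightarrow(ii)$, fix any exhaustion $\{\Omega_k\}$ as in the statement. Since $\{v_k\}$ is decreasing with $0\le v_k\le 1$, it converges pointwise to some $v_\infty$, $0\le v_\infty\le 1$; by Harnack's convergence theorem the convergence is locally uniform in $\inte\, M$ and $v_\infty$ is harmonic (hence smooth) there. The crucial step is to promote this to $v_\infty\in C^0(M)$ with $v_\infty=0$ on $\partial M$. For this, note that every $p\in\partial M$ admits a fixed half-ball neighborhood $W$ in which, for all large $k$, the function $v_k$ solves the homogeneous Dirichlet problem $\Delta v_k=0$ in $W\cap\inte\, M$, $v_k=0$ on $W\cap\partial M$; boundary Schauder (or De Giorgi--Nash--Moser) estimates, uniform in $k$ because $\sup_M|v_k|\le 1$, then yield equicontinuity of $\{v_k\}$ up to $W\cap\partial M$. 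Hence $v_\infty$ is continuous on $M$ and vanishes on $\partial M$, and being bounded it is forced to vanish identically by $\D$-parabolicity, which is $(ii)$.

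The step I expect to be the main obstacle is the boundary regularity needed in $(i)\Rightarrow(ii)$: one must control the normalised solutions $v_k$ up to $\partial M$, away from the corners, uniformly in $k$, and correctly dispose of the corner set $\partial_1\Omega_k\cap\overline{\partial_0\Omega_k}$ both there and in the comparison argument of $(iii)\Rightarrow(i)$. To this end I would rely on the boundary point lemma and the strong maximum principle for weak (sub)solutions on manifolds with boundary recorded in Appendix \ref{appendix}, which also make rigorous the removability of the corner set.
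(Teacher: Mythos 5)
Your proposal is correct and follows essentially the same route as the paper: the cycle of implications with the comparison $|u|\le C v_k$ on $\Omega_k$ for $(iii)\Rightarrow(i)$, and the monotone limit of the $v_k$ producing a bounded harmonic function vanishing on $\partial M$ for $(i)\Rightarrow(ii)$. Your extra care with boundary Schauder estimates and the corner set $\partial_1\Omega_k\cap\overline{\partial_0\Omega_k}$ merely fills in details that the paper's proof states tersely.
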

\begin{proof}
Assume first that $M$ is $\mathcal{D}$-parabolic and let $\Omega_k$ be any increasing exhaustion of $M$ as in the statement and let $v_k$ be the solution of the corresponding Dirichlet problem  \eqref{D-parab-exhaustion}. By monotonicity the sequence $v_k$ converges locally uniformly on $M$ to a bounded solution $v$ of \eqref{D-parabolicity-def}, and therefore $v=0$ by definition of $\D$-parabolicity, showing that $(i)$ implies $(ii)$.

It is clear that (ii) implies (iii). To prove that (iii) implies (i), assume that there exists an exhaustion $\Omega_k$ such that the corresponding sequence $\{v_k\}$ tends to zero as $k\to +\infty$. Let $u\in C^2(\inte\, M) \cap C^0(M)$  be bounded and satisfy $\Delta u=0 $ in $\inte\, M$, $u=0$ on $\partial M$. By scaling we may assume without loss of generality that $|u|\leq 1$ on $M$. Since  $u\leq  v_k$ on $\partial \Omega_k$, by the weak maximum principle $u\leq  v_k$ on $\Omega_k$ and letting $k\to +\infty$, $u\leq 0$. Applying the argument to $-u$ we conclude that $u=0$ and $M$ is $\mathcal{D}$-parabolic.
\end{proof}

\begin{remark}\label{nonmassiveness_grigor}\rm
We observe that when $M$ is a smooth domain of a smooth manifold $N$ without boundary, the characterization given in Proposition \ref{DirPar_equiv} coincides with the notion of nonmassiveness of $M$ presented in \cite{Grigoryan_Analytic}, that is, the property that $M$ does not support a nonnegative, bounded subharmonic function such that $v = 0$ on $N \backslash \inte\,M$ and $\sup_{M}v > 0$. In fact, \cite[Proposition 4.3]{Grigoryan_Analytic} shows that the harmonic measure of the complementary set of $M$ is the limit of the solutions to problems \eqref{D-parab-exhaustion}, and it is equal to $0$ if and only if $M$ is nonmassive. In the case of manifolds without boundary the existence of a proper massive set turns out to be equivalent to the nonparabolicity of the manifold (\cite[Theorem 5.1]{Grigoryan_Analytic}). As we will see in the next subsection, Lemma \ref{lemma_extension}, every Riemannian manifold with nonempty boundary can be considered as a domain in a Riemannian manifold without boundary, however this extension is nonparabolic.
\end{remark}

Proposition \ref{DirPar_equiv} implies a version of the Ahlfors maximum principle.

\begin{proposition}
\label{Dirichlet_Ahlfors}
Let $(M,g)$ be a Riemannian manifold with nonempty boundary $\partial M$. The following are equivalent:
\begin{itemize}
\item[(i)] $M$ is $\mathcal{D}$-parabolic;
\item[(ii)] for every bounded function $u\in C^{\infty}(\mathrm{int}\,M)\cap C^0(M)$ such that $\Delta u=0$ in $\inte\, M$
we have
\begin{equation*}
\sup_M u = \sup_{\partial M} u \quad \text{and}\quad \inf_M u = \inf_{\partial M} u;
\end{equation*}
\item[(iii)] for every domain $\Omega$ contained in $M$ and every bounded function $u\in C^{\infty}(\mathrm{int}\, \Omega) \cap C^0(\bar \Omega)$ satisfying $\Delta u =0$ in $\mathrm{int}\,\Omega$ we have
\begin{equation*}
\sup_\Omega u= \sup_{\partial \Omega} u \quad \text{and}\quad \inf_\Omega u = \inf_{\partial \Omega} u.
\end{equation*}
\end{itemize}
\end{proposition}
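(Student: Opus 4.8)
\emph{Proof strategy.} The plan is to prove the cycle $(iii)\Rightarrow(ii)\Rightarrow(i)\Rightarrow(iii)$. The implication $(iii)\Rightarrow(ii)$ is immediate on taking $\Omega=M$, for which $\partial\Omega=\partial M$ in the notation introduced above. For $(ii)\Rightarrow(i)$: if $u$ is bounded, harmonic in $\inte M$ and $u=0$ on $\partial M$, then $(ii)$ forces $\sup_M u=\sup_{\partial M}u=0$ and $\inf_M u=\inf_{\partial M}u=0$, hence $u\equiv0$, which is precisely the definition of $\mathcal{D}$-parabolicity. All the content is therefore in $(i)\Rightarrow(iii)$, which I would obtain from Proposition~\ref{DirPar_equiv} by a barrier/comparison argument using the subsolutions $v_k$ of \eqref{D-parab-exhaustion}.

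\emph{The main implication.} Let $\Omega\subseteq M$ be a domain and $u\in C^\infty(\inte\Omega)\cap C^0(\bar\Omega)$ bounded with $\Delta u=0$ in $\inte\Omega$; it is enough to treat the supremum, the infimum following by applying the result to $-u$. Since $\partial\Omega=\partial(\inte\Omega)$ consists of limits of points of $\inte\Omega$ and $u$ is continuous on $\bar\Omega$, the inequality $\sup_{\partial\Omega}u\le\sup_\Omega u$ holds for free. For the reverse inequality I would argue by contradiction: if $\sup_\Omega u>\sup_{\partial\Omega}u$, then after an affine normalization (legitimate since $\partial\Omega\ne\emptyset$, $M$ being connected with nonempty boundary) we may assume $\sup_{\partial\Omega}u=0<1=\sup_\Omega u$, and we fix $C\ge1$ with $|u|\le C$ on $\bar\Omega$. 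Fix an exhaustion $\{\Omega_k\}$ of $M$ by relatively compact open sets with $\partial_0\Omega_k\ne\emptyset$ transversal to $\partial M$; since $M$ is $\mathcal{D}$-parabolic, the corresponding solutions $v_k$ of \eqref{D-parab-exhaustion} satisfy $0\le v_k\le1$ and, by Proposition~\ref{DirPar_equiv}, $v_k\to0$ pointwise on $M$. On the relatively compact open set $U_k:=\inte\Omega\cap\inte\Omega_k\subseteq\inte M$ the function $w_k:=u-Cv_k$ is harmonic, in particular subharmonic; the claim is that
\[
\limsup_{U_k\ni q\to p}w_k(q)\le0\qquad\text{for every }p\in\partial U_k,
\]
so that the weak maximum principle on the precompact set $U_k$ yields $u\le Cv_k$ throughout $U_k$. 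Letting $k\to\infty$ at a fixed $x\in\inte\Omega$ then gives $u(x)\le C\lim_k v_k(x)=0$, whence $\sup_\Omega u\le0$ (using continuity up to $\bar\Omega$), contradicting $\sup_\Omega u=1$.

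\emph{Boundary analysis and the main obstacle.} The crux is the estimate on $\partial U_k$. A point $p\in\partial U_k$ lies in $\bar\Omega\cap\bar\Omega_k$; if $p\in\partial_0\Omega_k$, then $v_k$ is continuous at $p$ with value $1$ and $u(p)\le C$, so $\limsup w_k(q)\le u(p)-C\le0$, while if $p\notin\partial_0\Omega_k$ then $p$ lies in $\partial_0\Omega$ or in $\partial_1\Omega\subseteq\partial M$, where in both cases $u(p)\le\sup_{\partial\Omega}u=0$, and since $v_k\ge0$ one obtains $\limsup w_k(q)\le\limsup u(q)-C\liminf v_k(q)\le0$. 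Two features make this work and constitute, in my view, the real difficulty rather than the maximum-principle bookkeeping: first, no Neumann-type condition is assumed on $u$ along $\partial_1\Omega$, so the only admissible control on the Neumann portion of the boundary is precisely the global datum $\sup_{\partial\Omega}u$ that already appears in the statement (this is why the result must be phrased with $\sup_{\partial\Omega}u$ and $\inf_{\partial\Omega}u$, not with a purely ``interior'' version); second, $v_k$ may fail to be continuous along the corner set $\partial_1\Omega_k\cap\overline{\partial_0\Omega_k}\subseteq\partial M$, but this is harmless exactly because $v_k$ stays nonnegative there, so the barrier $-Cv_k$ can only help. Carrying out these two verifications carefully — and checking that $U_k$ is a genuine precompact open subset of the boundaryless manifold $\inte M$, so that the elementary weak maximum principle for continuous weak subsolutions applies with no regularity hypothesis on $\partial U_k$ — is the part of the proof that demands attention.
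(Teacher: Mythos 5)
Your proof is correct and follows essentially the same route as the paper: the implications $(iii)\Rightarrow(ii)\Rightarrow(i)$ are dismissed as immediate, and $(i)\Rightarrow(iii)$ is obtained by comparing $u$ (suitably normalized) with the exhaustion solutions $v_k$ of \eqref{D-parab-exhaustion} on $\Omega\cap\Omega_k$ via the weak maximum principle and letting $k\to+\infty$, exactly as in Proposition \ref{Dirichlet_Ahlfors}'s proof. Your boundary case analysis (corners of $\partial_1\Omega_k\cap\overline{\partial_0\Omega_k}$, the constant $C$ in the barrier $Cv_k$) is in fact a slightly more careful write-up of the same comparison argument.
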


\begin{proof}
It is trivial that (iii) implies (ii) which in turn implies (i).

It remains to prove that (i) implies (iii). Suppose  that $M$ is $\mathcal{D}$-parabolic,  let $\Omega$ be a domain in $M$ and  let $u\in C^{\infty}(\mathrm{int}\, \Omega)\cap C^0(\bar \Omega)$ be bounded and satisfy $\Delta u=0$ in $\mathrm{int}\, \Omega$. By scaling we may assume without loss of generality that $|u|\leq 1$ on $M$.

Consider $\tilde u= u-\sup_{\partial \Omega} u$, so that $\tilde u\leq 0$ on $\partial \Omega$ and $\Delta \tilde u= 0$ in $\mathrm{int}\, \Omega$. Let $\{\Omega_k\}$ be an increasing exhaustion of $M$ and $v_k$ be the sequence of functions described in Proposition \ref{DirPar_equiv}. Since $\tilde u\leq v_k$ on $\partial (\Omega\cap\Omega_k)$, by the weak maximum principle we have $\tilde u \leq v_k$ on $\Omega\cap \Omega_k,$ and letting $k\to +\infty$, $\tilde u\leq 0$ on $\Omega$, that is, $u\leq \sup_{\partial \Omega} u$. The
inequality $u \geq \inf_{\partial \Omega} u$ is obtained applying the same argument to $-u$.
\end{proof}

Perhaps surprisingly, the Dirichlet parabolicity implies a stronger version of the Ahlfors' maximum principle, which involves subharmonic functions and elucidates the ultimate difference between Neumann and Dirichlet parabolicity. The latter deals with bounded subharmonic functions, whose supremum is attained on $\partial \Omega = \partial_0\Omega \cup \partial_1 \Omega$ while in the former one considers bounded above subharmonic functions, which attain their supremum on $\partial_0\Omega$.

\begin{proposition}
\label{DirParSubharmonic}
Let $M$ be a manifold with boundary $\partial M \not= \emptyset$. Then the following are equivalent:
\begin{itemize}
\item[(i)] $M$ is $\mathcal D$-parabolic;
\item[(ii)] for every domain $\Omega\subset M$ and every bounded function $u \in C^{0}(\Omega)\cap W^{1,2}_{loc}(\inte\, \Omega)$ satisfying
$\Delta u \geq 0$ on  $\mathrm{int}\, \Omega,$ it holds
\[
\sup_{\Omega } u = \sup_{\partial \Omega} u;
\]
\item[(iii)]
For every bounded function $u \in C^{0}(M)\cap W^{1,2}_{loc}(\inte\,M)$ satisfying $\Delta u\geq 0$ on $\intM$ it holds
\[
\sup_M u= \sup_{\partial M} u.
\]
\end{itemize}
\end{proposition}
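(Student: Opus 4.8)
The plan is to close the circle of implications $(ii)\Rightarrow(iii)\Rightarrow(i)\Rightarrow(ii)$. The first two are essentially immediate: $(ii)\Rightarrow(iii)$ is the case $\Omega=M$, and for $(iii)\Rightarrow(i)$ one applies $(iii)$ to a bounded harmonic function $u$ vanishing on $\partial M$ and then to $-u$ (which is subharmonic as well), obtaining $\sup_M u=\sup_{\partial M}u=0$ and $\sup_M(-u)=\sup_{\partial M}(-u)=0$, whence $u\equiv0$. Thus the whole content of the proposition is the implication $(i)\Rightarrow(ii)$.

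The engine for $(i)\Rightarrow(ii)$ will be the following maximum principle, obtained by comparison with the functions $v_k$ of Proposition~\ref{DirPar_equiv}: \emph{if $M$ is $\D$-parabolic, then every bounded $w\in C^0(M)\cap W^{1,2}_{loc}(\inte\, M)$ with $\Delta w\ge0$ on $\inte\, M$ and $w\le0$ on $\partial M$ satisfies $w\le0$ on $M$.} To prove it, fix an exhaustion $\{\Omega_k\}$ with the transversality properties of Proposition~\ref{DirPar_equiv}; since $M$ is $\D$-parabolic, $v_k\to0$ pointwise on $M$. Put $S=\sup_M w$ and assume $S>0$ (otherwise there is nothing to prove). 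The function $w-Sv_k$ is a weak subsolution on $\inte\,\Omega_k$, where $v_k$ is harmonic (its only non-smooth points lie on $\partial_1\Omega_k\subset\partial M$, hence outside $\inte\,\Omega_k$); on $\partial_0\Omega_k$ one has $v_k=1$ and $w\le S$, and on $\partial_1\Omega_k\subset\partial M$ one has $v_k=0$ and $w\le0$, so $w-Sv_k\le0$ on $\partial\Omega_k=\partial_0\Omega_k\cup\partial_1\Omega_k$. The weak maximum principle on $\Omega_k$ then yields $w\le Sv_k$ on $\Omega_k$, and letting $k\to\infty$ gives $w\le0$. Applied to $w=u-\sup_{\partial M}u$, this already proves $(i)\Rightarrow(iii)$.

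To reach arbitrary domains and complete $(i)\Rightarrow(ii)$, I would reduce to the lemma by extension by zero. Given a domain $\Omega\subset M$ and a bounded $u\in C^0(\Omega)\cap W^{1,2}_{loc}(\inte\,\Omega)$ with $\Delta u\ge0$ on $\inte\,\Omega$, set $L=\sup_{\partial\Omega}u$; since $\sup_{\partial\Omega}u\le\sup_\Omega u$ is automatic, only the reverse inequality needs to be established. Define $w=\max\{u-L,0\}$ on $\Omega$ and $w=0$ on $M\setminus\Omega$. Because $u-L\le0$ along $\partial_0\Omega$, the function $\max\{u-L,0\}$ vanishes continuously along $\partial_0\Omega$ and glues with the identically zero function, so $w\in C^0(M)\cap W^{1,2}_{loc}(\inte\, M)$; it is subharmonic on $\inte\, M$ since it is subharmonic on $\inte\,\Omega$ and on $\inte\, M\setminus\bar\Omega$, while at the gluing points of $\partial_0\Omega$ the sub-mean value inequality is trivial, $w$ taking there its minimum value $0$. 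Moreover $w\equiv0$ on $\partial M$: on $\partial M\setminus\bar\Omega$ by definition, and on $\partial_1\Omega=\bar\Omega\cap\partial M\subset\partial\Omega$ because $u\le L$ there. The lemma now forces $w\le0$, hence $w\equiv0$, i.e. $u\le L$ on $\Omega$, which is exactly $\sup_\Omega u\le\sup_{\partial\Omega}u$.

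The delicate point, on which I would spend most of the care, is the weak maximum principle invoked on the exhausting domains $\Omega_k$: these are not smooth manifolds with boundary but carry a corner along $\partial_1\Omega_k\cap\overline{\partial_0\Omega_k}$, where $v_k$ is only known to be bounded, and they contain portions of $\partial M$ in their interior. One must argue that a bounded weak subsolution on $\inte\,\Omega_k$, continuous up to $\bar\Omega_k$ away from that corner, attains its supremum over $\bar\Omega_k$ either on $\partial_0\Omega_k$ or on $\bar\Omega_k\cap\partial M=\partial_1\Omega_k$ — using the strong maximum principle for weak subsolutions recorded in the Appendix to exclude interior maxima in $\inte\, M$ — and that on both sets the comparison function is non-positive, the key coincidence being that $\partial M\cap\bar\Omega_k=\partial_1\Omega_k$ is precisely where $v_k$ vanishes and where the hypothesis on $w$ applies; at the corner only $0\le v_k\le1$ is needed to control the relevant $\limsup$. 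All of this is built into the setting of Proposition~\ref{DirPar_equiv}, so the argument stays short in practice.
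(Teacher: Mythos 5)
Your overall scheme is sound and, for the core step, genuinely different from the paper's. The paper proves (i)$\Rightarrow$(iii) by contradiction: it normalizes $\tilde u=\max\{u-\mu,0\}/(\sup_M|u|+|\mu|)$ and invokes a Perron/reduction construction of a bounded harmonic majorant $v\ge\tilde u$ with $v=0$ on $\partial M$ (whose vanishing on $\partial M$ needs barriers), which then contradicts $\D$-parabolicity; the passage between (iii) and arbitrary domains in (ii) is done by the same extension-by-zero device you use, but with an $\epsilon$-margin. Your replacement of the harmonic-majorant step by the direct comparison $w\le S\,v_k$ against the exhaustion solutions of Proposition~\ref{DirPar_equiv} is correct and arguably more self-contained: it avoids the Perron/reduction machinery and the barrier argument, at the price of a maximum principle on $\Omega_k$ for a subsolution that is discontinuous at the corner $\partial_1\Omega_k\cap\overline{\partial_0\Omega_k}$, which you correctly reduce to a $\limsup$ estimate there (using $w\le 0$ on $\partial M$ and $0\le v_k\le 1$); this is the same kind of care the paper exercises tacitly when it compares with $v_k$ in Propositions~\ref{DirPar_equiv} and~\ref{Dirichlet_Ahlfors}.

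The one step whose justification is insufficient as written is the gluing in (i)$\Rightarrow$(ii). You set $w=\max\{u-L,0\}$ on $\Omega$ and $w=0$ outside, and assert $w\in C^0(M)\cap W^{1,2}_{loc}(\inte\,M)$ with $\Delta w\ge 0$ weakly on $\inte\,M$ because ``the sub-mean value inequality is trivial at the gluing points''. Continuity is fine, but since $L=\sup_{\partial\Omega}u$ carries no margin, the set $\{u>L\}$ may accumulate on $\partial_0\Omega$; moreover $\partial_0\Omega$ is an arbitrary closed interface and $u$ is only $W^{1,2}_{loc}$ in $\inte\,\Omega$, so neither the Sobolev regularity of $w$ across $\partial_0\Omega$ nor the test-function inequality that your key lemma needs follows from a pointwise sub-mean-value remark (it is true, but proving it amounts to a Caccioppoli-type limiting argument that re-introduces a truncation). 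The clean fix is exactly the paper's device in its proof of (ii)$\Leftrightarrow$(iii): work with $w_\epsilon=\max\{u-L-\epsilon,0\}$ extended by zero, which vanishes on a whole neighborhood of $\partial_0\Omega$ and is therefore trivially in $C^0(M)\cap W^{1,2}_{loc}(\inte\,M)$ and weakly subharmonic on $\inte\,M$; your lemma gives $w_\epsilon\le 0$, i.e. $u\le L+\epsilon$ on $\Omega$, and letting $\epsilon\to 0$ concludes. Alternatively, skip the gluing altogether and compare $u-L$ with $S\,v_k$ directly on $\Omega\cap\Omega_k$, as the paper does for harmonic $u$ in Proposition~\ref{Dirichlet_Ahlfors}.
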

\begin{proof}
(i) $\Leftrightarrow$ (iii). Since, by Proposition~\ref{Dirichlet_Ahlfors}, (iii) implies (i) we have only to consider the reverse implication. Suppose by contradiction that $u$ is a bounded function satisfying $\Delta u\geq 0$ on $\inte\,M$ and $\sup_M u> \sup_{\partial M} u = \mu$.
Define
\[
\tilde u (x) = \frac{\max\{u(x) - \mu,0\}}{\sup_{M} |u| +|\mu|}
\]
and observe that $0 \leq \tilde u \in C^{0}(M) \cap W^{1,2}_{loc}(\inte\,M)\cap L^{\infty}(M)$  satisfies
\[
\Delta \tilde u\geq 0 \text{ on }\mathrm{int}\, M, \quad \tilde u= 0 \text{ on }\partial M,\quad  0 < \sup_{M} \tilde u \leq 1.
\]
Starting from $\tilde u$ it is now standard to construct a harmonic function $v \in C^{0}(M) \cap C^{\infty}(\inte\,M) \cap L^{\infty}(M)$ such that
\[
v \geq \tilde u \text{ on }M,\quad  v =0 \text{ on }\partial M.
\]
This follows  either via the exhaustion procedure in \cite[p. 157]{Grigoryan_Analytic} or by applying to $- \tilde u$ the reduction technique described in \cite[p. 132 ff, Theorem 4.3.2]{H}. In the latter case, the vanishing of $v$ on $\partial M$ is proved by constructing for every point of $\partial M$ a global barrier  larger than $\tilde u$. Since
\[
\sup_{M} v  > 0 = \sup_{\partial M} v
\]
we get the desired contradiction.\\
(ii) $\Leftrightarrow$ (iii). It is clear that (ii) implies (iii). On the other hand, suppose that (ii) is not satisfied. Then, there exist a domain $\Omega$ in $M$ and a function $u\in C^{0}(\Omega) \cap W^{1,2}_{loc}(\inte\,\Omega) \cap L^{\infty}(\Omega)$ satisfying $\Delta u\geq 0$ in $\mathrm{int}\, \Omega$ and $\sup_\Omega u> \sup_{\partial \Omega} u = \mu$.  If $0 < \epsilon \ll 1$ is small enough, we can construct a function $u_{\epsilon} \in C^{0}(M) \cap W^{1,2}_{loc}(\inte\,M) \cap L^{\infty}(M)$ by setting
\[
 u_{\epsilon} (x) =
\begin{cases}
 \max\{u(x) - \mu - \epsilon,0\} & \text{in} \ \ \Omega \\
 0 & \text{in} \ \ M \backslash \Omega,
\end{cases}
\]
in such a way that the following conditions are satisfied:
\[
\Delta  u_{\epsilon} \geq 0 \text{ in } \inte\,M, \quad u_{\epsilon} =0 \text{ on } \partial M,\quad  \sup_{M}  u_{\epsilon} >0.
\]
This shows that (iii) is not satisfied.
\end{proof}

As a consequence of the above propositions, we have
\begin{corollary}
\label{D-parabolicity of subdomains} Let $M$ be a manifold with boundary and let $\Omega$  be a smooth domain in $M$. If $M$ is $\mathcal{D}$-parabolic then $M\backslash \Omega$ is $\mathcal{D}$-parabolic\footnote{in the extended sense.}. If $\Omega$ is relatively compact, and $M\backslash \Omega$ is $\mathcal{D}$-parabolic, then $M$ is $\mathcal{D}$-parabolic.
\end{corollary}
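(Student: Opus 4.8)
The plan is to prove the two implications separately, using the subharmonic Ahlfors maximum principle of Proposition~\ref{DirParSubharmonic} throughout, since both directions are naturally phrased in terms of bounded subharmonic functions rather than harmonic ones.

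\medskip

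\noindent\emph{$M$ $\D$-parabolic $\Rightarrow M\setminus\Omega$ $\D$-parabolic.} First I would fix, as permitted by the extended sense of $\D$-parabolicity alluded to in the Remark after the definition, the closed domain $\bar N := M\setminus\Omega$ with its boundary $\partial N = \partial_0 N \cup \partial_1 N$, where $\partial_0 N = \partial\Omega\cap\inte\,M$ and $\partial_1 N \subset \partial M$. Let $u\in C^0(\bar N)\cap W^{1,2}_{loc}(\inte\,N)$ be bounded with $\Delta u\geq 0$ on $\inte\,N$; I want to show $\sup_{\bar N} u = \sup_{\partial N} u$ and then deduce $\D$-parabolicity of $\bar N$ from the analogue of Proposition~\ref{DirParSubharmonic} for the closed domain. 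The key step is to extend $u$ to a subharmonic competitor on all of $M$: set $\mu := \sup_{\partial_0 N} u$ (the relevant part of the boundary) and define on $M$ the function that equals $\max\{u-\mu,0\}$ on $\bar N$ and $0$ on $\bar\Omega$. Because these two prescriptions agree on the overlap $\partial_0 N$ and each piece is a bounded subsolution, the glued function $w$ lies in $C^0(M)\cap W^{1,2}_{loc}(\inte\,M)\cap L^\infty(M)$, satisfies $\Delta w\geq 0$ in $\inte\,M$ and $w=0$ on $\partial M$. By Proposition~\ref{DirParSubharmonic}(iii) applied to $M$, $\sup_M w = \sup_{\partial M} w = 0$, i.e. $w\equiv 0$, which forces $u\leq\mu$ on $\bar N$; applying the same to $-u$ (noting $-u$ need not be subharmonic — so instead I would run the argument for $\sup$ only, which is all that is needed) gives exactly $\sup_{\bar N} u = \sup_{\partial_0 N} u = \sup_{\partial N} u$ once one checks the Neumann boundary $\partial_1 N$ does not affect the supremum of a bounded subharmonic function, precisely the content of the extended Proposition~\ref{DirParSubharmonic}. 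Hence $\bar N$ is $\D$-parabolic.

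\medskip

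\noindent\emph{$\Omega$ relatively compact and $M\setminus\Omega$ $\D$-parabolic $\Rightarrow M$ $\D$-parabolic.} Here I would use the exhaustion characterization, Proposition~\ref{DirPar_equiv}. Let $u\in C^2(\inte\,M)\cap C^0(M)$ be bounded, $\Delta u=0$ in $\inte\,M$, $u=0$ on $\partial M$, normalized so $|u|\leq 1$. Restricting $u$ to $\bar N = M\setminus\Omega$ gives a bounded harmonic function vanishing on $\partial_1 N\subset\partial M$; by $\D$-parabolicity of $\bar N$ in the form of the subharmonic Ahlfors principle (Proposition~\ref{DirParSubharmonic}(ii), applied with domain $\inte\,N$), we get $\sup_{\bar N} u = \sup_{\partial N} u = \sup_{\partial_0 N} u$ and likewise $\inf_{\bar N} u = \inf_{\partial_0 N} u$, since on $\partial_1 N$ we have $u=0$ which is caught inside these bounds unless $u$ is everywhere of one sign — a case handled trivially. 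Therefore $|u|\leq \max_{\partial\Omega}|u|$ on all of $M\setminus\Omega$, hence on all of $M$. Now let $M_0 := \max_{\bar\Omega}|u|$, attained since $\bar\Omega$ is compact; if this maximum is attained in $\inte\,M$ the strong maximum principle makes $u$ constant, and as $u=0$ on $\partial M\neq\emptyset$ we conclude $u\equiv 0$. If it is attained on $\partial\Omega\cup(\bar\Omega\cap\partial M)$, the previous paragraph shows $M_0 = \max_{\partial\Omega}|u| \leq M_0$ with the bound propagating outward, so the sup of $|u|$ over $M$ equals its sup over $\partial\Omega$, a relatively compact set; a standard limiting/strong-maximum argument (or directly: $u$ attains its global max somewhere in the compact set $\bar\Omega$, hence in $\inte\,M$ by the boundary condition unless it vanishes) forces $u\equiv 0$.

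\medskip

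\noindent The main obstacle I anticipate is the gluing step in the first implication: one must verify that the piecewise-defined function $w$ is genuinely a weak subsolution across the interface $\partial_0 N$, which is a standard but slightly delicate fact — the maximum of two $W^{1,2}_{loc}$ subsolutions is again a subsolution, and extending a subsolution by $0$ across a level set where it already vanishes preserves the property. A secondary technical point is making precise the ``extended sense'' of $\D$-parabolicity for the non-smooth closed domain $M\setminus\Omega$ and checking that Propositions~\ref{DirPar_equiv} and~\ref{DirParSubharmonic} carry over to it verbatim (they do, since their proofs only used the exhaustion $\Omega_k\cap N$ and never the smoothness of the outer boundary); once these are granted, both directions are short.
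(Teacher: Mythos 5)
Your second implication is essentially the paper's own argument (Ahlfors property of $M\setminus\Omega$ coming from its $\D$-parabolicity, the ordinary maximum principle on the relatively compact $\bar\Omega$, then the strong maximum principle), and it is fine. The problem is in the first implication. Writing, as you do, $\bar N=M\setminus\Omega$, $\partial_0N=\partial\Omega\cap\inte M$, $\partial_1N=\bar N\cap\partial M$: after gluing $w=\max\{u-\mu,0\}$ (with $\mu=\sup_{\partial_0N}u$) with $0$ across $\Omega$, you assert $w=0$ on $\partial M$ and conclude $\sup_Mw=\sup_{\partial M}w=0$, hence $u\le\mu$ on $\bar N$. But nothing forces $u\le\mu$ on $\partial_1N$, so $w$ need not vanish there, and the intermediate claim you are driving at, $\sup_{\bar N}u=\sup_{\partial_0N}u$ for an arbitrary bounded subharmonic $u$, is false in general: control of the supremum by the Dirichlet part of the boundary alone is exactly the Neumann--Ahlfors property, which is strictly stronger than $\D$-parabolicity — the paper stresses precisely this distinction in the paragraph preceding Proposition~\ref{DirParSubharmonic}. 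Concretely, on the $\D$-parabolic but not $\N$-parabolic half-space $\bar\rr^{m+1}_+$ of Example~\ref{ex_D_parabolicity}, remove a small ball $\Omega$ around a point $y_0\in\inte M$ and take $u=-{}^{\mathcal{N}}\!G(\cdot,y_0)$: it is bounded, harmonic on $\inte\bar N$, with $\sup_{\bar N}u=0=\sup_{\partial_1N}u$ while $\sup_{\partial_0N}u<0$. So your parenthetical ``the Neumann boundary $\partial_1N$ does not affect the supremum \dots precisely the content of the extended Proposition~\ref{DirParSubharmonic}'' mischaracterizes that proposition, whose conclusion involves the full boundary $\partial N=\partial_0N\cup\partial_1N$.

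The corollary itself survives, because for the $\D$-parabolicity of $\bar N$ you only need to treat harmonic $u$ vanishing on the whole $\partial N$; then $\mu=0$, $u=0$ on $\partial_1N$, and your glued function genuinely vanishes on $\partial M$, so the argument closes (modulo the interface regularity, which should be handled as in the paper's proof of Proposition~\ref{DirParSubharmonic}: glue $\max\{u-\epsilon,0\}$ for small $\epsilon>0$, so the extension vanishes in a neighborhood of $\partial_0N$, and let $\epsilon\to0$ at the end). But at that point you are re-proving the implication (iii)$\Rightarrow$(ii) inside Proposition~\ref{DirParSubharmonic}. The paper's route is shorter and avoids the trap entirely: Proposition~\ref{DirParSubharmonic}(ii) (or Proposition~\ref{Dirichlet_Ahlfors}(iii)) already quantifies over \emph{every} subdomain of the $\D$-parabolic manifold $M$, so applying it to $M\setminus\Omega$ gives $\sup_{\bar N}u=\sup_{\partial N}u$ (and the analogous statement for $-u$ in the harmonic case) with no extension or gluing; for $u$ vanishing on $\partial N$ this yields $u\equiv0$ at once, which is exactly the paper's ``follows immediately from the previous proposition.'' Your treatment of the extended sense of $\D$-parabolicity for the closed set $M\setminus\Omega$ in the second implication is the same implicit step the paper takes and is unobjectionable.
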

\begin{proof}
The first statement follows immediately from the previous proposition. Suppose now that $\Omega$ is relatively compact and that $M\backslash \Omega$ is $\mathcal{D}$-parabolic. Let $u\in C^{\infty}(\inte\, M)\cap C^0(M)$ be bounded and satisfy $\Delta u=0$ in $\inte\,M $ and $u=0$ on $\partial M$. Since $M\backslash \Omega$ is $\mathcal{D}$-parabolic,
\[
\sup_{M\backslash \Omega} u = \sup_{\partial (M\backslash \Omega)} u = \sup_{\partial_0 \Omega\cup (\partial M\backslash \partial_1 \Omega)} u.
\]
Since $\Omega$ is relatively compact,  $\sup_\Omega u= \sup_{\partial \Omega} u$ so that, if $\sup_{\partial_0\Omega} u= \sup_{M\backslash \Omega} u $, then $u$ attains a maximum on $\inte\, M$ and $u=0$ from the strong maximum principle. Otherwise,
\[
\sup_{\partial_0 \Omega} u< \sup_{\partial M}  u=0,
\]
and therefore $u\leq 0$.  Arguing in a similar manner, $u\geq 0$, whence $u=0$ as required to show that $M$ is $\mathcal{D}$-parabolic.
\end{proof}

\begin{remark}\rm
In the above corollary, applying Lemma \ref{lemma_extension} below, we can think  of $M$ as a domain in a Riemannian manifold $N$ without boundary, and looking at the $\D$-parabolicity as  nonmassiveness of $M$, we recover  \cite[Proposition 4.2]{Grigoryan_Analytic}.
\end{remark}

Corollary~\ref{D-parabolicity of subdomains} in  turns implies the invariance of $\mathcal{D}$-parabolicity under compact perturbations.
\begin{corollary}
\label{D-parabolicity-invarince} Let $M_1$ and $M_2$ be Riemannian manifolds and suppose that there exist relatively compact sets $ \Omega_1\subset M_1$ and $\Omega_2 \subset M_2$ such that $M_1\backslash \Omega_1$ is isometric to $M_2\backslash \Omega_2$. Then $M_1$ is $\mathcal{D}$-parabolic if and only if so is $M_2$.
\end{corollary}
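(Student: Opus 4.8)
The plan is to obtain this at once from Corollary~\ref{D-parabolicity of subdomains}, after a preliminary reduction to smooth domains. I would first record two elementary facts: $\mathcal{D}$-parabolicity, as well as its extended version for closed domains, is invariant under Riemannian isometries, because the defining condition only involves bounded solutions of $\Delta u = 0$ vanishing on the boundary; and if one of $M_1, M_2$ is compact, then the existence of the isometry forces the other to be compact as well, so that both, being compact manifolds with nonempty boundary, are trivially $\mathcal{D}$-parabolic by the strong maximum principle. Hence I may assume that both $M_1$ and $M_2$ are noncompact.

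Write $\Phi\colon M_1\setminus\Omega_1\to M_2\setminus\Omega_2$ for the given isometry. I would choose a smooth, relatively compact domain $D_1\subset M_1$, with $\partial D_1$ transversal to $\partial M_1$, such that $\overline{\Omega_1}\subset D_1$ (for instance a regular superlevel set of a smooth proper exhaustion function of $M_1$), and set $D_2 := M_2\setminus\Phi(M_1\setminus D_1)$. Since $M_1\setminus D_1$ is closed in $M_1$ and contained in $M_1\setminus\overline{\Omega_1}$, which lies in the interior of the domain of $\Phi$, one checks that $D_2$ is a relatively compact domain in $M_2$ with smooth boundary $\partial D_2 = \Phi(\partial D_1)$ transversal to $\partial M_2$, and that $\Phi$ restricts to an isometry of $M_1\setminus D_1$ onto $M_2\setminus D_2$. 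Thus, replacing $\Omega_i$ by $D_i$, I may assume from the start that the $\Omega_i$ are relatively compact smooth domains and that $\Phi$ identifies $M_1\setminus\Omega_1$ with $M_2\setminus\Omega_2$.

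With this in place the argument is symmetric, so suppose $M_1$ is $\mathcal{D}$-parabolic. By the first assertion of Corollary~\ref{D-parabolicity of subdomains}, $M_1\setminus\Omega_1$ is $\mathcal{D}$-parabolic (in the extended sense); by isometry invariance, so is $M_2\setminus\Omega_2$; and since $\Omega_2$ is relatively compact, the second assertion of Corollary~\ref{D-parabolicity of subdomains} gives that $M_2$ is $\mathcal{D}$-parabolic. Interchanging the roles of $M_1$ and $M_2$ yields the converse. I expect the only genuinely nontrivial step to be the reduction of the second paragraph, namely verifying that enlarging $\Omega_1$ to a smooth domain $D_1$ produces, through $\Phi$, a smooth relatively compact domain $D_2$ with matching complement; this is a routine but slightly delicate point of differential topology, hinging on the fact that $M_1\setminus D_1$ sits at positive distance from $\Omega_1$, so that its $\Phi$-image is closed in $M_2$ with compact, smooth boundary. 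Everything else is a direct double application of Corollary~\ref{D-parabolicity of subdomains}.
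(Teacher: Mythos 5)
Your proposal is correct and takes essentially the same route the paper intends: the paper states this corollary as an immediate consequence of Corollary~\ref{D-parabolicity of subdomains}, i.e.\ passing from $M_1$ to $M_1\setminus\Omega_1$, transferring $\mathcal{D}$-parabolicity through the isometry, and then going back up to $M_2$, exactly as you do. Your preliminary reduction to smooth relatively compact domains (and the compact case) is just a careful elaboration of details the paper leaves implicit, not a different argument.
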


As a last sufficient condition for  $\D$-parabolicity we have the following  version of the Khas'minskii test.

\begin{lemma}[Khas'minskii test]\label{Khas-D-parabolicity}
Let $M$ be a manifold with boundary $\partial M$. If there exist a compact set $K\subset M$ and a function $0\leq \phi\in C^0(M\backslash \text{\r{K}})\cap W^{1,2}_{loc}(\inte\,M\backslash K)$ such that $\phi(x)\to +\infty $ as $x$ diverges, and
\[
-\int_{\inte\,M\backslash K} \langle \nabla \phi,\nabla \rho\rangle \leq 0\quad \forall \,\,0\leq \rho\in C^0(M\backslash \text{\r{K}})\cap W^{1,2}_{loc}(\inte\,M\backslash K),
\]
then $M$ is $\mathcal{D}$-parabolic.
\end{lemma}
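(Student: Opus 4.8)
The plan is to deduce the statement from the subharmonic Ahlfors principle of Proposition~\ref{DirParSubharmonic}, applied not to $M$ itself but to the complement of a relatively compact neighbourhood of $K$, and then to close up by the compact-perturbation invariance of Corollary~\ref{D-parabolicity of subdomains}. So first I would enlarge $K$: fix a relatively compact open set $\Omega\subset M$ with smooth boundary (transversal to $\partial M$) such that $K\subset\Omega$, and put $N:=M\setminus\Omega$, which is a manifold with boundary $\partial N=\partial\Omega\cup(\partial M\cap N)$, in the extended sense of the Remark following the definition of $\mathcal D$-parabolicity. The gain is that $\inte N=\inte M\setminus\overline\Omega$ is disjoint from $K$, so the hypotheses on $\phi$ now say that $\phi\in C^0(N)\cap W^{1,2}_{loc}(\inte N)$ is \emph{weakly superharmonic on all of} $\inte N$, with $\phi\geq0$ and $\phi(x)\to+\infty$ as $x$ diverges. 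By the second part of Corollary~\ref{D-parabolicity of subdomains} it then suffices to prove that $N$ is $\mathcal D$-parabolic. (If $M$ is compact one argues directly by the maximum principle; more generally one should assume $M$ has no compact connected component with empty boundary, which is the only genuine obstruction.)

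To show $N$ is $\mathcal D$-parabolic I would verify condition (iii) of Proposition~\ref{DirParSubharmonic} for $N$: let $u\in C^0(N)\cap W^{1,2}_{loc}(\inte N)$ be bounded with $\Delta u\geq0$ on $\inte N$, and suppose, towards a contradiction, that $S:=\sup_N u>\mu:=\sup_{\partial N}u$. Choose $\tilde\mu\in(\mu,S)$ and set $v:=\max\{u-\tilde\mu,0\}$. Since truncation preserves weak subsolutions, $v$ is bounded, nonnegative, and weakly subharmonic on $\inte N$, with $\sup_N v=S-\tilde\mu>0$; and because $u$ is continuous and $u\leq\mu<\tilde\mu$ on $\partial N$, there is a neighbourhood of $\partial N$ in $N$ on which $v\equiv0$, so in particular its weak gradient vanishes there.

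The core of the argument is the comparison $w_\epsilon:=v-\epsilon\phi$, $\epsilon>0$. Combining the weak formulations of $\Delta v\geq0$ and $\Delta\phi\leq0$ shows that $w_\epsilon$ is weakly subharmonic on all of $\inte N$; moreover $w_\epsilon\leq\sup_N v$ while $w_\epsilon(x)\to-\infty$ as $x$ diverges, so $\{w_\epsilon>0\}\subset\{\phi<(\sup_N v)/\epsilon\}$ is relatively compact in $N$, and since $w_\epsilon=-\epsilon\phi\leq0$ near $\partial N$ this set is in fact relatively compact in $\inte N$. Applying the weak maximum principle on each connected component $D$ of the open set $\{w_\epsilon>0\}$, on whose boundary $w_\epsilon$ vanishes by continuity, forces every such $D$ to be empty; hence $v\leq\epsilon\phi$ on $\inte N$. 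Fixing $x\in\inte N$ and letting $\epsilon\to0^+$ gives $v(x)\leq0$, so $v\equiv0$ on $\inte N$ and, by continuity, on $N$ --- contradicting $\sup_N v>0$. Therefore $\sup_N u=\sup_{\partial N}u$ for every admissible $u$, so $N$ is $\mathcal D$-parabolic, and Corollary~\ref{D-parabolicity of subdomains} gives that $M$ is $\mathcal D$-parabolic.

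The step I expect to be the real point --- and the reason for starting with the reduction to $N$ --- is making $w_\epsilon$ weakly subharmonic throughout the region where the maximum principle is used: $\phi$ is assumed superharmonic only outside $K$, and absorbing $K$ into the relatively compact $\Omega$ is precisely what renders this single compact obstruction harmless. The remaining ingredients are routine: that truncation preserves weak subsolutions, the weak maximum principle for weakly subharmonic functions on relatively compact domains, and the elementary asymptotics of $w_\epsilon$ near $\partial N$ and at infinity (the latter using that $\{\phi\leq c\}$ is relatively compact for each $c$, which is the precise content of $\phi\to+\infty$ at infinity).
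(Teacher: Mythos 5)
Your proof is correct, and its engine --- comparing the subharmonic function with $\epsilon\phi$ on its positivity set, which is relatively compact because $\phi\to+\infty$ at infinity and lies away from the boundary, and then applying the weak maximum principle there --- is exactly the engine of the paper's own argument. What differs is the packaging. The paper works on $M$ itself and directly with the definition of $\mathcal{D}$-parabolicity: for a bounded harmonic $u$ vanishing on $\partial M$ with $\sup_M u>0$, the strong maximum principle forces $\sup_K u<\sup_M u$, a level $\gamma$ strictly between these is chosen so that the set $\{u-\gamma-\epsilon\phi>0\}$ automatically avoids both $K$ and $\partial M$, and one well-chosen $\epsilon$ (small enough that the set is nonempty at a fixed point $x_o$) already gives the contradiction. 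You instead neutralize $K$ structurally: you excise a smooth relatively compact $\Omega\supset K$, prove the stronger subharmonic Ahlfors property (condition (iii) of Proposition~\ref{DirParSubharmonic}) for the exterior manifold $N=M\setminus\Omega$ by establishing $v\le\epsilon\phi$ for every $\epsilon>0$ and letting $\epsilon\to 0$, and then return to $M$ through the compact-perturbation statement of Corollary~\ref{D-parabolicity of subdomains}. Both routes are legitimate: the paper's is shorter and self-contained (only the strong maximum principle and one comparison on a bounded set), while yours proves a little more along the way --- that the exterior $N$ is itself $\mathcal{D}$-parabolic, indeed satisfies the Ahlfors principle for bounded subharmonic functions --- at the price of an extra appeal to Corollary~\ref{D-parabolicity of subdomains} (whose proof also uses the strong maximum principle, so nothing is gained in elementarity) and of the mild additional step of choosing $\Omega$ with smooth boundary transversal to $\partial M$. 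Your parenthetical about compact components with empty boundary simply makes explicit a standing assumption ($M$ connected with $\partial M\ne\emptyset$) that the paper's proof uses implicitly as well.
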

\begin{proof}
Let $u\in C^0(M)\cap C^{\infty}(\intM)$ be bounded  and satisfy $\Delta u =0$ in $\inte\,M$ and $u=0$ on $\partial M$. We prove that $u\leq 0$ on $M$. A similar argument shows that $u\geq 0$ on $M$, whence $u=0.$
Suppose by contradiction that $\sup_M u>0$. Since $u$ cannot attain its supremum in  $\inte\,M$, by the strong maximum principle, and $u=0$ on $\partial M$, $u$ cannot attain its supremum in $M$ and therefore
$\sup_K u <\sup_M u$. Let $\gamma>0$ be such that $\sup_K u < \gamma <\sup_M u$,  pick $x_o \in M\backslash K$ such that $u(x_o)>\gamma$ and let $v=u-\gamma -\epsilon \phi$, where $\epsilon>0$ is small enough that $v(x_o)>0$. Finally, let $\Omega=\{x\,:\, v(x)>0\}$. Then $x_o\in \Omega$, $\bar{\Omega}\cap (K\cup \partial M) =\emptyset$ and since $\phi(x)\to +\infty$ as $x$ diverges,  $\Omega$ is bounded. Moreover
$\Delta v\geq 0$ weakly in $\Omega$ and $v\leq 0$ on $\partial \Omega$, so that, by comparison $v\leq 0$ on $\Omega$, contradiction.
\end{proof}

\subsection{Dirichlet Green's kernel}

We now briefly describe the construction of the Dirichlet Green's kernel ${}^{\mathcal D}\!G$ of a manifold $M$ with boundary $\partial M$, namely,  the minimal positive solution of
\begin{equation}\label{DiricheltGreenFcn}
\Delta_x {}^{\mathcal D}\!G (x, y)= -\delta_y(x) , \,\, \forall \,\, x, y\in \inte\,M,\quad
{}^{\mathcal D}\!G (x, y) =0 \,\text{ if }\, x\text{ or } y \in \partial M.
\end{equation}
As in the case of manifolds without boundary, the Green's kernel ${}^{\mathcal D}\!G (x, y)$ can be defined by an exhaustion procedure. One considers an increasing exhaustion of $M$ by means of relatively compact sets $\Omega_k$ with smooth Dirichlet boundary $\overline{\partial_0\Omega_k}$ intersecting $\partial M$ transversally. Then the sequence $\{{}^{\mathcal D}\!G^{\Omega_k}\}$ of the
Dirichlet Green's kernels of $\Omega_k$ is increasing and by the local Harnack inequality, either it diverges everywhere on $\inte\,M$, or it converges locally uniformly to a smooth function
off the diagonal of $\inte\,M$ satisfying $\Delta_x {}^{\mathcal D}\!G(x,y)= -\delta_y(x)$.
Moreover, if $y\in\inte\,M$ and $r>0$ is such that $\bar B_r(y) \Subset \inte\,M$, then for every $k$ such that  $\bar B_r(y) \Subset \inte\, \Omega_k$ and every $x\in \Omega_k\backslash B_r(y)$ we have
\[
{}^{\mathcal D}\!G^{\Omega_k} (x,y) \leq \sup_{\partial B_r(y)}{}^{\mathcal D}\!G^{\Omega_k}(\cdot, y)
\leq \sup_{\partial B_r(y)} {}^{\mathcal D}\!G(\cdot, y).
\]
By Schauder's boundary estimates, it follows that ${}^{\mathcal D}\!G^{\Omega_k}(\cdot, y)$ converges locally uniformly in $M$ and therefore ${}^{\mathcal D}\!G(\cdot, y)$ is continuous up to the boundary and vanishes  on $\partial M$.

The Neumann Green's kernel ${}^{\mathcal N}\!G$, which is required to satisfy the Neumann boundary condition on $\partial M$ is obtained using a similar limiting procedure, and, since, by comparison,
\[
{}^{\mathcal D}\!G^{\Omega_k}\leq {}^{\mathcal N}\!G^{\Omega_k}, \quad \forall \,\,k,
\]
we have
\[
{}^{\mathcal D}\!G\leq {}^{\mathcal N}\!G.
\]
We are going to show that every manifold with boundary admits a Dirichlet Green's kernel. However, unlike in the Neumann setting, the existence of a Dirichlet Green's kernel is unrelated to the  Dirichlet parabolicity of  the underlying manifold.

\begin{example}[Euclidean half spaces]\label{ex_euclid_half_space_2}
{\rm
We revisit Example \ref{ex_D_parabolicity}. Let
\[
\bar\rr^{m+1}_{+} = \{(x,y) \in \rr^{m} \times \rr : y\geq 0 \}
\]
be the closed half space of $\rr^{m+1}$, $m\geq 1$. If $m=1$, we already know from volume growth considerations, see Proposition~\ref{vol-growth-D-parabolicity}, that
 $\bar \rr^{2}_{+}$ is $\N$-parabolic, hence $\D$-parabolic. On the other hand, since $\rr^{2}_{+}$ is conformally equivalent to the unit disc $D(0;1) \subset \rr^{2}$ via a conformal map that sends $\partial \rr^{2}_{+}$ onto $\partial D(0;1) \backslash \{e^{i \pi/2}\}$, we can  transplant to $\bar \rr^{2}_{+}$ the Dirichlet Green's kernel $u(x,y)=-\log |(x,y)|$ of $\bar D(0;1)$ with pole at the origin.\smallskip

If  $m\geq 2$, harmonicity is no longer a conformal property and the volume growth of $\bar \rr^{m+1}_{+}$ is  much too fast to be related to parabolicity. However, as we had seen in Example \ref{ex_D_parabolicity} the above conclusions can be extended even to $m \geq 2$ by means of different arguments. On the other hand, $\bar \rr^{m+1}_{+}$ possesses a Dirichlet Green's kernel with pole $o = (x_{o},y_{o})\in \rr^{m+1}_{+}$. It is given explicitly by
\[
\G^{\bar\rr^{m+1}_+}(p,o) = C \left\{ \frac{1}{|p - o|^{m-1}} - \frac{1}{|p-o'|^{m-1}} \right\}
\]
where $C=C(m) >0$ is a dimensional constant and $o' = (x_{o},-y_{o})$.
}
\end{example}

\begin{theorem}\label{th-DGreen}
Let $(M,g)$ be a  Riemannian manifold with nonempty  boundary $\partial M$. Then, for every $o \in \inte\, M$, there exists the Dirichlet Green's kernel $\G(x,o)$ of $M$ with pole at $o$.
\end{theorem}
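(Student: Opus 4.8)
The plan is to produce $\G(\cdot,o)$ as the limit of the exhaustion sequence $\{\G^{\Omega_{k}}(\cdot,o)\}$ introduced above; by the dichotomy recalled before the statement, this increasing sequence either diverges everywhere on $\inte\,M$ or converges locally uniformly off the diagonal to the desired kernel, so everything reduces to excluding the first alternative. The engine of the argument is Lemma~\ref{lemma_extension}: the manifold with boundary $(M,g)$ embeds isometrically as a smooth domain in a Riemannian manifold $(N,h)$ of the same dimension and \emph{without} boundary, and this extension is nonparabolic. Being nonparabolic, $N$ admits a minimal positive Green's kernel $G^{N}(\cdot,o)$, which is smooth and finite on $N\setminus\{o\}$, hence on $\bar\Omega_{k}\setminus\{o\}$ for every $k$.

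Next I would compare the two families of kernels by the maximum principle. Fix $\rho>0$ with $\bar B_{\rho}(o)\Subset\inte\,M$. For each large $k$, the function $w_{k}:=G^{N}(\cdot,o)-\G^{\Omega_{k}}(\cdot,o)$ is harmonic on $\inte\,\Omega_{k}\setminus\{o\}$ and, since both kernels carry the same fundamental-solution singularity at $o$, extends harmonically across $o$; it is continuous on $\bar\Omega_{k}\setminus B_{\rho}(o)$ away from the corner set $\overline{\partial_{0}\Omega_{k}}\cap\partial M$, bounded there, and satisfies $w_{k}=G^{N}(\cdot,o)\ge 0$ on $\partial\Omega_{k}$, where $\G^{\Omega_{k}}(\cdot,o)=0$. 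The weak maximum principle then gives
\[
\G^{\Omega_{k}}(\cdot,o)\le G^{N}(\cdot,o)\qquad\text{on }\Omega_{k},\ \text{for all }k.
\]
Thus the increasing sequence $\{\G^{\Omega_{k}}(\cdot,o)\}$ is bounded above, off the diagonal, by the finite continuous function $G^{N}(\cdot,o)$; the divergence alternative is impossible, so the sequence converges locally uniformly on $\inte\,M\setminus\{o\}$. By the Schauder boundary estimates recorded before the statement, the limit $\G(\cdot,o)$ is continuous up to $\partial M$ and vanishes there, and it solves $\Delta_{x}\G(x,o)=-\delta_{o}(x)$. Minimality and independence of the exhaustion follow in the standard way: any positive supersolution $w$ of $\Delta w=-\delta_{o}$ on $\inte\,M$ with $w\ge 0$ on $\partial M$ dominates each $\G^{\Omega_{k}}(\cdot,o)$ by the maximum principle, hence dominates $\G(\cdot,o)$; and interlacing two exhaustions shows the limit does not depend on the chosen $\{\Omega_{k}\}$. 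This establishes \eqref{DiricheltGreenFcn}.

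The point that requires care — besides the correct use of Lemma~\ref{lemma_extension}, which is where the real content sits — is the application of the maximum principle to $w_{k}$ near the corner $\overline{\partial_{0}\Omega_{k}}\cap\partial M$, where $\G^{\Omega_{k}}(\cdot,o)$ is only known to be bounded and continuous in the interior. This is dealt with either by observing that the corner set has vanishing capacity, hence is a removable singularity for the bounded harmonic function $w_{k}$, or by first exhausting $\Omega_{k}$ from inside by smooth domains that stay away from the corner and then letting them expand. I would also remark that the conclusion is equivalent to the nonparabolicity of $\inte\,M$ regarded as a manifold without boundary — indeed $\G$ coincides with the Green's kernel of $\inte\,M$ — and that this in turn is immediate from the capacity monotonicity $\mathrm{cap}_{\inte\,M}(K)\ge\mathrm{cap}_{N}(K)>0$ for every compact $K\subset\inte\,M$ with nonempty interior, $N$ being nonparabolic.
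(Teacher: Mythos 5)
Your argument is correct and follows essentially the same route as the paper: both rest on Lemma~\ref{lemma_extension} (the nonparabolic extension $(N,h)$) together with the comparison $\G^{\Omega_k}(\cdot,o)\le G^{N}(\cdot,o)$, which rules out the divergence alternative in the exhaustion and yields boundary vanishing via the Schauder estimates. The only difference is technical: the paper invokes Lemma~\ref{lemma-transversal} to produce an exhaustion $\Omega^{M}_{k}=\Omega^{N}_{k}\cap M$ and obtains $\G^{\Omega^{M}_{k}}\le \G^{\Omega^{N}_{k}}\le G^{N}$ by domain monotonicity, whereas you apply the maximum principle directly to $G^{N}-\G^{\Omega_{k}}$ and dispose of the corner set $\overline{\partial_{0}\Omega_{k}}\cap\partial M$ by a removable-singularity (zero capacity) or inner-exhaustion argument, which is an equally valid way of carrying out the same comparison.
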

The proof we presently describe relies on the next lemma of independent interest. It states that every manifold with boundary has a nonparabolic Riemannian extension. In the following section we will provide an alternative proof based on criticality theory.

\begin{lemma}\label{lemma_extension}
Let $(M,g)$ be an $m$-dimensional Riemannian manifold with boundary $\partial M \not = \emptyset$. Then, there exists an $m$-dimensional Riemannian manifold $(N,h)$ with $\partial N = \emptyset$ such that:
\begin{itemize}
 \item [(a)] $(M,g)$ is isometrically embedded into $(N,h)$ as a closed subset.
 \item [(b)] $(N,h)$ is nonparabolic (as a manifold without boundary).
\end{itemize}
\end{lemma}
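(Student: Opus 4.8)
The plan is to construct $(N,h)$ by gluing to $(M,g)$ a "collar extension" across $\partial M$ and then ensuring nonparabolicity by attaching something manifestly nonparabolic to the newly created piece. The first step is the doubling/collar construction: using a tubular neighbourhood of $\partial M$ inside $M$ (which exists since $\partial M$ is a smooth hypersurface and the metric extends smoothly, by the very definition of a Riemannian manifold with boundary recalled at the start of Section \ref{Sect_PotDir}), one identifies a neighbourhood of $\partial M$ with $\partial M \times [0,\varepsilon)$ and extends the metric smoothly across to $\partial M \times (-\varepsilon, \varepsilon)$; this is a routine partition-of-unity argument. Call the resulting open manifold without boundary $N_0 \supset M$, with $M$ sitting inside $N_0$ as a closed subset (the complement $N_0 \setminus M$ being the open "outer half-collar" $\partial M \times (-\varepsilon,0)$). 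At this stage $N_0$ need not be complete, and more importantly need not be nonparabolic.

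The second step is to arrange nonparabolicity. The cleanest way is to further modify $N_0$ only inside the outer collar $\partial M \times (-\varepsilon, 0)$, away from $M$, so that condition (a) is untouched. I would pick a point $p$ in this outer region and a small geodesic ball $B = B_\rho(p) \Subset N_0 \setminus M$, and replace the metric $h$ on a slightly larger ball by a metric which, outside $M$, contains an isometric copy of (a piece of) a nonparabolic model — for instance one can surger in a long thin Euclidean cylindrical end $\mathbb{S}^{m-1}\times[0,\infty)$ with a metric that is Euclidean-conelike, i.e. glue in an end isometric to $\mathbb{R}^m \setminus \bar{B}$, which is nonparabolic for $m\ge 3$ (and for $m=2$ glue in instead a flat cylinder $\mathbb{S}^1\times[0,\infty)$ capped appropriately, or use a hyperbolic-cusp-free model — in fact for $m=2$ one simply glues in a copy of a parabolic-free end such as $\mathbb{R}^2$ with a metric of the form $dr^2 + r^2\,d\theta^2$ truncated, noting $\mathbb{R}^2$ itself is parabolic so here one must instead attach a half-line of exponentially growing spheres, i.e. a hyperbolic end). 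Since nonparabolicity of a complete manifold is equivalent to nonparabolicity of (at least) one of its ends (the end-decomposition principle recalled in the Introduction, cf. \cite{Grigoryan_Analytic}), the presence of a single nonparabolic end makes all of $N=N_0$ nonparabolic. One should also ensure $N$ is complete if desired — this can be done by choosing the glued-in end to be complete and, if necessary, further attaching collars to close up any other incompleteness of $N_0$, but note the statement of the lemma does not actually require completeness of $N$, so this is optional.

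An alternative, perhaps slicker, route avoids surgery: take $N_0$ as above and simply observe that $M$, being a manifold with boundary, can be isometrically placed inside $N_0$ as a closed set, then choose $N$ to be \emph{any} nonparabolic complete manifold containing $N_0$'s relevant region — but this circularity is exactly what we are trying to avoid, so the surgery approach is the honest one. I expect the main obstacle to be purely technical bookkeeping: carrying out the smooth gluing of the metric across $\partial M$ and across the surgery region so that $h$ is genuinely smooth, that $M$ remains \emph{closed} (not merely embedded) in $N$, and that in the low-dimensional case $m=2$ the attached end is actually nonparabolic (since the most obvious flat ends in dimension $2$ are parabolic, one must attach an end of exponential volume growth, e.g. a hyperbolic funnel, and check it glues smoothly). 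None of these steps is deep, but the $m=2$ subtlety and the smoothness of the metric across the glued boundary are the places where care is needed; everything else follows from the standard end-decomposition characterization of (non)parabolicity.
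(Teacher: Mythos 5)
Your construction is correct in substance, but it follows a genuinely different, and heavier, route than the paper for the nonparabolicity part. Your first step (extending the metric across $\partial M$ by a collar so that $M$ sits as a closed subset of a boundaryless manifold $M'$) is exactly the paper's first step, which is delegated to \cite{PV_Extension}. For property (b), however, the paper performs no surgery at all: it simply deletes a small closed ball $\bar B'$ from $M'\setminus M$ and invokes the fact that the complement of a compact set of positive local capacity is nonparabolic, citing \cite[Theorems 3.5, 4.4]{Tr-Siberian}; this is dimension-free, needs no smooth gluing of metrics, and makes your $m=2$ detour (hyperbolic funnel versus flat end) unnecessary. Your gluing of a nonparabolic end does work, but the justification you offer --- ``a complete manifold is nonparabolic iff at least one of its ends is'' --- is a statement about complete manifolds, while your $N$ (like the paper's) is in general incomplete; the clean repair is Grigor'yan's massiveness criterion (\cite[Theorem 5.1]{Grigoryan_Analytic}, recalled in Remark \ref{nonmassiveness_grigor}), which requires no completeness: the glued end carries a bounded, nonconstant, nonnegative harmonic function vanishing near the gluing sphere (for $m\ge 3$, essentially $\max\{1-(R/|x|)^{m-2},0\}$ on the exactly Euclidean part), and its extension by zero is a bounded nonconstant subharmonic function on $N$, which forces nonparabolicity. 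With that substitution your argument closes; the paper's hole-punching device buys the same conclusion with far less bookkeeping, since removing $\bar B'$ already creates the required ``nonparabolic direction'' via positive capacity.
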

\begin{proof}
We first extend $(M,g)$ past its boundary so to obtain a new Riemannian manifold $(M',g')$ (possibly incomplete) without boundary that contains $(M,g)$ isometrically. Since $M'$ can be obtained by adding to $M$ a collar inside the diffeomorphic double of $M$, we can assume that $M$ is a closed subset; see e.g. \cite{PV_Extension}. Next, we delete from $M' \backslash M$ a small compact ball $\bar B'$. Let $(N,h)$ be the resulting Riemannian manifold. Since $\bar B'$ has positive local capacity in $M'$, $N$ is nonparabolic; \cite[Theorems 3.5, 4.4] {Tr-Siberian}.
\end{proof}

We shall also need smooth exhaustions of an ambient manifold that restrict to Lipschitz exhaustions of a given smooth open submanifold.

\begin{lemma}\label{lemma-transversal}
Let $M$ be an open submanifold  with smooth boundary $\partial M \not= \emptyset$ inside the differentiable manifold $N$ without boundary. Then, there exists a relatively compact exhaustion $\{\Omega^{N}_{k}\} \nearrow N$ of $N$ such that, for every $k$, $\partial \Omega^{N}_{k}$ is a smooth hypersurface intersecting transversally $\partial M$.
 \end{lemma}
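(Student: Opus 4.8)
The plan is to obtain the exhaustion by a general position / transversality argument applied to an auxiliary exhaustion function, perturbed so that its level sets meet $\partial M$ cleanly. First I would fix a smooth proper function $f\colon N\to[0,+\infty)$ (for instance the square of a complete Riemannian distance on $N$, smoothed, or any Morse-type proper function whose sublevel sets $\{f\le c\}$ are compact — such exists on any manifold without boundary). The sublevel sets $\Omega_c=\{f<c\}$ already form a relatively compact exhaustion; the only thing to arrange is transversality of $\partial\Omega_c=f^{-1}(c)$ with $\partial M$. The key point is that transversality of $f^{-1}(c)$ with the submanifold $\partial M\subset N$ is controlled entirely by the restriction $f|_{\partial M}$: indeed $f^{-1}(c)\pitchfork\partial M$ at a point $p$ of the intersection precisely when $T_p\partial M\not\subset T_p f^{-1}(c)=\ker df_p$, i.e. when $d(f|_{\partial M})_p\neq 0$. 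Hence $f^{-1}(c)$ is transverse to $\partial M$ whenever $c$ is a regular value of the smooth function $f|_{\partial M}\colon\partial M\to\rr$ (and simultaneously a regular value of $f$ itself, so that $f^{-1}(c)$ is a smooth hypersurface in the first place).

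Next I would invoke Sard's theorem twice: the set of critical values of $f\colon N\to\rr$ has measure zero, and the set of critical values of $f|_{\partial M}\colon\partial M\to\rr$ has measure zero; their union is still null, so the set $R$ of common regular values is dense (in fact of full measure) in $[0,+\infty)$. Choosing any strictly increasing sequence $c_k\in R$ with $c_k\to+\infty$ and setting $\Omega^N_k=\{f<c_k\}$ then produces a relatively compact exhaustion of $N$ whose boundaries $\partial\Omega^N_k=f^{-1}(c_k)$ are smooth compact hypersurfaces meeting $\partial M$ transversally, which is exactly the assertion. (If one additionally wants $\partial M\cap\partial\Omega^N_k$ to be, for each $k$, a smooth hypersurface in $\partial M$ — which is what ``Lipschitz exhaustion of $M$'' suggests — this is automatic from $c_k$ being a regular value of $f|_{\partial M}$.)

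The one genuinely delicate point — the main obstacle — is the behaviour near $\partial M$: a priori a level set $f^{-1}(c)$ could be tangent to $\partial M$ along an entire subset, and one must make sure the regular-value condition for $f|_{\partial M}$ really rules this out and really yields honest transversality of the two hypersurfaces $f^{-1}(c)$ and $\partial M$ inside $N$, not merely transversality of $f^{-1}(c)\cap\partial M$ as a submanifold of $\partial M$. The identity $T_pf^{-1}(c)+T_p\partial M = \ker df_p + T_p\partial M$, together with the observation that this sum is all of $T_pN$ iff $df_p$ does not annihilate $T_p\partial M$ iff $d(f|_{\partial M})_p\neq0$, closes this gap; it is a two-line linear-algebra verification once stated carefully. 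A minor technical remark: if one prefers to start from an arbitrary smooth proper $f$ that is \emph{not} already in general position with respect to $\partial M$, one may first perturb $f$ by a small compactly supported smooth function on a neighbourhood of each compact piece of $\partial M$ to make $f|_{\partial M}$ Morse, but this is not even needed since Sard's theorem applies to the unperturbed $f|_{\partial M}$ directly.
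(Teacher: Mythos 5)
Your argument is correct, but it follows a genuinely different route from the paper's. You fix a smooth proper exhaustion function $f$ on $N$, reduce transversality of the level set $f^{-1}(c)$ with $\partial M$ to the linear-algebra criterion $\ker df_p + T_p\partial M = T_pN \iff d(f|_{\partial M})_p \neq 0$, and then apply Sard's theorem simultaneously to $f$ and to $f|_{\partial M}$ to pick a divergent sequence of common regular values $c_k$; the sublevel sets $\{f<c_k\}$ are then automatically nested, relatively compact, with smooth boundaries $f^{-1}(c_k)$ meeting $\partial M$ transversally. The paper instead starts from an arbitrary relatively compact smooth exhaustion $\{U^N_k\}$ and perturbs the inclusions of the hypersurfaces $\partial U^N_{2k}$ via the Transversality Homotopy Theorem of Guillemin--Pollack, using stability of embeddings and the fact that the homotopy can be supported in a small neighborhood of $\partial U^N_{2k}$ so that the new hypersurface still bounds a domain squeezed between $U^N_{2k-1}$ and $U^N_{2k+1}$. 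Your approach buys simplicity: nestedness and the bounding property come for free from the sublevel-set structure, with no need for stability or localization-of-homotopy arguments, and the delicate point you flag (that regularity of $c$ for $f|_{\partial M}$ gives honest transversality of the two hypersurfaces in $N$, not just a condition inside $\partial M$) is indeed settled by your two-line verification. The paper's approach buys the ability to correct a prescribed exhaustion rather than build one from scratch, which is occasionally convenient but not needed for the statement at hand. Both proofs are complete; yours could even be preferred for its economy.
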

\begin{proof}
Fix a relatively compact exhaustion $\{U^{N}_{k}\} \nearrow N$ with smooth boundary $\partial U^{N}_{k} \not=\emptyset$ and set $U^{N}_{-1} = \emptyset$. Using induction on $k$, we are going to modify each smooth compact hypersurface $\partial U^{N}_{2k}$ inside the open manifold $U^{N}_{2k+1} \backslash \bar U^{N}_{2k-1}$ so to obtain a new hypersurface $\Sigma_{2k}$ such that:
\begin{itemize}
 \item [(c)] $\Sigma_{2k}$ bounds a domain $\Omega^{N}_{2k}$ such that $U^{N}_{2k-1} \Subset \Omega^{N}_{2k} \Subset U^{N}_{2k+1}$;
 \item [(d)] $\Sigma_{2k}$ intersects transversally $\partial M$ (possibly in an empty set).
\end{itemize}
To this end, we consider the inclusion
\[
i_{k} : \bar U^{N}_{2k} \hookrightarrow N
\]
and we apply the Transversality Homotopy Theorem, \cite[p. 70]{GP}, to get a smoothly homotopic map
\[
j_{k} : \bar U^{N}_{2k} \hookrightarrow N
\]
such that $\Sigma_{2k} = j_{k}(\partial U^{N}_{2k})$ is transversal to $\partial M$, i.e., property (d) holds. Since both ``maximal rank'' of a map and ``embededness'' are stable properties, \cite[p. 35]{GP}, we can assume that $j_{k}$ and $j_{k}|_{\partial U^{N}_{2k}}$ are still embeddings. Finally, the homotopy can be obtained by modifying $i_{k}$ only in an arbitrarily small neighborhood of $\partial U^{N}_{2k}$, \cite[p. 72]{GP}. Therefore, if we set $\Omega^{N}_{2k} = j_{k}(U^{N}_{2k})$, we can always guarantee that $\Sigma_{2k}$ satisfies also condition (c).
\end{proof}

We are now ready to give the
\begin{proof}[Proof of Theorem \ref{th-DGreen}]
Let $(N,h)$ be a nonparabolic Riemannian extension of $M$ with Green's kernel $G^{N}$. Fix $o \in \inte\, M$ and recall that both $G^{N}(\cdot,o)$ and $\G^{M}(\cdot,o)$ are obtained as the limit of Dirichlet Green's kernel for a chosen relatively compact exhaustion. According to Lemma \ref{lemma-transversal}, let $\{ \Omega^{N}_{k} \} \nearrow N$ be a smooth, relatively compact exhaustion of $N$ such that $o \in \Omega_{0}$ and each intersection $\partial \Omega_{k}^{N} \cap \partial M$ is transversal. For each $k$, define $\Omega^{M}_{k} = \Omega^{N}_{k} \cap M$. Since $M \subset N$ is a closed domain then each $\Omega^{M}_{k}$ is relatively compact. Moreover, by the transversality intersection condition, $\partial \Omega^{M}_{k}$ is Lipschitz. Thus, $\{ \Omega^{M}_{k}\} \nearrow M$ is a good relatively compact exhaustion of  $M$. Let $\G^{\Omega^{N}_{k}}(\cdot,o)$ and $\G^{\Omega^{M}_{k}}(\cdot,o)$ be the Dirichlet Green's kernels with pole $o$ of the corresponding domains. Then, by comparison
\[
\G^{\Omega^{M}_{k}}(x,o) \leq \G^{\Omega^{N}_{k}} (x,o) \leq G^{N}(x,o) \ \ \text{on} \ \ \Omega^{M}_{k},
\]
and taking the limit as $k \to +\infty$ we conclude
\[
\G^{M}(x,o) \leq G^{N}(x,o) \ \ \text{on} \ \ M.
\]
\end{proof}
Note that the above proof shows that if $M$ is a manifold with boundary sitting inside a manifold $N$
then
\begin{equation}\label{ineq_G_D}
{}^{\mathcal D}\!G^M\leq G^N.
\end{equation}

Observe also that, if we define the Green's kernel $G^{\inte\,M}$ of $\inte\,M$ as the limit $\lim_k {}^{\D}\!G^{\Omega'_k}$ where  $\{\Omega'_k\}$ is an increasing exhaustion of $\inte\,M$ by relatively compact (in $\intM$)  open sets with smooth boundary, the above argument shows that       $$G^{\inte\,M}\leq {}^{\mathcal D}\!G^{M}.$$
This latter inequality together with the fact that ${}^{\mathcal D}\!G^{M}(x,y)\to 0$ as $x\to \bar x \in\partial M$ show that $G^{\inte\,M}$ can be extended to a continuous function off the diagonal of $M$. The usual comparison argument shows that, for every open set $\{\Omega_k\}$ in the exhaustion of $M$ chosen to define ${}^{\mathcal D}\!G^{M}$, one has ${}^{\D}\!G^{\Omega_k}\leq G^{\inte\,M}$ off the diagonal of $\Omega_k$ and therefore
${}^{\mathcal D}\!G^{M} \leq G^{\inte\,M}$.

We summarize the above discussion in the following

\begin{theorem}\label{subcriticality}
Let $M$ be a manifold with boundary $\partial M\ne \emptyset$. Then for every $o\in\inte\, M$ there exists a minimal positive Green's kernel $G^{\inte\,M}(\cdot, o)$  with pole at $o$. Moreover $G^{\inte\,M}(\cdot, o)={}^{\mathcal D}\!G^{M}(\cdot, o)$.
\end{theorem}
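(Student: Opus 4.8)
The plan is to assemble into a clean statement the observations already gathered in the discussion preceding the theorem, splitting the argument into two parts: (a) existence, independence of the exhaustion, and minimality of the Green's kernel $G^{\inte\,M}$ of the open manifold $\inte\,M$; and (b) the identity $G^{\inte\,M}(\cdot,o)={}^{\mathcal D}\!G^{M}(\cdot,o)$.

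First I would prove existence. Fix $o\in\inte\,M$ and pick an increasing exhaustion $\{\Omega'_k\}\nearrow\inte\,M$ by relatively compact open subsets of $\inte\,M$ with smooth boundary, with $o\in\Omega'_0$. The sequence $\{{}^{\mathcal D}\!G^{\Omega'_k}(\cdot,o)\}$ of Dirichlet Green's kernels is increasing in $k$ by the weak maximum principle, so by the local Harnack inequality it either diverges everywhere on $\inte\,M\backslash\{o\}$ or converges locally uniformly off the diagonal to a positive distributional solution of $\Delta_x G(x,o)=-\delta_o(x)$. To exclude divergence, recall that ${}^{\mathcal D}\!G^{M}(\cdot,o)$ exists by Theorem~\ref{th-DGreen} and is a positive fundamental solution on all of $M$; since each $\overline{\Omega'_k}$ is a compact subset of $\inte\,M$, the function $w={}^{\mathcal D}\!G^{M}(\cdot,o)-{}^{\mathcal D}\!G^{\Omega'_k}(\cdot,o)$ is harmonic on $\Omega'_k\backslash\{o\}$, bounded (hence harmonic) across $o$ because the two kernels carry the same fundamental singularity, and strictly positive on $\partial\Omega'_k$; the minimum principle then gives ${}^{\mathcal D}\!G^{\Omega'_k}\le{}^{\mathcal D}\!G^{M}$ on $\Omega'_k$. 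Letting $k\to+\infty$, $G^{\inte\,M}:=\lim_k{}^{\mathcal D}\!G^{\Omega'_k}\le{}^{\mathcal D}\!G^{M}<+\infty$ off the diagonal, so $\inte\,M$ is nonparabolic and the limit is a genuine Green's kernel. The same comparison argument, applied with an arbitrary positive fundamental solution $u$ of $\Delta u=-\delta_o$ on $\inte\,M$ in place of ${}^{\mathcal D}\!G^{M}$ (now using $u\ge 0={}^{\mathcal D}\!G^{\Omega'_k}$ on $\partial\Omega'_k$), shows $u\ge G^{\inte\,M}$; this gives both the minimality of $G^{\inte\,M}$ and its independence of the chosen exhaustion.

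For part (b), the inequality $G^{\inte\,M}\le{}^{\mathcal D}\!G^{M}$ has just been established. Combined with $0\le G^{\inte\,M}$ and the fact, recorded before the statement, that ${}^{\mathcal D}\!G^{M}(x,o)\to 0$ as $x\to\bar x\in\partial M$, the squeeze theorem shows that $G^{\inte\,M}(\cdot,o)$ extends to a continuous function on $M\backslash\{o\}$ that vanishes on $\partial M$. For the reverse inequality, let $\{\Omega_k\}\nearrow M$ be the relatively compact exhaustion (with smooth Dirichlet boundary $\overline{\partial_0\Omega_k}$ transversal to $\partial M$, hence each $\Omega_k$ a Lipschitz domain) used to define ${}^{\mathcal D}\!G^{M}$. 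On $\Omega_k$ the extended function $G^{\inte\,M}(\cdot,o)$ is a positive fundamental solution which is nonnegative on $\partial_0\Omega_k$ (where it is in fact strictly positive) and vanishes on $\partial_1\Omega_k\subset\partial M$, so $w_k=G^{\inte\,M}-{}^{\mathcal D}\!G^{\Omega_k}$ is harmonic on $\inte\,\Omega_k$ after removing the common singularity, continuous on $\overline{\Omega_k}$, and $\ge 0$ on all of $\partial\Omega_k=\partial_0\Omega_k\cup\partial_1\Omega_k$; the weak maximum principle yields ${}^{\mathcal D}\!G^{\Omega_k}\le G^{\inte\,M}$ on $\Omega_k$, and letting $k\to+\infty$ gives ${}^{\mathcal D}\!G^{M}\le G^{\inte\,M}$ on $M$. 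Together with the opposite inequality this proves $G^{\inte\,M}={}^{\mathcal D}\!G^{M}$.

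I expect the only genuine subtlety to be the bookkeeping at the boundary in the two comparisons: near the pole $o$ one must use that the difference of two fundamental solutions with the same singularity is bounded and therefore extends harmonically, and at $\partial_1\Omega_k\subset\partial M$ one needs both ${}^{\mathcal D}\!G^{\Omega_k}$ and the extension of $G^{\inte\,M}$ to vanish continuously — the first by Schauder boundary estimates on the Lipschitz domain $\Omega_k$ (already invoked in the construction of ${}^{\mathcal D}\!G^{M}$), the second precisely by the squeeze argument above. Everything else is a routine application of Harnack's inequality and the weak maximum principle on relatively compact domains.
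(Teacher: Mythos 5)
Your proposal is correct and follows essentially the same route as the paper: it uses the already established existence of ${}^{\mathcal D}\!G^{M}$ (Theorem \ref{th-DGreen}) to bound the exhaustion kernels of $\inte\,M$ and obtain $G^{\inte\,M}\le{}^{\mathcal D}\!G^{M}$, extends $G^{\inte\,M}$ continuously to $\partial M$ by the vanishing of ${}^{\mathcal D}\!G^{M}$ there, and then reverses the inequality by comparison on the exhaustion $\{\Omega_k\}$ defining ${}^{\mathcal D}\!G^{M}$. The only additions you make — spelling out minimality against an arbitrary positive fundamental solution and the removable-singularity bookkeeping at the pole — are routine and consistent with the paper's argument.
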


\subsection{Dirichlet potential theory from the viewpoint of the Criticality Theory}\label{subsect-Criticality}

The Dirichlet parabolicity of a manifold with boundary, as well as the existence of the minimal positive Green's kernel subject to Dirichlet boundary conditions, can be considered in the framework of the \textit{criticality theory} for Schr\"odinger operators. This theory was initiated by B. Simon \cite{Si},  developed by M. Murata \cite{Mu, Mu1} and Y. Pinchover \cite{Pi, Pi1, Pi2}, and it has been recently extended to the setting of semilinear operators and general boundary conditions, see e.g. \cite{DePi, DeMaPi, PiSa}.  A worth reference in the classical Dirichlet case for linear operators is the book of R. G. Pinsky \cite{Pinsky}.\smallskip

Let $(M,g)$ be an $m$-dimensional Riemannian manifold with smooth boundary $\partial M \not=\emptyset$. As we have already remarked, there is no loss of generality in considering $M$ as a smooth domain inside an equidimentional ambient manifold $(N,h)$.  Using the viewpoint of the criticality theory, we are going to show that:\smallskip

\noindent - $\inte\, M$ has a minimal positive Green's kernel which, in fact, is the restriction to $\inte\, M$ of the Dirichlet Green's kernel of $M$. In particular, $\inte\, M$ has a Martin boundary $\pma M$.\smallskip

\noindent - There is a connection of the $\D$-parabolicity of $M$ and the behavior of the Martin kernels with poles in $\pma M \backslash \partial M$.\smallskip

\noindent - A natural extension of the notion of parabolicity can be investigated  for Schr\"odinger-type operators $P = -\Delta +V$  subject to more general boundary conditions.\smallskip

\noindent We are indebted to the anonymous referee for having pointed out all of these facts to us.

To begin with, we recall that a second order elliptic operator $P$ defined in a domain
$\Omega$ in a Riemannian manifold is subcritical if it admits a positive minimal Green's kernel, supercritical if there are no positive solutions of $Pu=0$ and critical if it is neither sub nor supercritical, that is it does not admit a minimal positive Green's kernel but there exist positive solutions of $Pu=0$.

We give the following
\begin{lemma}\label{lemma-criticality-green}
 The Riemannian manifold without boundary $(\inte\, M,g)$ has a minimal positive Green's kernel $G^{\inte\, M}(x,y)$.
\end{lemma}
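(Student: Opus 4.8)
The plan is to derive the existence of $G^{\inte\, M}$ from the subcriticality of the Laplace--Beltrami operator on a suitable ambient manifold, using the domain monotonicity principle of criticality theory rather than a direct exhaustion of $\inte\, M$.

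First I would realize $\inte\, M$ as a subdomain of a manifold without boundary on which $-\Delta$ is subcritical. By Lemma~\ref{lemma_extension}, $(M,g)$ embeds isometrically as a closed subset of an $m$-dimensional manifold $(N,h)$ with $\partial N=\emptyset$ which is \emph{nonparabolic}. Nonparabolicity of $N$ is exactly the statement that $-\Delta$ is subcritical on $N$, i.e.\ $N$ carries a positive minimal Green's kernel $G^{N}(x,y)$. In particular $\inte\, M$ is a (connected) open subset of $N$, hence a subdomain of a manifold on which $-\Delta$ is subcritical.

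Next I would invoke the basic monotonicity principle of criticality theory: if a symmetric second order elliptic operator $P$ is subcritical on a domain $\Omega$, then it is subcritical on every subdomain $\Omega_{1}\subseteq\Omega$, and the associated minimal Green's kernels satisfy $G^{\Omega_{1}}_{P}\leq G^{\Omega}_{P}$ on $\Omega_{1}$ off the diagonal (see, e.g., \cite{Pinsky}, \cite{Pi2}). Applying this with $P=-\Delta$, $\Omega=N$ and $\Omega_{1}=\inte\, M$ yields that $-\Delta$ is subcritical on $\inte\, M$; that is, $\inte\, M$ admits a positive minimal Green's kernel $G^{\inte\, M}(x,y)$, and moreover $G^{\inte\, M}\leq G^{N}|_{\inte\, M}$ off the diagonal, in accordance with the estimate already recorded in the proof of Theorem~\ref{th-DGreen}.

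Concretely, this monotonicity is nothing but the usual comparison argument: along any relatively compact exhaustion $\Omega'_{k}\nearrow\inte\, M$ by open sets with smooth boundary, the Dirichlet Green's kernels $G^{\Omega'_{k}}(\cdot,y)$ increase; applying the weak maximum principle to the function $G^{N}(\cdot,y)-G^{\Omega'_{k}}(\cdot,y)$, which extends harmonically across $y$ and is nonnegative on $\partial\Omega'_{k}$, one gets $G^{\Omega'_{k}}(\cdot,y)\leq G^{N}(\cdot,y)$ on $\Omega'_{k}$, so by the local Harnack inequality the limit is a genuine function off the diagonal, namely $G^{\inte\, M}(\cdot,y)$. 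The only point requiring a little care is ensuring that $\inte\, M$ is genuinely realized as a subdomain of a \emph{nonparabolic} manifold of the same dimension, which is precisely what Lemma~\ref{lemma_extension} provides; once that is in place the transfer of subcriticality is automatic and everything else is routine.
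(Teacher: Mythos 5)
Your argument is correct, but it proves the lemma by a genuinely different route from the paper. The paper's proof is intrinsic to $\inte\, M$: it invokes the criticality-theoretic criterion that $-\Delta$ is subcritical on $\inte\, M$ as soon as the cone $\CSH_{-\Delta}(\inte\, M)$ of positive superharmonic functions has dimension larger than one, and it verifies this by producing two linearly independent positive harmonic functions $u_x,u_y$ on $\inte\, M$ as limits of solutions of Dirichlet problems along an exhaustion, with boundary data $1$ on two disjoint neighborhoods $I_x,I_y\subset\partial M$ and $0$ elsewhere. You instead embed $\inte\, M$ in the nonparabolic extension $(N,h)$ furnished by Lemma~\ref{lemma_extension} and transfer subcriticality to the subdomain by domain monotonicity, i.e.\ by the comparison $G^{\Omega'_k}(\cdot,y)\leq G^N(\cdot,y)$ along an exhaustion; your handling of the comparison (cancellation of the singularity, positivity on $\partial\Omega'_k$, Harnack for the increasing limit) is sound, and there is no circularity since Lemma~\ref{lemma_extension} precedes this lemma. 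The trade-off is methodological rather than logical: the whole point of the criticality subsection is to give an alternative proof of Theorems~\ref{th-DGreen} and~\ref{subcriticality} that does not rest on the nonparabolic extension (whose nonparabolicity depends on Troyanov's capacity result), whereas your argument reintroduces that dependence and essentially reproduces the paper's first proof of Theorem~\ref{th-DGreen}, so the subsequent ``alternative proof'' would no longer be independent of it. What your route buys is brevity once Lemma~\ref{lemma_extension} is granted, together with the explicit bound $G^{\inte\, M}\leq G^N$; what the paper's route buys is an extension-free, purely boundary-based verification of subcriticality whose criterion (dimension of the cone of positive supersolutions) extends naturally to the Schr\"odinger operators and general boundary conditions discussed at the end of the section.
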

\begin{proof}
Let us define the cone
\[
\CSH_{-\Delta}(\inte\, M) =\{ u \in C^{2}(\inte\, M): u>0\text{ and }\Delta u \leq 0\}.
\]
It is enough to prove that
\begin{equation}\label{criticality1}
\dim \CSH_{-\Delta}(\inte\, M) >1.
\end{equation}
Indeed, according to \cite[Chapter 4]{Pinsky} or \cite[Theorem 1.4]{PiSa},
this condition is equivalent to the subcriticality of the (positive definite) Laplace-Beltrami operator $-\Delta$ in $\inte\, M$.

Now, let $x,y$ be two distinct points in $\partial M$, and let $I_{x},I_{y}\subset \partial M$ be disjoint neighborhoods of $x$ and $y$. Consider an exhaustion $\{\Omega_{k}\}$ of $M$ as in Proposition \ref{DirPar_equiv} and, for every $k \geq 1$, let $u_{k,x}$ and $u_{k,y}$ be the solutions of the Dirichlet problems
\[
\begin{array}{cccc}
 \begin{cases}
 \Delta u_{k,x} = 0& \text {in } \Omega_{k}\\
 u_{k,x} =0 &\text{on }\partial \Omega_{k} \backslash I_{x}\\
 u_{k,x} =1 &\text{on }I_{x},
\end{cases}
&
\begin{cases}
 \Delta u_{k,y} = 0& \text {in } \Omega_{k}\\
 u_{k,y} =0 &\text{on }\partial \Omega_{k} \backslash I_{y}\\
 u_{k,y} =1 &\text{on }I_{y}.
\end{cases}
\end{array}
 \]
Then $u_{x}= \lim_{k\to+\infty} u_{k,x}$ and $u_{y} = \lim_{k\to+\infty} u_{k,y}$ are two linearly independent positive harmonic functions on $\inte\, M$, proving the validity of \eqref{criticality1}.
\end{proof}
Using Lemma \ref{lemma-criticality-green} we obtain an alternative proof of Theorems \ref{th-DGreen} and \ref{subcriticality}.

\begin{theorem}
 Let $(M,g)$ be a Riemannian manifold with smooth boundary $\partial M \not=\emptyset$. Then $M$ has a Dirichlet Green's kernel $\G^{M}(x,y)$. Moreover, $\G^{M}(x,y) = G^{\inte\, M}(x,y)$ on $\inte\, M$.
\end{theorem}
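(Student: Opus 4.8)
The plan is to read off the statement from Lemma~\ref{lemma-criticality-green}, which already furnishes the minimal positive Green's kernel $G^{\inte\, M}(\cdot,o)$ of the open Riemannian manifold $\inte\, M$, i.e.\ the subcriticality of $-\Delta$ on $\inte\, M$. By construction, $\G^{M}(\cdot,o)$ is the increasing limit of the Dirichlet Green's kernels $\G^{\Omega_k}(\cdot,o)$ of a relatively compact exhaustion $\{\Omega_k\}\nearrow M$ with smooth Dirichlet boundary transversal to $\partial M$, and by the dichotomy recalled there this limit either is identically $+\infty$ or converges locally uniformly to a kernel satisfying \eqref{DiricheltGreenFcn}. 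Hence the only thing to be proved for the \emph{existence} of $\G^{M}$ is that the limit is not identically $+\infty$; the identity $\G^{M}=G^{\inte\, M}$ on $\inte\, M$ will then come from the same comparison arguments already used in the proof of Theorem~\ref{th-DGreen}.

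First I would fix $o\in\inte\, M$, $o\in\Omega_{0}$, and compare $\G^{\Omega_k}(\cdot,o)$ with $G^{\inte\, M}(\cdot,o)$ on $\inte\,\Omega_k$. Both solve $\Delta(\cdot)=-\delta_o$ with the same singularity at $o$, so their difference $w_k$ extends harmonically across the pole and is harmonic on $\inte\,\Omega_k$. On $\partial_0\Omega_k$ we have $\G^{\Omega_k}=0\le G^{\inte\, M}$, and on $\partial_1\Omega_k\subset\partial M$ we have $\G^{\Omega_k}\to0$ while $G^{\inte\, M}$ is positive, so $\limsup w_k\le0$ along the whole of $\partial\Omega_k$. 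Since $\inte\,\Omega_k$ is relatively compact in $M$, the maximum principle gives $\G^{\Omega_k}\le G^{\inte\, M}$ on $\inte\,\Omega_k$. Letting $k\to+\infty$, the increasing sequence $\{\G^{\Omega_k}(\cdot,o)\}$ is bounded above by $G^{\inte\, M}(\cdot,o)<+\infty$, hence it does not diverge; by the dichotomy it converges to a genuine Dirichlet Green's kernel $\G^{M}(\cdot,o)$, with $\G^{M}\le G^{\inte\, M}$ on $\inte\, M$.

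For the reverse inequality I would use, as in the proof of Theorem~\ref{th-DGreen}, a relatively compact exhaustion $\{\Omega'_j\}\nearrow\inte\, M$ defining $G^{\inte\, M}$: each $\Omega'_j$ is a relatively compact domain of $M$ with empty Neumann boundary, so $G^{\Omega'_j}=\G^{\Omega'_j}$, and $\Omega'_j\Subset\Omega_k$ for $k$ large yields $G^{\Omega'_j}\le\G^{\Omega_k}\le\G^{M}$ by domain monotonicity of the Dirichlet Green's kernels; letting $j\to+\infty$ gives $G^{\inte\, M}\le\G^{M}$. Together with the previous paragraph this proves $\G^{M}=G^{\inte\, M}$ on $\inte\, M$.

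The point I would treat with most care is the comparison on $\inte\,\Omega_k$: a priori I have no information on the boundary values of $G^{\inte\, M}(\cdot,o)$ at $\partial M$, and $\inte\, M$ is not relatively compact in itself. What rescues the argument is that $\inte\,\Omega_k$, although touching $\partial M$, is relatively compact \emph{in $M$}, so the classical maximum principle applies, and that on the $\partial M$-portion of its boundary the only ingredient used is the trivial bound $G^{\inte\, M}\ge0$; the actual vanishing of $G^{\inte\, M}$ at $\partial M$ then follows a posteriori from $\G^{M}=G^{\inte\, M}$ and the boundary behaviour of $\G^{M}$.
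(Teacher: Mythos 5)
Your proposal is correct and follows essentially the same route as the paper's alternative proof: existence of $G^{\inte\, M}$ via Lemma~\ref{lemma-criticality-green} (subcriticality of $-\Delta$ on $\inte\, M$), followed by the two comparison arguments along an exhaustion of $M$ (giving $\G^{M}\leq G^{\inte\, M}$, hence existence) and an exhaustion of $\inte\, M$ (giving $G^{\inte\, M}\leq \G^{M}$). The only, harmless, deviation is that the paper invokes the continuous extension of $G^{\inte\, M}$ to $\partial M$ with zero boundary values (quoting Pinsky) before comparing, whereas you observe that the comparison on $\inte\,\Omega_k$ needs only $G^{\inte\, M}\geq 0$ together with the vanishing of $\G^{\Omega_k}$ on $\partial_1\Omega_k$, recovering the boundary behaviour of $G^{\inte\, M}$ a posteriori, exactly as remarked in the paper after Theorem~\ref{th-DGreen}.
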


\begin{proof}[Alternative proof]
By Lemma \ref{lemma-criticality-green}, $\inte\, M$ supports a minimal positive Green's kernel $G^{\inte\, M}(x,y)$.  This function extends continuously to $\partial M$ with zero boundary values, see e.g. \cite[Chapter 8, Theorem 1.1]{Pinsky}. By comparison along an exhausting sequence $\{\Omega_{k}\}$ of $M$ we deduce that
\[
\G^{\Omega_{k}}(x,y) \leq G^{\inte\, M}(x,y)\, \text{ on } \Omega_{k}
\]
and, therefore,
 $\G^{M}(x,y)= \lim_{k\to+\infty} \G^{\Omega_{k}}(x,y)$ exits and satisfies
\[
\G^{M}(x,y) \leq G^{\inte\, M}(x,y).
\]
On the other hand, using  a comparison argument on an exhausting sequence $\{D_{k}\}$ of $\inte\, M$ we have that
\[
G^{\inte\, M}(x,y)  = \lim_{k\to+\infty}G^{D_{k}}(x,y)\leq \G^{M}(x,y).
\]
This completes the proof.
\end{proof}
Once it is realized that $\inte\, M$ is a \textit{Greenian domain} with respect to the Laplace operator or, equivalently, that $-\Delta$ is subcritical on $\inte\, M$, we can apply the machinery of the Martin compactification to give an interpretation of the $\D$-parabolicity in terms of properties of certain Martin kernels. Indeed, recall from \cite{It}, \cite{Mu1} and \cite[Chapter 8, Theorem 1.4, Corollary 1.6]{Pinsky} that the Martin boundary $\pma M$ of $\inte\, M$ decomposes as
\[
\pma M = \partial M\, \dot\cup\, \pma' M
\]
where:\smallskip

\noindent - $\partial M$ is the part  of $\pma M$ corresponding to sequences $\{x_{k}\} \subset \inte\, M$ that converge to some point of $\partial M$;\smallskip

\noindent - $\pma' M$ is the part of $\pma M$ corresponding to sequences $\{y_{k}\}\subset \inte\, M$ without accumulation points in $\partial M$;\smallskip

\noindent - $\partial M$ embeds into the minimal Martin boundary $\pma^{0}M$;\smallskip

\noindent - $\pma' M \not= \emptyset$ and $\pma' M \cap \pma^{0} M \not=\emptyset$ provided that the cone
\[
\CH^{0}_{-\Delta}(\inte\, M) = \left\lbrace
u \in C^{\infty}(\inte\, M)\cap C^{0}(M) :
\begin{array}{rrl}
\Delta u =0 & \text{in} \ \ \inte\, M;\\
u>0 & \text{in} \ \ \inte\, M;\\
u=0 & \text{on} \ \ \partial M;
\end{array}
\right\rbrace
\]
is nonempty\footnote{Indeed, given $u \in \CH^{0}_{-\Delta}(\inte\, M)$, the corresponding Martin measure $d\mu_{u}(\zeta)$ is supported in $\pma^{0}M \cap \pma' M$.};\smallskip

\noindent - for every $\zeta \in \pma' M \not=\emptyset$ the corresponding Martin kernel $k(x,\zeta)$ with pole at $\zeta$ is a positive harmonic function on $\inte\, M$ that extends continuously to $\partial M$ with zero boundary values.\smallskip

In case $\pma' M=\emptyset$, as it happens e.g. if $M$ is compact, we agree to define $k(x,\zeta) \equiv +\infty$. Note that, in this case,
\[
\CH^{0}_{-\Delta}(\inte\, M) = \emptyset
\]
thanks to the version of the Martin representation formula that can be found e.g. in  \cite[Chapter 8, Corollary 1.6]{Pinsky}.  In the next Lemma, we point out that, actually, the existence of bounded functions in  $\CH^{0}_{-\Delta}(\inte\, M)$ characterizes the $\D$-hyperbolicity of a manifold with boundary.
\begin{lemma}\label{lemma-criticality-positive}
 Let $(M,g)$ be a Riemannian manifold with smooth boundary $\partial M \not= \emptyset$. Then, $M$ is $\D$-parabolic if and only if $\CH^{0}_{-\Delta}(\inte\, M) \cap L^{\infty}(M) = \emptyset$.
\end{lemma}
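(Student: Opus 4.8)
The plan is to prove the two implications separately: the forward one ($\D$-parabolic $\Rightarrow$ emptiness) is essentially a tautology, while the reverse one is obtained by contraposition and is a repackaging of the construction already carried out in the proof of Proposition~\ref{DirParSubharmonic}, upgraded by a single application of the strong minimum principle. Throughout, $M$ (hence $\inte\, M$) is assumed connected, as is implicit in the notion of $\D$-parabolicity.

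First, if $M$ is $\D$-parabolic and $v \in \CH^{0}_{-\Delta}(\inte\, M)\cap L^{\infty}(M)$, then $v$ is a bounded function in $C^{\infty}(\inte\, M)\cap C^{0}(M)$ with $\Delta v = 0$ in $\inte\, M$ and $v = 0$ on $\partial M$; by the very definition of $\D$-parabolicity $v \equiv 0$, contradicting $v > 0$ in $\inte\, M$. Hence $\CH^{0}_{-\Delta}(\inte\, M)\cap L^{\infty}(M) = \emptyset$. (Equivalently, this follows from Proposition~\ref{DirParSubharmonic}(iii), since a nonzero such $v$ would be a bounded subsolution with $\sup_{M} v > 0 = \sup_{\partial M} v$.)

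For the converse I would argue by contraposition: assuming $M$ is \emph{not} $\D$-parabolic, I must produce a bounded positive harmonic function on $\inte\, M$ vanishing on $\partial M$. By hypothesis there is a nonzero bounded $u \in C^{\infty}(\inte\, M)\cap C^{0}(M)$ with $\Delta u = 0$ in $\inte\, M$ and $u = 0$ on $\partial M$; replacing $u$ by $-u$ if necessary, we may assume $\sup_{M} u > 0$. Since $u$ and the constant $0$ are harmonic, their maximum is subharmonic, so
\[
\tilde u := \frac{\max\{u,0\}}{\sup_{M} |u|} \in C^{0}(M)\cap W^{1,2}_{loc}(\inte\, M)\cap L^{\infty}(M)
\]
is a nonnegative weak subsolution of $\Delta \tilde u \geq 0$ in $\inte\, M$, vanishes on $\partial M$, and satisfies $0 < \sup_{M}\tilde u \leq 1$ --- precisely the function appearing in the proof of Proposition~\ref{DirParSubharmonic}. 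As recalled there, the exhaustion/reduction procedure of \cite[p.~157]{Grigoryan_Analytic} (or the balayage of \cite[Theorem 4.3.2]{H} applied to $-\tilde u$, with boundary barriers forcing the zero boundary values) yields a harmonic $v \in C^{\infty}(\inte\, M)\cap C^{0}(M)\cap L^{\infty}(M)$ with $v \geq \tilde u \geq 0$ on $M$ and $v = 0$ on $\partial M$.

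It remains to observe that $v$ is strictly positive in $\inte\, M$. Since $v \geq \tilde u$ and $\sup_{M}\tilde u > 0$, we have $\sup_{M} v > 0$, so $v \not\equiv 0$; on the other hand $v \geq 0$ and $\Delta v = 0$ on the connected manifold $\inte\, M$, so if $v$ vanished at an interior point it would attain its minimum there and hence be constant --- impossible. Therefore $v > 0$ throughout $\inte\, M$, i.e. $v \in \CH^{0}_{-\Delta}(\inte\, M)\cap L^{\infty}(M) \ne \emptyset$, completing the contrapositive. The only genuinely delicate step is the existence of the reduced function $v$, but this is exactly the construction already invoked for Proposition~\ref{DirParSubharmonic}; everything else is bookkeeping plus the strong minimum principle. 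One could alternatively deduce the statement from the Martin-boundary description preceding the Lemma, noting that $\CH^{0}_{-\Delta}(\inte\, M) \ne \emptyset$ forces $\pma' M \cap \pma^{0}M \ne \emptyset$ and that the $L^{\infty}$ refinement is what distinguishes $\D$-parabolicity from $\D$-hyperbolicity, but the direct argument above is more self-contained.
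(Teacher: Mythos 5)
Your proposal is correct and follows essentially the same route as the paper: the forward direction is immediate from the definition, and the converse is obtained by truncating a nontrivial bounded harmonic function vanishing on $\partial M$ to a nonnegative bounded subharmonic one and invoking the reduction construction from the proof of Proposition~\ref{DirParSubharmonic}, followed by the strong maximum principle to get strict positivity. The only cosmetic difference is that the paper truncates at level $\epsilon$, i.e.\ uses $(u-\epsilon)_{+}$, rather than $\max\{u,0\}$, which is immaterial to the argument.
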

\begin{proof}
We need only to show that $\CH^{0}_{-\Delta}(\inte\, M) \cap L^{\infty}(M) = \emptyset$ implies $\D$-parabolicity. This follows from the proof of Proposition \ref{DirParSubharmonic}. Indeed, let $u \in C^{\infty}(\inte\, M) \cap C^{0}(M)$ be a nontrivial bounded harmonic function satisfying $u=0$ on $\partial M$. Up to replacing $u$ with $-u$ we can suppose that $u_{\epsilon}(x) := (u-\epsilon)_{+}(x)  \not\equiv 0$ for some $0<\epsilon \ll1$. Since $u_{\epsilon} \geq 0$ is a bounded subharmonic function satisfying $ u_{\epsilon}=0$ on $\partial M$,
arguing as in the proof of  Proposition~\ref{DirParSubharmonic}
we get a bounded harmonic function $v$ such that $v=0$ on $\partial M$ and $0 \leq u_{\epsilon} \leq v$. In particular $v \geq 0$ is nontrivial and, hence, by the maximum principle $v>0$ on $\inte\, M$. This proves that $v \in \CH^{0}_{-\Delta}(\inte\, M)\cap L^{\infty}(M) \not=\emptyset$.
\end{proof}
With this preparation, we have the following interpretation of  the $\D$-parabolicity.
\begin{theorem}
 The Riemannian manifold $(M,g)$ with smooth boundary $\partial M \not=\emptyset$. If $M$ is $\D$-parabolic then
for every $\zeta \in \pma' M$, $k(x,\zeta)$ is unbounded. If $ \pma' M$ is at most countable, and $k(x,\zeta)$ is unbounded for every $\zeta \in \pma' M$, then $M$ is $\D$-parabolic.
\end{theorem}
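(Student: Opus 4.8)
The plan is to deduce both implications from Lemma~\ref{lemma-criticality-positive}, which identifies the $\D$-parabolicity of $M$ with the absence of bounded functions in the cone $\CH^{0}_{-\Delta}(\inte\, M)$, together with the Martin representation formula on the Greenian domain $\inte\, M$ (Lemma~\ref{lemma-criticality-green}). The degenerate case $\pma' M=\emptyset$ is handled first and trivially: by the remark following the convention $k(x,\zeta)\equiv+\infty$ one has $\CH^{0}_{-\Delta}(\inte\, M)=\emptyset$, so $M$ is $\D$-parabolic and, both the ``for every $\zeta\in\pma' M$'' statements being vacuous, there is nothing to prove. Hence from now on assume $\pma' M\not=\emptyset$.

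For the first implication, fix $\zeta\in\pma' M$. As recalled just before the statement, the Martin kernel $k(\cdot,\zeta)$ is a positive harmonic function on $\inte\, M$ that extends continuously to $\partial M$ with vanishing boundary values; being harmonic it is smooth on $\inte\, M$, so $k(\cdot,\zeta)\in\CH^{0}_{-\Delta}(\inte\, M)$. If $M$ is $\D$-parabolic, Lemma~\ref{lemma-criticality-positive} gives $\CH^{0}_{-\Delta}(\inte\, M)\cap L^{\infty}(M)=\emptyset$, hence $k(\cdot,\zeta)\notin L^{\infty}(M)$, that is, $k(x,\zeta)$ is unbounded.

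For the converse, suppose $M$ is \emph{not} $\D$-parabolic. By Lemma~\ref{lemma-criticality-positive} there exists a bounded $u\in\CH^{0}_{-\Delta}(\inte\, M)$, i.e. a bounded positive harmonic function on $\inte\, M$, continuous on $M$ and vanishing on $\partial M$. Since $\inte\, M$ is Greenian, the Martin representation formula (\cite[Chapter 8, Corollary 1.6]{Pinsky}) provides a finite positive Radon measure $\mu_{u}$ on the minimal Martin boundary $\pma^{0}M$, normalized by a reference point $x_{0}$, with $u(x)=\int_{\pma^{0}M}k(x,\zeta)\,d\mu_{u}(\zeta)$ and $\mu_{u}(\pma^{0}M)=u(x_{0})>0$. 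Because $u=0$ on $\partial M$ and $\partial M$ embeds into $\pma^{0}M$, the measure $\mu_{u}$ is supported in $\pma^{0}M\cap\pma' M$ (this is exactly the content of the footnote above; see also \cite{It, Mu1}). By hypothesis $\pma' M$ is at most countable, so $\pma^{0}M\cap\pma' M=\{\zeta_{j}\}_{j}$ is at most countable and the finite measure $\mu_{u}$ decomposes as $\mu_{u}=\sum_{j}c_{j}\delta_{\zeta_{j}}$ with $c_{j}\geq0$ and $\sum_{j}c_{j}=u(x_{0})>0$; in particular $c_{j_{0}}>0$ for some index $j_{0}$. Since all terms are nonnegative, $0\leq c_{j_{0}}k(x,\zeta_{j_{0}})\leq u(x)\leq\|u\|_{L^{\infty}(M)}$ for every $x\in\inte\, M$, so $k(\cdot,\zeta_{j_{0}})$ is bounded, contradicting the assumption that $k(x,\zeta)$ is unbounded for every $\zeta\in\pma' M$. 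Therefore $M$ is $\D$-parabolic.

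The step I expect to be the crux is the localization of the Martin measure: one must use that a bounded positive harmonic function vanishing continuously on $\partial M$ cannot charge any minimal Martin point lying over $\partial M$, since the corresponding kernels blow up there. This is precisely what is recorded in the footnote and in \cite{It, Mu1, Pinsky}, so no new argument is required; the remaining ingredients — the Martin representation formula, the fact that a finite measure carried by an at most countable set is a sum of Dirac masses, and the termwise domination by $u$ — are routine.
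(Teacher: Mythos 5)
Your proposal is correct and follows essentially the same route as the paper: the first implication is the (trivial) observation that a bounded Martin kernel with pole in $\pma' M$ would be a bounded positive harmonic function vanishing on $\partial M$, contradicting $\D$-parabolicity, and the converse uses Lemma~\ref{lemma-criticality-positive} plus the Martin representation formula with measure supported on $\pma' M$, countability forcing an atom, and termwise domination to produce a bounded kernel. The only differences are cosmetic (contrapositive phrasing and an explicit treatment of the degenerate case $\pma' M=\emptyset$), so no further changes are needed.
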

\begin{proof}
Assume first that $M$ is $\D$-parabolic. Then, for each $\zeta \in \pma' M$, the corresponding Martin kernel $k(x,\zeta)$ must be unbounded for, otherwise, we would have $k(x,\zeta) \equiv 0$ on $\inte\, M$.\\
Conversely, assume that $\pma' M$  is at most countable and that the Martin kernels $k(x,\zeta)$ with poles at $\pma' M$ are unbounded.
By Lemma \ref{lemma-criticality-positive} we just need to prove that $\CH^{0}_{-\Delta}(\inte\, M) \cap L^{\infty}(M) = \emptyset$.
By contradiction, let $u \in \CH^{0}_{-\Delta}(\inte\, M) \cap L^{\infty}(M)$. By the representation formula (see, e.g. \cite[Chapter 8, Theorem 1.4, Corollary 1.6]{Pinsky})
\[
u(x)=\int_{\pma' M} k(x,\zeta) d\mu_u(\zeta),
\]
where $\mu_u$ is a probability measure supported on  $\pma' M$. Since the latter is at most countable, there exists $\zeta_o\in \pma' M$ such that $\mu_u(\{\zeta_o\})>0$. It follows that
\[
\mu_u(\{\zeta_o\}) k(x,\zeta_o)= \int_{\{\zeta_o\}}  k(x,\zeta) d\mu_u(\zeta) \leq u(x),
\]
on $M$ against the assumption that all Martin kernels with poles on $\pma' M$ are unbounded.
\end{proof}
\begin{remark}\label{KernelsVsParabolicity}{\rm
In the general case, the unboundedness of the Martin kernels $k(x,\zeta)$ for every $\zeta \in \pma' M$ does not seem to imply the $\D$-parabolicity of $M$. We are indebted to B. Devyver for having pointed out to us the following counterexample: Let  $B_2(0)\setminus B_1(0)\subset \mathbb{R}^2$ which we view as an (incomplete) manifold with boundary $\partial M=\partial B_1(0)$ and  Martin boundary $\partial B_2(0)\cup \partial B_1(0)$. According to \cite[Theorem 8.2.2]{K}, the Poisson (=Martin) kernels with poles at $\zeta\in \partial B_2(0)$ are bounded from below by $C(2-|x|)/|x-\zeta|^2$ and are therefore unbounded. Nevertheless $u=\log |x|$ is a bounded positive harmonic function on $M$ vanishing on $\partial M=\partial B_1(0)$.
}
\end{remark}

The theory we have developed so far in this section relies on the subcriticality of the Laplace-Beltrami operator (with Dirichlet boundary conditions) and on the corresponding Martin theory. Since these theories have been developed in the setting of general Schr\"odinger operators $P = -\Delta + V$ and, in the recent paper \cite{PiSa},  they have been extended to general mixed boundary conditions, it is natural to inquire whether the theory of $\D$-parabolic manifolds can be extended accordingly. Although, formally, the definition of $\D$-parabolic manifold, as well as the subsequent $L^{1}$-Liouville theory, seem to survive in the more general framework of subcritical operators with mixed boundary conditions, we feel that some work still has to be done. For instance, the criticality theory of \cite{PiSa} seems to include both the Dirichlet and the Neumann case but we know that the corresponding notions of parabolicity are distinct and subordinated. A unified theory that includes both would be important even at the conceptual level, and we aim at investigating this fascinating possibility in a separate article.

\section{Dirichlet $L^1$-Liouville property}\label{section-DL1Liouville}

In this section we  introduce the notion of Dirichlet $L^1$-Liouville property for a smooth manifold $M$ with nonempty boundary $\partial M$ and show how this property can be useful when one tries to find localized sufficient conditions to guarantee the global $L^1$-Liouville property.

\begin{definition}
Let $M$ be a smooth manifold with nonempty boundary $\partial M$. We say that $M$ is Dirichlet $L^1$-Liouville (shortly, $M$ is $\D$-$L^1$-Liouville) if every nonnegative $L^1$ solution of
\[
\left\lbrace
\begin{array}{rl}
\Delta u  \leq 0 & \text{in} \quad \inte\, M \\
u  = 0 & \text{on} \quad \partial M,
\end{array}
\right.
\]
vanishes identically on $M$.
\end{definition}

When the smooth manifold $M$ does not have  a boundary A. Grigor'yan \cite{Grigoryan_stochastic_harmonic} shows that the validity  of the $L^1$-Liouville property is equivalent to the nonintegrability  of the Green's kernel $G$ of $M$. A similar characterization holds also for the $\D$-$L^1$-Liouville property in a smooth manifold with nonempty boundary up to using the Dirichlet Green's kernel ${}^{\D}\!G$. In order to state precisely this result we give the next

\begin{definition}
The Dirichlet mean exit time ${}^{\mathcal{D}}\!E$  of $M$ is defined as the minimal positive solution of
\begin{equation}\label{DirichletMeanExitTime}
\left\lbrace
\begin{array}{cl}
\Delta {}^{\mathcal{D}}\!E = -1 & \text{in} \quad \inte\, M \\
{}^{\mathcal{D}}\!E=0 & \text{on} \quad \partial M.
\end{array}
\right.
\end{equation}
\end{definition}

As in the case of manifolds without boundary, the Dirichlet mean exit time  ${}^{\mathcal D}\!E$ can be constructed by an exhaustion procedure. Let $\{\Omega_k\}$ be an increasing exhaustion of $M$ by relatively compact (in $M$) open sets with smooth boundary $\overline{\partial_0 \Omega_k} $ which intersect transversally $\partial M$, and for every $k$ let ${}^{\mathcal{D}}\!E^{\Omega_k}$ be the solution of
\begin{equation}\label{E_k_Omega}
\Delta{}^{\mathcal{D}}\!E^{\Omega_k}=-1 \,\,\text{ in }\,\, \inte\,\Omega_k,\qquad {}^{\mathcal{D}}\!E^{\Omega_k}=0\,\,\text{ on }\,\, \partial\Omega_k.
\end{equation}
Then the sequence ${}^{\mathcal{D}}\!E^{\Omega_k}$ either diverges at every point in $\inte\,M$ or it converges monotonically to a function ${}^{\mathcal{D}}\!E$ which is clearly the minimal solution of \eqref{DirichletMeanExitTime}.

The above construction and the comparison principle yield the
following lemma, which will be useful in Section~\ref{StochVsL1} below.

\begin{lemma}
\label{UpBndMeanExitTime}
Let $M$ be a manifold with boundary $\partial M$, and let $E:M\to \rr$ satisfy
$\Delta E\leq -1$ in $\inte\, M$, $E\geq 0$ on $M$. Then ${}^{\mathcal{D}}\!E$ is finite and bounded above by $E$.
\end{lemma}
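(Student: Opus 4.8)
The plan is to prove Lemma~\ref{UpBndMeanExitTime} directly from the exhaustion construction of ${}^{\mathcal D}\!E$ combined with the weak maximum principle on each relatively compact piece. First I would fix an increasing exhaustion $\{\Omega_k\}$ of $M$ by relatively compact open sets with smooth Dirichlet boundary $\overline{\partial_0\Omega_k}$ meeting $\partial M$ transversally, and for each $k$ let ${}^{\mathcal D}\!E^{\Omega_k}$ solve \eqref{E_k_Omega}, so that ${}^{\mathcal D}\!E^{\Omega_k}\nearrow {}^{\mathcal D}\!E$ either to a finite limit or to $+\infty$ everywhere on $\inte\,M$. The key point is to show that each ${}^{\mathcal D}\!E^{\Omega_k}\leq E$ on $\Omega_k$; passing to the limit then gives simultaneously that the limit is finite (hence a genuine function, and by the discussion preceding the lemma it is the minimal solution of \eqref{DirichletMeanExitTime}) and that ${}^{\mathcal D}\!E\leq E$ on $M$.

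To prove the comparison ${}^{\mathcal D}\!E^{\Omega_k}\leq E$ on $\Omega_k$, I would set $w = {}^{\mathcal D}\!E^{\Omega_k} - E$ on $\bar\Omega_k$. Then $\Delta w = -1 - \Delta E \geq -1-(-1) = 0$ in $\inte\,\Omega_k$, so $w$ is subharmonic (in the appropriate weak sense, according to the regularity of $E$). On the Dirichlet portion $\partial_0\Omega_k \subset \inte\,M$ one has ${}^{\mathcal D}\!E^{\Omega_k} = 0 \leq E$, so $w\leq 0$ there. On the Neumann portion $\partial_1\Omega_k \subset \partial M$ one has again ${}^{\mathcal D}\!E^{\Omega_k}=0$ while $E\geq 0$, so $w\leq 0$ there too; hence $w\leq 0$ on all of $\partial\Omega_k = \partial_0\Omega_k\cup\partial_1\Omega_k$. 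Since $\Omega_k$ is relatively compact, the weak maximum principle gives $w\leq 0$ on $\bar\Omega_k$, i.e. ${}^{\mathcal D}\!E^{\Omega_k}\leq E$ on $\Omega_k$, as desired. Letting $k\to+\infty$ we conclude ${}^{\mathcal D}\!E = \lim_k {}^{\mathcal D}\!E^{\Omega_k}\leq E < +\infty$ pointwise on $\inte\,M$; since the sequence is monotone and locally uniformly bounded, the limit is a smooth solution of $\Delta{}^{\mathcal D}\!E=-1$ in $\inte\,M$ which, by the Schauder boundary estimates already invoked for $\Omega_k$ in the Green's-kernel construction, extends continuously to $\partial M$ with zero boundary values. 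Finally ${}^{\mathcal D}\!E\leq E$ on $M$ follows since the inequality holds on $\inte\,M$ and both sides vanish (resp. are $\geq 0$) on $\partial M$.

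The main obstacle I anticipate is purely technical: handling the boundary regularity of ${}^{\mathcal D}\!E^{\Omega_k}$ near the corner $\overline{\partial_0\Omega_k}\cap\partial M$, exactly as in the discussion of $v_k$ after \eqref{D-parab-exhaustion}, where the solution is smooth away from $\partial_1\Omega_k\cap\overline{\partial_0\Omega_k}$ and only continuous there; and, if $E$ is merely assumed to satisfy $\Delta E\leq -1$ in the weak sense, making sure the maximum principle is applied in a form valid for weak (super)solutions on a Lipschitz domain. Neither is a genuine difficulty here — the transversality of the exhaustion keeps the corners mild, and the weak maximum principle for $W^{1,2}_{loc}$ subsolutions on relatively compact domains, with boundary data understood via $\limsup$, is standard and already used repeatedly in Section~\ref{Sect_PotDir}; so the argument above goes through essentially verbatim. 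One may alternatively phrase the whole comparison in terms of the Dirichlet Green's kernel, writing ${}^{\mathcal D}\!E^{\Omega_k}(x)=\int_{\Omega_k}{}^{\mathcal D}\!G^{\Omega_k}(x,y)\,d\vol(y)$ and using monotonicity of ${}^{\mathcal D}\!G^{\Omega_k}$ in $k$, but the barrier argument with $w={}^{\mathcal D}\!E^{\Omega_k}-E$ is more direct and needs less machinery.
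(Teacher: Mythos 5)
Your argument is correct and is exactly the route the paper intends: the paper derives Lemma~\ref{UpBndMeanExitTime} directly from the exhaustion construction of ${}^{\mathcal D}\!E$ via \eqref{E_k_Omega} together with the comparison principle on each relatively compact $\Omega_k$, which is precisely your comparison $w={}^{\mathcal D}\!E^{\Omega_k}-E\leq 0$ followed by the monotone limit. The technical points you flag (corner regularity at $\overline{\partial_0\Omega_k}\cap\partial M$, weak formulation of $\Delta E\leq -1$) are handled as you suggest, e.g.\ by using an exhaustion relatively compact in $\inte\,M$ as in Remark~\ref{exhaustion_int_mean_exit_time}.
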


The relation between the Dirichlet mean exit time and the $\D$-$L^1$-Liouville property is given by the following version of Grigor'yan's result alluded to above.
\begin{theorem}\label{thm_D_L_1}
Let $M$ be a manifold with boundary $\partial M$. The following are equivalent:
\begin{enumerate}
\item[i)] $M$ is not $\D$-$L^1$-Liouville;
\item[ii)] The Dirichlet Green's kernel ${}^{\mathcal{D}}\!G(x,\cdot) $ is in $L^1(M)$ for any $ x \in M$;
\item[iii)] The Dirichlet mean exit time ${}^{\mathcal{D}}\!E$ is  finite everywhere and given  by
\[
{}^{\mathcal{D}}\!E(x) = \int_M {}^{\mathcal{D}}\!G(x,y)dy.\]

\end{enumerate}
\end{theorem}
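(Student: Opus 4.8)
The plan is to route the whole equivalence through the unconditional identity
\[
{}^{\mathcal{D}}\!E(x)=\int_M{}^{\mathcal{D}}\!G(x,y)\,dy\in[0,+\infty],\qquad x\in M,
\]
which makes (ii)$\,\Leftrightarrow\,$(iii) automatic and reduces everything to relating the finiteness of ${}^{\mathcal{D}}\!E$ to the failure of the $\mathcal{D}$-$L^1$-Liouville property. Fix a relatively compact exhaustion $\{\Omega_k\}$ of $M$ as in the constructions of ${}^{\mathcal{D}}\!G$ and ${}^{\mathcal{D}}\!E$. On each $\Omega_k$ one has the standard identity ${}^{\mathcal{D}}\!E^{\Omega_k}(x)=\int_{\Omega_k}{}^{\mathcal{D}}\!G^{\Omega_k}(x,y)\,dy$: the right-hand side solves $\Delta w=-1$ in $\inte\,\Omega_k$ with zero boundary data — integrate $\Delta_x{}^{\mathcal{D}}\!G^{\Omega_k}(x,y)=-\delta_y(x)$ in $y$ and use the symmetry ${}^{\mathcal{D}}\!G^{\Omega_k}(x,y)={}^{\mathcal{D}}\!G^{\Omega_k}(y,x)$ — so it coincides with ${}^{\mathcal{D}}\!E^{\Omega_k}$ by uniqueness. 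Letting $k\to\infty$ and using ${}^{\mathcal{D}}\!G^{\Omega_k}\nearrow{}^{\mathcal{D}}\!G$, ${}^{\mathcal{D}}\!E^{\Omega_k}\nearrow{}^{\mathcal{D}}\!E$ together with the monotone convergence theorem yields the displayed identity (for $x\in\partial M$ both sides vanish). Since, as observed after~\eqref{E_k_Omega}, ${}^{\mathcal{D}}\!E$ is either finite throughout $\inte\,M$ or identically $+\infty$, conditions (ii) and (iii) both amount to the finiteness of ${}^{\mathcal{D}}\!E$ and are equivalent. Finally, (ii)$\,\Rightarrow\,$(i) is immediate: for any $o\in\inte\,M$, $y\mapsto{}^{\mathcal{D}}\!G(o,y)$ is nonnegative and positive on $\inte\,M$ (hence nontrivial), belongs to $L^1(M)$ by (ii), vanishes on $\partial M$ by Theorem~\ref{th-DGreen}, and satisfies $\Delta_y{}^{\mathcal{D}}\!G(o,y)=-\delta_o\le 0$ distributionally, so it witnesses the failure of the $\mathcal{D}$-$L^1$-Liouville property.

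For the substantial implication (i)$\,\Rightarrow\,$(ii), let $u\ge 0$ be a nontrivial $L^1(M)$ superharmonic function on $\inte\,M$ with $u=0$ on $\partial M$, and let $\nu_u=-\Delta u\ge 0$ be its Riesz measure. The crux is the key estimate
\[
\int_{\inte\,M}{}^{\mathcal{D}}\!E(y)\,d\nu_u(y)\ \le\ \int_M u\,dy\ <\ +\infty.
\]
To prove it, fix a smooth exhaustion $\{D_j\}\nearrow\inte\,M$ by relatively compact domains and, on each $D_j$, use the Riesz decomposition $u=h_j+p_j$, where $h_j\ge 0$ is harmonic and $p_j(x)=\int_{D_j}G^{D_j}(x,y)\,d\nu_u(y)\ge 0$ is the Green potential of $\nu_u$ in $D_j$; since $h_j\ge 0$ we get $p_j\le u$ on $D_j$. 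Integrating over $D_j$, interchanging integrals by Tonelli, and using that $\int_{D_j}G^{D_j}(x,y)\,dx$ equals the mean exit time $E^{D_j}(y)$ (the $D_j$-analogue of the first-step identity) gives $\int_{D_j}E^{D_j}(y)\,d\nu_u(y)\le\int_M u$; letting $j\to\infty$, so that $E^{D_j}\nearrow{}^{\mathcal{D}}\!E$ on $\inte\,M$ (by Theorem~\ref{subcriticality} and the first step), yields the key estimate via monotone convergence. If $\nu_u\not\equiv 0$, then ${}^{\mathcal{D}}\!E<+\infty$ on a set of positive $\nu_u$-measure, hence everywhere by the dichotomy, i.e.\ (ii) holds.

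If instead $\nu_u\equiv 0$, then $u$ is harmonic, and — being nonnegative, nontrivial and vanishing on $\partial M\ne\emptyset$ — strictly positive and nonconstant on $\inte\,M$ by the strong maximum principle. The device is to replace $u$ by a genuinely superharmonic companion $v$ that is still nonnegative, belongs to $L^1(M)$, vanishes on $\partial M$, and has $\nu_v\not\equiv 0$; the key estimate applied to $v$ then forces ${}^{\mathcal{D}}\!E<+\infty$, exactly as above. If $u$ is unbounded, rescale so that $\{u<1\}\ne\emptyset$ and take $v=\min(u,1)$: it is superharmonic, $0\le v\le u$, $v=0$ on $\partial M$, and $\nu_v\ne 0$ — otherwise $v-u=\min(u-1,0)\le 0$ would be harmonic and attain its maximum value $0$ on the open set $\{u<1\}$, forcing $u\le 1$ by the strong maximum principle. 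If $u$ is bounded, take $v=u\,(\|u\|_{L^\infty(M)}-u)$: then $0\le v\le\|u\|_{L^\infty(M)}\,u\in L^1(M)$, $v=0$ on $\partial M$, and $-\Delta v=2|\nabla u|^2\not\equiv 0$ since $u$ is nonconstant. This exhausts all cases, so (i)$\,\Rightarrow\,$(ii); together with (ii)$\,\Leftrightarrow\,$(iii) and (ii)$\,\Rightarrow\,$(i), the three statements are equivalent.

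The single delicate point is precisely the harmonic case $\nu_u\equiv 0$ of (i)$\,\Rightarrow\,$(ii): there $\nu_u$ carries no mass and the key estimate is vacuous for $u$ itself, so one must first manufacture a strictly superharmonic object from $u$ — the truncation $\min(u,1)$ or the ``logistic'' product $u(\|u\|_{L^\infty}-u)$ — taking care to preserve global integrability and the vanishing on $\partial M$, and to check that the resulting Riesz measure is not identically zero. Everything else — the symmetry of the relatively compact Green kernels, the Riesz decompositions on the $D_j$, the Tonelli interchanges, and the monotone limits — is routine.
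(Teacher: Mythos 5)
Your proof is correct, but the substantial implication (i)$\Rightarrow$(ii) follows a genuinely different route from the paper's. The paper argues directly: since a witness $u$ of non-$\D$-$L^1$-Liouville is strictly positive in $\inte\,M$ by the minimum principle, one fixes a compact $V\ni x$, chooses $C$ with ${}^{\D}\!G(x,\cdot)\le Cu$ on $\partial V$, propagates this bound to $\Omega_k\setminus V$ by comparison along the exhaustion (the kernels ${}^{\D}\!G^{\Omega_k}(x,\cdot)$ are harmonic off $V$ and vanish on $\partial\Omega_k$), and integrates; no Riesz decomposition and no case distinction are needed, and the harmonic case is handled automatically. You instead pass through the Riesz measure $\nu_u=-\Delta u$ and the estimate $\int{}^{\D}\!E\,d\nu_u\le\|u\|_{L^1}$, which forces a separate treatment when $u$ is harmonic; your surgery there (truncation when $u$ is unbounded, the product $u(\|u\|_\infty-u)$ when bounded) is sound, though note the single truncation $v=\min(u,\epsilon)$ with $0<\epsilon<\sup_M u$ would handle both cases at once, and your formula $v-u=\min(u-1,0)$ has a sign slip (it should be $-(u-1)_+$, i.e.\ $\min(1-u,0)$), which does not affect the argument since $v-u$ does vanish on $\{u<1\}$ as you claim. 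Your route has the merit of producing the quantitative inequality $\int_{\inte\,M}{}^{\D}\!E\,d\nu_u\le\int_M u$, of independent interest, but it relies on more machinery (Riesz decomposition on the $D_j$, symmetry of the compact Green kernels, the everywhere-or-nowhere dichotomy for ${}^{\D}\!E$, and implicitly connectedness of $\inte\,M$), whereas the paper's comparison argument is shorter and uniform. Your treatment of (ii)$\Leftrightarrow$(iii) coincides with the paper's; for (ii)$\Rightarrow$(i) the paper prefers the bounded witness $\min\{{}^{\D}\!G(x,\cdot),1\}$ rather than the kernel itself, which sidesteps any quibble about the regularity class allowed in the definition (the kernel fails to be in $W^{1,2}_{loc}$ near the pole in higher dimensions), so you should truncate there as well.
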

\begin{proof}
The proof follows closely Grigor'yan's arguments in \cite{Grigoryan_stochastic_harmonic}.\\
$i) \Rightarrow ii)$ Let $u$ be a nonnegative function violating the $\D$-$L^1$-Liouville property. By the minimum principle $u$ is strictly positive in $\inte\, M$. Fix a
compact set $V \Subset \inte\, M$, a point $x\in V$ and let $C$ be a large constant satisfying
\begin{equation}\label{ineq_G_V}
{}^{\mathcal{D}}\!G(x,y) \leq Cu(y), \quad \forall \, y \in \partial V.
\end{equation}
Taking a smooth relatively compact exhaustion $\{\Omega_{k}\} \nearrow M$ with $V \subset \Omega_1$ and $\overline{\partial_0 \Omega_k}$ intersecting transversally $\partial M$, since $u$ is superharmonic and ${}^{\mathcal{D}}\!G^{\Omega_{k}}(x,\cdot)$ is harmonic on $M \backslash V$, again by the minimum principle, inequality \eqref{ineq_G_V} holds for ${}^{\mathcal{D}}\!G^{\Omega_{k}}(x,\cdot)$ and $u$ on $\Omega_k \backslash V$, and letting $k \rightarrow +\infty$ we get that \eqref{ineq_G_V} holds for all $y \in M \backslash V$. Hence, being ${}^{\mathcal{D}}\!G(x,\cdot)$ locally integrable, we obtain
\begin{eqnarray*}
\int_M {}^{\D}\!G(x,y)\,dy &=& \int_V {}^{\D}\!G(x,y)\,dy + \int_{M\backslash V} {}^{\D}\!G(x,y)\,dy \\
&\leq &\int_V {}^{\D}\!G(x,y)\,dy + C\int_{M\backslash V} u(y)\,dy < + \infty.
\end{eqnarray*}
$ii) \Leftrightarrow iii)$ Let $\Omega_k$ be an exhaustion as above and denote by ${}^{\mathcal{D}}\!G^{\Omega_{k}}$ the corresponding Dirichlet Green's kernel.
The function $\int_{\Omega_k}{}^{\mathcal{D}}\!G^{\Omega_{k}}(x,y)\,dy$ satisfies \eqref{E_k_Omega}, and by the  uniqueness of the Dirichlet problem in compact sets, it coincides with ${}^{\mathcal{D}}\!E^{\Omega_{k}}$. Since ${}^{\mathcal{D}}\!G^{\Omega_{k}}(x,y)\nearrow {}^{\mathcal{D}}\!G(x,y)$, passing to the limit we deduce that
\[
{}^{\mathcal{D}}\!E(x) = \lim_{k\rightarrow +\infty} {}^{\mathcal{D}}\!E^{\Omega_{k}}(x) =
\lim_{k\rightarrow +\infty}\int_M {}^{\mathcal{D}}\!G^{\Omega_{k}}(x,y)\chi_{_{\Omega_k}}dy
 = \int_{M} {}^{\D}\!G(x,y)\,dy.
\]
$ii) \Rightarrow i)$
Assume that ${}^{\D}\!G(x,\cdot) \in L^1(M)$. Then the function
\[
u = \min\{{}^{\D}\!G(x,\cdot), 1\}
\]
is  nonnegative, nonconstant, superharmonic and integrable, thus violating the $\D$-$L^1$-Liouville property.
\end{proof}
\begin{remark}\label{exhaustion_int_mean_exit_time}\rm
Note that since the Dirichlet Green's kernel of a manifold $M$ coincides with the Green's kernel of $\intM$, the Dirichlet mean exit time is, in fact, the mean exit time of $\intM$. This means that ${}^{\mathcal{D}}\!E$ can be constructed by means of an exhaustion  $\{\Omega_k\} $ consisting of open sets with smooth boundary which are relatively compact in $\intM$.
\end{remark}

The first consequence of Theorem \ref{thm_D_L_1} is a relationship between the global $L^1$-Liouville property of a manifold and the $\mathcal{D}$-$L^1$-Liouville property for an open submanifold with smooth boundary.

\begin{corollary}\label{thm_comparison_dirichlet}
Let $(N,h)$ be a manifold with empty boundary and let $M\subset N$ be the closure of a smooth open submanifold. Then $E^N\geq {}^{\mathcal{D}}\!E^M$. In particular, if $M$ is $\mathcal{D}$-$L^1$-Liouville, then $N$ is  $L^1$-Liouville. Moreover, if there exists a sequence of smooth open submanifolds $M_k$ such that ${}^{\mathcal{D}}\!E^{M_k}(\bar x)\to +\infty$ as $k\to \infty$,  then $N$ is $L^1$-Liouville.
\end{corollary}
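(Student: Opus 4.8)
The statement has three parts. First, the inequality $E^{N}\geq {}^{\mathcal{D}}\!E^{M}$ between the (global) mean exit time of $N$ and the Dirichlet mean exit time of $M$. Second, the implication that $M$ being $\D$-$L^{1}$-Liouville forces $N$ to be $L^{1}$-Liouville. Third, the variant in which a sequence of submanifolds with diverging Dirichlet mean exit times at a fixed point again forces $N$ to be $L^{1}$-Liouville.

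For the inequality I would compare the two exit times through their defining exhaustions. Recall from Remark~\ref{exhaustion_int_mean_exit_time} that ${}^{\mathcal{D}}\!E^{M}$ is the mean exit time of $\inte\,M$, hence can be obtained as $\lim_{k}E^{D_{k}}$ where $\{D_{k}\}\nearrow \inte\,M$ is a relatively compact (in $\inte\,M$) smooth exhaustion; likewise $E^{N}=\lim_{k}E^{\Omega_{k}}$ for a relatively compact smooth exhaustion $\{\Omega_{k}\}\nearrow N$. Arranging (by passing to subsequences, or by intersecting) that $D_{k}\Subset \Omega_{k}$, the standard comparison principle for the torsion problem gives $E^{D_{k}}\leq E^{\Omega_{k}}$ on $D_{k}$: indeed $E^{\Omega_{k}}-E^{D_{k}}$ is harmonic on $D_{k}$ with nonnegative boundary values on $\partial D_{k}$, since $E^{\Omega_{k}}\geq 0=E^{D_{k}}$ there. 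Letting $k\to+\infty$ yields ${}^{\mathcal{D}}\!E^{M}\leq E^{N}$ on $\inte\,M$, with the understanding that if $E^{N}\equiv+\infty$ (i.e.\ $N$ is parabolic) the inequality is trivial, while if $E^{N}$ is finite then so is ${}^{\mathcal{D}}\!E^{M}$.

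The first ``In particular'' then follows by contraposition together with Theorem~\ref{thm_D_L_1}. If $N$ is not $L^{1}$-Liouville, then by Grigor'yan's theorem (the boundaryless case of Theorem~\ref{thm_D_L_1}) $E^{N}$ is finite everywhere; by the inequality just proved ${}^{\mathcal{D}}\!E^{M}$ is finite everywhere as well, so by the equivalence i)~$\Leftrightarrow$~iii) of Theorem~\ref{thm_D_L_1} the manifold $M$ fails to be $\D$-$L^{1}$-Liouville. Equivalently, and perhaps more cleanly: since $M\subset N$ is the closure of a smooth open submanifold, ${}^{\mathcal{D}}\!G^{M}\leq G^{N}$ by \eqref{ineq_G_D}, so if ${}^{\mathcal{D}}\!G^{M}(x,\cdot)\in L^{1}(M)$ one would need more care — instead I run the argument via $E$: finiteness of $E^{N}$ propagates to ${}^{\mathcal{D}}\!E^{M}$. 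For the last assertion, if $N$ were \emph{not} $L^{1}$-Liouville then $E^{N}(\bar x)<+\infty$; but ${}^{\mathcal{D}}\!E^{M_{k}}(\bar x)\leq E^{N}(\bar x)$ for every $k$ by the displayed inequality applied to each $M_{k}$, contradicting ${}^{\mathcal{D}}\!E^{M_{k}}(\bar x)\to+\infty$. Hence $N$ is $L^{1}$-Liouville.

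The one point requiring a little attention — the ``main obstacle'', such as it is — is the legitimacy of the exhaustion comparison in the definition of ${}^{\mathcal{D}}\!E^{M}$: one must know that the mean exit time of $\inte\,M$ computed by interior exhaustions coincides with ${}^{\mathcal{D}}\!E^{M}$ computed by exhaustions of $M$ with transversal Dirichlet boundary. This is exactly the content of Remark~\ref{exhaustion_int_mean_exit_time}, itself a consequence of Theorem~\ref{subcriticality} ($G^{\inte\,M}={}^{\mathcal{D}}\!G^{M}$) together with iii) of Theorem~\ref{thm_D_L_1}, so it may be invoked directly. Everything else is the classical maximum/comparison principle for the inhomogeneous equation $\Delta w=-1$ on relatively compact domains and monotone passage to the limit.
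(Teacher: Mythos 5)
Your proposal is correct, and the only real difference from the paper lies in how the inequality $E^{N}\geq {}^{\mathcal D}\!E^{M}$ is obtained. The paper argues at the level of kernels: after assuming without loss of generality that $N$ is nonparabolic (otherwise $N$ is trivially $L^{1}$-Liouville), it invokes the already established comparison \eqref{ineq_G_D}, ${}^{\mathcal D}\!G^{M}\leq G^{N}$, and simply integrates, using the representation ${}^{\mathcal D}\!E(x)=\int_{M}{}^{\mathcal D}\!G(x,y)\,dy$ from Theorem~\ref{thm_D_L_1}~iii) and its boundaryless analogue; the conclusion and the last assertion then follow exactly as in your third paragraph. You instead compare the torsion functions directly on exhaustions, using Remark~\ref{exhaustion_int_mean_exit_time} to compute ${}^{\mathcal D}\!E^{M}$ via interior exhaustions $D_{k}\Subset\inte\,M$ and the elementary fact that $E^{\Omega_{k}}-E^{D_{k}}$ is harmonic with nonnegative boundary values on $\partial D_{k}$. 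Both routes ultimately rest on the same ingredients (the identification of ${}^{\mathcal D}\!E^{M}$ with the mean exit time of $\inte\,M$, monotone limits, and Theorem~\ref{thm_D_L_1}); the paper's version is shorter because the Green's kernel comparison was prepared in advance, while yours has the minor advantage of treating the parabolic case of $N$ uniformly, with no need to split off $E^{N}\equiv+\infty$ as a separate (trivial) case or to invoke the existence of $G^{N}$. Your contrapositive deduction of the ``in particular'' statement — finiteness of $E^{N}$ propagating to ${}^{\mathcal D}\!E^{M}$ and hence, via i)~$\Leftrightarrow$~iii) of Theorem~\ref{thm_D_L_1}, failure of the $\D$-$L^{1}$-Liouville property of $M$ — is a legitimate rephrasing of the paper's one-line appeal to that theorem.
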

\begin{proof}
Without loss of generality we can assume that $N$ is hyperbolic. In this case, $G^N$ exists and satisfies \eqref{ineq_G_D}, that is, $G^N \geq {}^{\D}\!G^{M}$. Hence, integrating yields $E^N\geq {}^{\mathcal{D}}\!E^M$, and the conclusion follows by Theorem \ref{thm_D_L_1}. Similarly, if there exists a sequence $M_k$ containing a fixed point $\bar x$, such that ${}^{\mathcal{D}}\!E^{M_k}(\bar x)\to +\infty$ then $E^N(\bar x)\geq {}^{\mathcal{D}}\!E^{M_k}(\bar x)\to +\infty$ and again $N$ is $L^1$-Liouville.
\end{proof}

\begin{remark}
{\rm
It is worth to observe that the first assertion in the statement of the corollary does not follow immediately from the maximum principle due to the noncompactness of the open submanifold $M$.
}
\end{remark}
As it might be expected, in view of what happens with  stochastic completeness (cf. \cite{Bessa-Bar, BPS-Pota}), the $\D$-$L^1$-Liouville property is an asymptotic property. We are going to justify this claim via two interesting results. To begin with, we show with a very direct argument that manifolds which are isometric outside a compact set share the same behaviour with respect to the $L^{1}$-Liouville property.

\begin{proposition}\label{prop-L1-ends}
Let $M_1$, $M_2$ be two Riemannian manifolds with nonempty boundaries $\partial M_1$ and $\partial M_2$, respectively. Assume that there exists an isometry between $M_1 \backslash K_1$ and $M_2 \backslash K_2$, where $K_1 \subset \inte\,M_1$ and $K_2 \subset \inte\,M_2$ are compact sets. Then, $M_1$ is $\D$-$L^1$-Liouville if and only if so is $M_2$. The same conclusion holds true in the case of manifolds without boundary.
\end{proposition}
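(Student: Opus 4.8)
The plan is to reduce the statement to the criterion provided by Theorem~\ref{thm_D_L_1}, namely that $M$ is $\D$-$L^{1}$-Liouville if and only if the Dirichlet Green's kernel ${}^{\D}\!G^{M}(x,\cdot)$ is \emph{not} integrable on $M$, equivalently the Dirichlet mean exit time ${}^{\D}\!E^{M}$ is identically $+\infty$. By symmetry it suffices to prove one implication: assuming $M_{2}$ is \emph{not} $\D$-$L^{1}$-Liouville, we show $M_{1}$ is not either. So fix a point $o_{1}\in\inte\,M_{1}\setminus K_{1}$, let $o_{2}\in\inte\,M_{2}\setminus K_{2}$ be its image under the given isometry, and assume ${}^{\D}\!E^{M_{2}}(o_{2})<+\infty$, i.e.\ ${}^{\D}\!G^{M_{2}}(o_{2},\cdot)\in L^{1}(M_{2})$.

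The key step is a comparison of the two Dirichlet Green's kernels on the common isometric part. Write $M_{i}\setminus K_{i}$ (which we may harmlessly thicken so that the relevant pole lies well inside it) and denote by $\Omega$ this common piece, regarded as a manifold with boundary inside either $M_{i}$; its boundary consists of a piece of $\partial M_{i}$ together with a compact hypersurface $\partial_{0}\Omega=\partial K_{i}$. By the maximum principle applied to the harmonic function ${}^{\D}\!G^{M_{1}}(o_{1},\cdot)-C\,{}^{\D}\!G^{M_{2}}(o_{2},\cdot)$ on $\Omega\setminus\bar B_{r}(o_{1})$ for a suitable constant $C$ (chosen so the inequality holds on the compact inner boundary $\partial B_{r}(o_{1})\cup\partial K_{1}$, which is possible because both kernels are positive and continuous there and the $M_{2}$-kernel is bounded below by a positive constant on that compact set), and using that both kernels vanish on the Neumann/Dirichlet boundary portion coming from $\partial M_{i}$, one obtains
\[
{}^{\D}\!G^{M_{1}}(o_{1},y)\le C\,{}^{\D}\!G^{M_{2}}(o_{2},y)+\sup_{\partial K_{1}}{}^{\D}\!G^{M_{1}}(o_{1},\cdot)
\qquad\text{for }y\in M_{1}\setminus K_{1}.
\]
Here one must run the usual exhaustion argument: prove the bound for the kernels of relatively compact pieces (where the plain weak maximum principle applies) and pass to the limit, exactly as in the proofs of Theorem~\ref{th-DGreen} and Theorem~\ref{thm_D_L_1}.

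From this pointwise bound, integrability follows by splitting $\int_{M_{1}}{}^{\D}\!G^{M_{1}}(o_{1},y)\,dy$ into the part over the compact set $K_{1}$ (finite by local integrability of the kernel, Theorem~\ref{th-DGreen}) and the part over $M_{1}\setminus K_{1}$, which is controlled by $C\int_{M_{2}\setminus K_{2}}{}^{\D}\!G^{M_{2}}(o_{2},y)\,dy<+\infty$ plus the integral of a bounded function over a set of finite volume (note $M_{1}\setminus K_{1}\cong M_{2}\setminus K_{2}$ has finite volume since ${}^{\D}\!G^{M_{2}}(o_{2},\cdot)$, which is bounded below by a positive constant away from $o_{2}$ on each compact set, is integrable there). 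Hence ${}^{\D}\!G^{M_{1}}(o_{1},\cdot)\in L^{1}(M_{1})$ and, by Theorem~\ref{thm_D_L_1}, $M_{1}$ is not $\D$-$L^{1}$-Liouville. The boundaryless case is identical, replacing ${}^{\D}\!G$ with the ordinary Green's kernel $G$ and invoking Grigor'yan's characterization. The main obstacle I expect is purely technical rather than conceptual: arranging the transversal, relatively compact exhaustions of $M_{i}$ compatibly across the isometry so that the weak maximum principle can be applied on the truncated domains (this is exactly the role of Lemma~\ref{lemma-transversal}), and verifying the finiteness of $\vol(M_{i}\setminus K_{i})$ and the lower bound on the kernel on compacta used to absorb the boundary term.
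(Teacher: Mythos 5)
Your overall route is genuinely different from the paper's and is viable: you reduce to the Green's-kernel characterization of Theorem~\ref{thm_D_L_1} and compare the two Dirichlet kernels on the common isometric piece by an exhaustion plus the weak maximum principle. This is in fact the technique the paper itself uses for the closely related Theorem~\ref{asymptotic_invariance_L1}~$i)$, whereas its proof of Proposition~\ref{prop-L1-ends} is more elementary and avoids kernels altogether: it transplants the nonnegative $L^1$ superharmonic function $u$ from $M_2$ to $M_1\backslash K_1$ via the isometry and extends it across $K_1$ by gluing, setting $v_1=\min\{u\circ\phi,\,c\}$ outside a relatively compact set and $v_1\equiv c$ on it, where $c$ is the minimum of $u\circ\phi$ on a compact annular region; the resulting $v_1$ is superharmonic, integrable, vanishes on $\partial M_1$ and is nonconstant.

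As written, however, your argument contains one false step. To absorb the additive term $\sup_{\partial K_1}{}^{\D}\!G^{M_1}(o_1,\cdot)$ you claim that $M_2\backslash K_2$ has finite volume because ${}^{\D}\!G^{M_2}(o_2,\cdot)$ is integrable and ``bounded below by a positive constant away from $o_2$ on each compact set.'' A positive lower bound on each compact set is not a uniform lower bound at infinity: Dirichlet Green's kernels typically tend to $0$ both at $\partial M$ and at infinity, and integrability of the kernel does not force finite volume. The paper's own Example~\ref{ex_2_L1} gives a counterexample: the model with warping function $g_1(r)=e^{r^3}$ is stochastically incomplete, hence (models being the case where stochastic completeness and the $L^1$-Liouville property coincide) its Green's kernel is integrable, while its volume is clearly infinite. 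Fortunately the flawed step is dispensable, because the additive constant in your displayed inequality is unnecessary: with $C$ chosen exactly as you describe, so that $C\,{}^{\D}\!G^{M_2}(o_2,\cdot)\geq {}^{\D}\!G^{M_1}(o_1,\cdot)$ on the compact inner boundary, the exhaustion argument (compare ${}^{\D}\!G^{\Omega_k}(o_1,\cdot)$, which vanishes on all of $\partial\Omega_k$ including the portion on $\partial M_1$, against $C\,{}^{\D}\!G^{M_2}(o_2,\cdot)\geq 0$, and let $k\to+\infty$) yields the purely multiplicative bound ${}^{\D}\!G^{M_1}(o_1,y)\leq C\,{}^{\D}\!G^{M_2}(o_2,y)$ outside a compact set, precisely as in the proofs of Theorem~\ref{thm_D_L_1} and Theorem~\ref{asymptotic_invariance_L1}. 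Integrating this bound and using local integrability of the kernel on the compact part gives ${}^{\D}\!G^{M_1}(o_1,\cdot)\in L^1(M_1)$ with no volume hypothesis. One further point to make precise: since the isometry is defined only on $M_1\backslash K_1$, the ``inner boundary'' must be the smooth boundary of a slightly enlarged compact neighborhood of $K_1$ contained in $\inte\,M_1$ and avoiding the pole (your ``thickening''), not $\partial K_1$ itself, which lies in $K_1$ where the isometry is not defined.
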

\begin{proof}
Suppose by contradiction that $M_1$ satisfies the $\D$-$L^1$-Liouville property, but $M_2$ does not. In this case there exists a nonnegative superharmonic function $ u \in L^1(M_2)$ with $u = 0$ on $\partial M_2$. Let $\phi : M_1 \backslash K_1 \to M_2 \backslash K_2$ be an isometry, and let $\Omega_1, \Omega_2 \subset M_1$ be relatively compact open sets such that $K_1 \Subset \Omega_1 \Subset \Omega_2$.

Let $u_1 : M_1 \backslash K_1 \to \rr $ be a function given by $ u_1 = u \circ \phi$. We know that $u_1$ is a nonnegative superharmonic integrable function. In order to extend this function on all of $M_1$, we consider
$$ c \doteq \min_{\bar \Omega_2\backslash \Omega_1} u_1 > 0$$
and define
\[v_1 \doteq
\left\{
\begin{array}[c]{l}
\min\{u_1 ,c\} \ \ \text{in} \ \ M_1 \backslash \bar\Omega_1 \\
c \ \ \text{on} \ \ \Omega_2.
\end{array}
\right.
\]
The function $v_1$ is well defined because $v_1 \equiv c$ on $\bar\Omega_2\backslash \Omega_1$. Furthermore, $v_1$ is a nonnegative superharmonic function because it is superharmonic in $M_1 \backslash \bar\Omega_1 $ and in $\Omega_2$, open sets covering $M_1$. Finally, since $\phi(\partial M_1) = \partial M_2$ we have a contradiction because $v_1$ is clearly integrable at infinity, vanishes at $\partial M_1$, and is not identically zero.
\end{proof}

Next, we show that the validity of the $L^{1}$-Liouville property of a manifold without boundary depends on the $\D$-$L^{1}$-Liouville property of (at least) one of its ends. This can be considered as a converse of Corollary \ref{thm_comparison_dirichlet} when the open submanifold $M$ has compact complement  in $N$.

\begin{theorem}\label{asymptotic_invariance_L1}
Let $(N,h)$ be a manifold with empty boundary $\partial N = \emptyset$. The following assertions hold.
\begin{enumerate}
\item[i)] Let $M$ be a smooth open submanifold in $N$ such that $N\backslash M$ is compact. If $N$ is $L^1$-Liouville, then $M$ is $\D$-$L^1$-Liouville.
\item[ii)] Let $E$ be an end of $N$. Then $E$ is $\D$-$L^1$-Liouville if and only if its Riemannian double $\D(E)$ is $L^1$-Liouville.
\item[iii)] $N$ is $L^1$-Liouville if and only if at least one of its ends is $\D$-$L^1$-Liouville.
\end{enumerate}
\end{theorem}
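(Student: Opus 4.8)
The plan is to exploit the comparison principle relating Green's kernels, Corollary~\ref{thm_comparison_dirichlet}, together with the asymptotic (compact-perturbation) nature of the $L^{1}$-Liouville property encoded in Theorem~\ref{thm_D_L_1}, and reduce everything to statements about integrability of Dirichlet Green's kernels. For part~(i), suppose $N$ is $L^{1}$-Liouville but $M$ is not $\D$-$L^{1}$-Liouville. By Theorem~\ref{thm_D_L_1} the Dirichlet Green's kernel ${}^{\D}\!G^{M}(x,\cdot)$ is in $L^{1}(M)$ for $x\in\inte\,M$. The difficulty is that the inequality ${}^{\D}\!G^{M}\le G^{N}$ from \eqref{ineq_G_D} points the wrong way for transferring integrability from $M$ up to $N$. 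To overcome this I would argue directly at the level of superharmonic functions, mimicking the proof of Proposition~\ref{prop-L1-ends}: take $0\le u\in L^{1}(M)$ superharmonic with $u=0$ on $\partial M$; since $N\setminus M$ is compact, truncate $u$ at a small constant $c>0$ (chosen below $\min_{\partial M} $ of $u$ on a collar, so that $c\le u$ near $\partial M$) and extend by the constant $c$ across the compact set $N\setminus M$. The resulting function on $N$ is superharmonic (it is superharmonic on the two overlapping open sets $M\setminus(\text{collar})$ and a neighbourhood of $N\setminus M$), nonnegative, nonconstant, and $L^{1}$ because it agrees with $\min\{u,c\}$ at infinity and $N\setminus M$ has finite volume. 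This contradicts the $L^{1}$-Liouville property of $N$.

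For part~(ii), apply part~(i) and its converse (Corollary~\ref{thm_comparison_dirichlet}) to the double $\D(E)$. Writing $\D(E)=E\cup_{\partial E}\bar E$ with $\bar E$ an isometric copy of $E$ glued along the common boundary, the set $\D(E)\setminus E$ is \emph{not} compact in general, so part~(i) does not apply verbatim; instead I would use that $E$ sits inside $\D(E)$ as the closure of a smooth open submanifold and invoke Corollary~\ref{thm_comparison_dirichlet} directly: if $E$ is $\D$-$L^{1}$-Liouville then $\D(E)$ is $L^{1}$-Liouville. For the converse, if $\D(E)$ is $L^{1}$-Liouville, one exploits the $\zz_2$-symmetry: a nonnegative $L^{1}$ superharmonic $u$ on $E$ with $u=0$ on $\partial E$ extends by odd-type reflection — more precisely, extend $u$ by $0$ on $\bar E$; this extension is superharmonic across $\partial E$ (the gluing locus) because $u\ge 0=u|_{\partial E}$ and subtracting nothing from a superharmonic function keeps the mean-value inequality, and it is in $L^{1}(\D(E))$. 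Being nonconstant it contradicts the $L^{1}$-Liouville property of the double, so $E$ is $\D$-$L^{1}$-Liouville. (Alternatively, and perhaps more cleanly, glue a compact cap onto $E$ to reduce to part~(i); by Proposition~\ref{prop-L1-ends} the $\D$-$L^{1}$-Liouville property of the capped manifold equals that of $\D(E)$ since the two differ by a compact modification, and equals that of $E$ for the same reason.)

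For part~(iii), the forward direction is immediate from part~(i): if $N$ is $L^{1}$-Liouville, fix any compact exhausting set $K$ so that $N\setminus K$ is a disjoint union of ends $E_{1},\dots,E_{\ell}$; then $M:=N\setminus K$ is $\D$-$L^{1}$-Liouville by~(i), and since the $\D$-$L^{1}$-Liouville property of a disjoint union holds iff it holds on each component (the Dirichlet mean exit time and Dirichlet Green's kernel split over components), each $E_{i}$ is $\D$-$L^{1}$-Liouville — in fact all ends are, which is stronger than required. For the converse, suppose some end $E_{i}$ is $\D$-$L^{1}$-Liouville. Then ${}^{\D}\!E^{E_{i}}$ is infinite everywhere by Theorem~\ref{thm_D_L_1}(iii) (its negation); viewing $E_{i}$ as the closure of a smooth open submanifold of $N$, Corollary~\ref{thm_comparison_dirichlet} gives $E^{N}\ge {}^{\D}\!E^{E_{i}}\equiv+\infty$, whence $N$ is $L^{1}$-Liouville. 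The main obstacle is the bookkeeping in part~(ii): making precise the extension-by-zero-across-the-gluing argument (or, equivalently, the capping argument) so that superharmonicity across $\partial E$ is rigorous — this is where one must be careful that the $C^{0}$-matching with nonnegative one-sided values suffices for the distributional supersolution property, which is exactly the kind of statement the paper handles via the weak formulation and the maximum principles recalled in the Appendix.
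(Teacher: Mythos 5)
The genuine gap is in the backward direction of part (ii). The extension of $u$ by zero across the gluing hypersurface $\partial E$ is \emph{not} superharmonic on $\D(E)$: at a point $p\in\partial E$ the extended function takes the value $0$, while its mean over any small geodesic sphere centred at $p$ is strictly positive (since $u>0$ in $\inte\,E$ by the strong minimum principle), so the super-mean-value inequality fails exactly on the seam; distributionally, $\Delta\tilde u=(\Delta u)\,\chi_{E}-(\partial u/\partial\nu)\,d\sigma_{\partial E}$ with $\partial u/\partial\nu\le 0$ on $\partial E$ (indeed $<0$ by the boundary point lemma), so the singular term has the wrong sign. The one-dimensional picture is $u(t)=t$ on $[0,+\infty)$ extended by zero, which gives the convex function $t^{+}$. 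Extension by zero is the right device for \emph{sub}harmonic functions that are $\le 0$ at the interface (as used in Proposition \ref{DirParSubharmonic}); for superharmonic functions the correct surgery is truncation by a constant as in Proposition \ref{prop-L1-ends}, and that is unavailable here precisely because the gluing happens along the set where $u$ vanishes. Your fallback does not repair this: the capped manifold $E\cup_{\partial E}C$ and the double $\D(E)$ are \emph{not} isometric outside compact sets (the mirror copy removed from $\D(E)$ is noncompact; for $E=[0,+\infty)\times\mathbb{S}^{1}$ the double is a two-ended cylinder while the capped manifold has one end), so Proposition \ref{prop-L1-ends} does not apply. The paper settles (ii) by combining Corollary \ref{thm_comparison_dirichlet} (the direction you did correctly) with an adaptation of the kernel-comparison argument of part (i) to the double; if you want to stay at the level of functions, the object to compare $u$ with is the antisymmetrized kernel $G^{\D(E)}(\cdot,y)-G^{\D(E)}(\cdot,y^{*})$, $y^{*}$ the mirror point, which does vanish on $\partial E$ --- not a reflection or zero-extension of $u$ itself.

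Parts (i) and (iii) are essentially workable, and your route for (i) --- cut-and-paste of superharmonic functions in the spirit of Proposition \ref{prop-L1-ends} instead of the paper's comparison of $u$ with $G^{N}$ along an exhaustion --- is legitimate once one detail is corrected: you cannot choose $c>0$ with $c\le u$ ``near $\partial M$'', because $u$ vanishes identically on $\partial M$. The transition annulus must sit at positive distance from $\partial M$: choose relatively compact open sets $V_{1}\Subset V_{2}$ in $N$ with $N\setminus M\subset V_{1}$, so that $\overline{V}_{2}\setminus V_{1}$ is a compact subset of the open submanifold $M$ where $u>0$; set $c=\min_{\overline{V}_{2}\setminus V_{1}}u>0$ and let $v=c$ on $V_{2}$ and $v=\min\{u,c\}$ on $N\setminus\overline{V}_{1}$, so the constant region swallows $N\setminus M$ \emph{together with} a neighbourhood of $\partial M$. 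You must also verify that $v$ is nonconstant; this follows from $u\in L^{1}(M)$ when the volume is infinite, and some such nondegeneracy is implicitly assumed in the paper's own proof too (it uses finiteness of $G^{N}$). Your argument for (iii) is fine modulo (i)--(ii): splitting the Dirichlet mean exit time over the components of $N\setminus K$ is harmless because the components are open and closed (no seam), and the converse is exactly the second assertion of Corollary \ref{thm_comparison_dirichlet}.
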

\begin{proof}
The  proof of $i)$ is similar to that of $i) \Rightarrow ii)$ in Theorem \ref{thm_D_L_1}. Let $G^N$ be the Green's kernel of $N$. By assumption
$$ \int_N G^N(x,y)\,dx = + \infty \ \ \forall \, y \in N$$
and recall that $G^N$ is obtained by an exhaustion procedure
$$ G^N = \lim_{k} {}^{\mathcal{D}}\!G^{\Omega_k},$$
where ${\Omega_k}$ is a sequence of relatively compact open domains with smooth boundary. Since $N\backslash M$ is compact we may assume that $N\backslash M\Subset \Omega_1$. Let $u \geq 0$ satisfy $\Delta u \leq 0$ in $M$ and $u=0$ on $\partial M$. Since $u > 0$ in $M$ and $\partial M \subset \Omega_1$,
$$ \inf_{\partial\Omega_1}u = c_1 > 0$$
and having fixed $y_0 \in \Omega_1$ we have
$$ \sup_{\partial\Omega_1}G^N(x,y_0) = c_2 < +\infty.$$
Thus, there exists $\lambda > 0$ such that $u(x) \geq \lambda G^N(x,y_0)$ for any $x \in \partial\Omega_1$, and since $G^N \geq {}^{\mathcal{D}}\!G^{\Omega_k}$ we have $u \geq \lambda \,{}^{\mathcal{D}}\!G^{\Omega_k}$ on $\partial\Omega_k \cup \partial\Omega_1$. By the maximum principle $u\geq \lambda \,{}^{\mathcal{D}}\!G^{\Omega_k}$ in $\Omega_k \backslash \Omega_1$ and passing to the limit $u \geq \lambda G^N$ in $N\backslash \Omega_1$. Integrating, and recalling that $\Omega_1$ is compact and that $G^N$ is locally integrable, we conclude
$$ \int_M u(x)\,dx \geq \int_{M\backslash\Omega_1}u(x)\,dx \geq \lambda \int_{M\backslash\Omega_1}G^N(x,y_0)\,dx = +\infty.$$
A similar argument and Corollary \ref{thm_comparison_dirichlet} prove $ii)$. For $iii)$ we know that if at least one end is $\D$-$L^1$-Liouville, then $N$ is $L^1$-Liouville by Corollary \ref{thm_comparison_dirichlet}. Conversely, suppose that no end of $N$ satisfies the $\D$-$L^1$-Liouville property, that is, there exists a compact set $K \subset N$ such that $N\backslash K = \cup_{i=1}^{m}E_i$ where $E_i$ are ends of $N$ and ${}^{\D}\!E^{E_i}$ are finite functions, for any $i=1,...,m$. Since $\partial E_i \cap \partial E_j = \emptyset$ for any $i\not=j$, setting  ${}^{\D}\!E^{E_i}\equiv0$ outside of $E_i$, it is easy to verify that ${}^{\D}\!E^{N\backslash K} = {}^{\D}\!E^{E_1} + \cdots +{}^{\D}\!E^{E_m}$ and the desired contradiction follows from part $i)$.
\end{proof}

\section{Stochastic completeness vs $L^1$-Liouville property} \label{StochVsL1}

As we have seen in the introduction, A. Grigor'yan \cite{Grigoryan_stochastic_harmonic}, applying the version of Theorem \ref{thm_D_L_1} for manifold without boundary, proved that every stochastically complete manifold is $L^1$-Liouville. Furthermore, the two concepts are in fact equivalent for the large class of rotationally symmetric manifolds, \cite{BPS-Pota}. In this section we will present some examples in any dimension showing that, in general, the two concepts are not equivalent. This completes the picture in  \cite{BPS-Pota} where only the case of $2$-dimensional surfaces was considered.\smallskip

Since the $L^1$-Liouville property is equivalent to the divergence of the integral
$$ \int_M G(x,y)\,dy = \int_M \int_{0}^{\infty} p(t,x,y)\,dt\,dy ,$$
a natural approach to investigate its validity is by means of heat kernel, respectively Green's kernel, estimates.

In the first example we employ parabolic arguments to show the divergence of the above integral.
We will make use of the following estimates obtained in this setting by A. Grigor'yan and L. Saloff-Coste in \cite{Grigoryan-Saloff-Coste}.

We recall that a manifold satisfies the parabolic Harnack inequality (PHI) if there exists a constant $C_0$ such that any nonnegative solution $u$ of the heat equation $ \partial_{t}u = \bigtriangleup u$ in any cylinder $  Q = (\tau,\tau + T)\times B(x,r)$ with $ T=r^2$ and $ \tau \in (-\infty,+\infty)$, satisfies
\begin{equation}\tag{PHI}
\sup_{Q_{-}}u \leq C_{0}\inf_{Q_{+}}u ,
\end{equation}
where ${Q_{-}} = (\tau+\frac{T}{4},\tau + \frac{T}{2})\times B(x,\frac{r}{2})$ and $ Q_{+} = (\tau+\frac{3T}{4},\tau+T)\times B(x,\frac{r}{2})$.

It is well-known (see \cite[Theorem 5.5.1]{Saloff-Coste_book}) that (PHI) is equivalent to the simultaneous validity of the doubling and the  scale-invariant Poincar\'e inequalities, and in a complete manifold it is implied by nonnegative Ricci curvature.

\begin{theorem}\cite[Theorem 3.3]{Grigoryan-Saloff-Coste}\label{thm_grigoryan_saloff}
Let $M$ be a complete nonparabolic manifold, $\partial M = \emptyset$. Assume that the parabolic Harnack inequality (PHI) holds on $M$ and let $K$ be a compact set in $M$. Then there exists $\delta >0$ and, for each $t_0 > 0$, there exist positive constants $C$ and $c$ such that, for all $ t> t_0$ and all $x,y \in \Omega = M \backslash K_{\delta}$, $K_\delta$ being the $\delta$-neighborhood of $K$,
\begin{equation}
p^{\Omega}(t,x,y) \geq \frac{c}{V(x,\sqrt{t})}\exp\left(-C\frac{d^{2}(x,y)}{t}\right).
\end{equation}
\end{theorem}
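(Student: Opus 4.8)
The statement is a result of Grigor'yan and Saloff-Coste, so the plan is to reconstruct the strategy behind it. The idea is to transfer the Gaussian lower bound from the global heat kernel $p^{M}$ to the Dirichlet heat kernel $p^{\Omega}$ of the exterior domain $\Omega=M\setminus K_{\delta}$, in three steps. The starting point is that (PHI) on $M$ is equivalent to the Li--Yau type two-sided Gaussian estimates for $p^{M}$, together with volume doubling; in particular there are $c_{1},C_{1}>0$ such that $p^{M}(t,x,y)\ge c_{1}V(x,\sqrt{t})^{-1}e^{-C_{1}d^{2}(x,y)/t}$ and $p^{M}(t,x,y)\le C_{1}V(x,\sqrt{t})^{-1}e^{-d^{2}(x,y)/(C_{1}t)}$. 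I would then reduce the claim to two ingredients: (a) an \emph{on-diagonal} lower bound $p^{\Omega}(t,x,x)\ge cV(x,\sqrt{t})^{-1}$ for all $t\ge t_{0}$ and $x\in\Omega$; and (b) a parabolic Harnack chaining argument upgrading (a) to the full Gaussian estimate. Step (b) is routine: every ball $B(z,r)$ with $B(z,2r)\subset\inte\,\Omega$ satisfies the local parabolic Harnack inequality inherited from $M$, so one joins $x$ to $y$ by a chain of such balls lying in $\Omega$ and iterates the semigroup identity $p^{\Omega}(t,x,y)\ge\int\!\cdots\!\int\prod_{i=0}^{n-1}p^{\Omega}(t/n,z_{i},z_{i+1})\,dz_{1}\cdots dz_{n-1}$ in the usual way, combining it with (a); when $d(x,y)\lesssim\sqrt{t}$ one only needs (a) and one intermediate point.

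For the on-diagonal bound in (a) I would distinguish two regimes according to how $d(x,K)$ compares with $\sqrt{t}$. If $d(x,K)\ge A\sqrt{t}$ for a large fixed $A$, then $B:=B(x,\tfrac{A}{2}\sqrt{t})\subset\Omega$, hence by domain monotonicity $p^{\Omega}(t,x,x)\ge p^{B}(t,x,x)\ge cV(x,\sqrt{t})^{-1}$, the last inequality being the Gaussian lower bound for the Dirichlet heat kernel of a ball on the time scale $t\lesssim(\mathrm{radius})^{2}$, again a consequence of (PHI). If instead $\delta\le d(x,K)\le A\sqrt{t}$, I would use the first-hitting decomposition
\[
p^{M}(t,x,x)=p^{\Omega}(t,x,x)+\mathbb{E}_{x}\!\left[\mathbf{1}_{\{\tau<t\}}\,p^{M}(t-\tau,X_{\tau},x)\right],
\]
where $\tau$ is the first hitting time of $M\setminus\Omega=\overline{K_{\delta}}$, so that $X_{\tau}\in\partial K_{\delta}$ on $\{\tau<\infty\}$. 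The error term is at most $\bigl(\sup_{0<s\le t,\ z\in\partial K_{\delta}}p^{M}(s,z,x)\bigr)\,\psi_{\delta}(x)$ with $\psi_{\delta}(x)=\mathbb{P}_{x}(\tau<\infty)$; by the Gaussian upper bound and doubling (using $d(x,z)\le d(x,K)+\delta\lesssim\sqrt{t}$) the supremum is $\le C_{2}V(x,\sqrt{t})^{-1}$, while $\psi_{\delta}(x)\to0$ as $x\to\infty$ because $M$ is \emph{nonparabolic} (equivalently $G^{M}$ exists and vanishes at infinity). Choosing $\delta$ large enough that $\psi_{\delta}\le\varepsilon_{0}\ll c_{1}/C_{2}$ off $\{d(x,K)<\delta\}$ yields $p^{\Omega}(t,x,x)\ge\tfrac12 c_{1}V(x,\sqrt{t})^{-1}$ in this regime, and the residual compact region $\{d(x,K)\le R_{0}\}\cap\Omega$ is absorbed by one more chaining step to a point at distance $\sim R_{0}$ from $K$.

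The main obstacle is precisely this on-diagonal estimate \emph{uniform in $t\ge t_{0}$ and in $x$ up to the boundary of $\Omega$}. The naive comparison $p^{\Omega}\ge p^{M}-(\text{error})$ only bounds the error by the hitting probability $\psi_{\delta}(x)$, which equals $1$ on $\partial K_{\delta}$; so the estimate genuinely degenerates as $x$ approaches the removed obstacle, and this is exactly why the statement asserts the bound only outside a \emph{sufficiently large} neighbourhood $K_{\delta}$ of $K$ and why nonparabolicity of $M$ is indispensable --- on a parabolic ambient the Brownian motion is eventually captured by the obstacle and $p^{\Omega}$ decays strictly faster. A secondary technical point is the connectivity of $\Omega$: if $M\setminus K$ has several ends one should read $d(x,y)$ in the statement as the distance \emph{inside} $\Omega$ (equivalently, work one end at a time), so that the Harnack chains never have to cross $K$. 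Modulo these points the argument is that of Grigor'yan--Saloff-Coste, to which I would refer for the remaining details.
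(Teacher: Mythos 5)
You should note at the outset that the paper itself contains no proof of Theorem \ref{thm_grigoryan_saloff}: it is quoted, with a citation, from \cite{Grigoryan-Saloff-Coste}, so there is no internal argument to compare yours against. Judged as a reconstruction of the Grigor'yan--Saloff-Coste proof, your outline is essentially the right one: (PHI) enters through the equivalent two-sided Li--Yau bounds and volume doubling, the Dirichlet kernel is compared with the global kernel via the strong Markov (first-hitting) identity, nonparabolicity makes the hitting probability of the compact set small far away, and a parabolic Harnack chain upgrades the on-diagonal bound to the full Gaussian one. One imprecision: the equivalence you invoke, ``nonparabolic iff $G^{M}$ exists and vanishes at infinity'', is false on a general manifold; it is correct here only because (PHI) gives $G(x,y)\asymp\int_{d(x,y)^{2}}^{\infty}V(x,\sqrt{s}\,)^{-1}\,ds$, whose tail tends to $0$ exactly when the manifold is nonparabolic. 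This should be said, since it is the only place where (PHI) and nonparabolicity interact.

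The genuine gap is in how you close the argument near the obstacle. You take the obstacle to be $K_{\delta}$ itself, so $\psi_{\delta}$ is the probability of hitting $K_{\delta}$, and then propose to choose $\delta$ so large that $\psi_{\delta}\le\varepsilon_{0}$ on $M\setminus K_{\delta}$; this is impossible, because $\psi_{\delta}(x)\to 1$ as $x$ approaches $\partial K_{\delta}$, no matter how large $\delta$ is. The proposed remedy, absorbing the region near $\partial K_{\delta}$ by ``one more chaining step'', cannot work either: $p^{\Omega}(t,\cdot,y)$ vanishes on $\partial\Omega$, so no bound of the form $c\,V(x,\sqrt{t}\,)^{-1}e^{-Cd^{2}(x,y)/t}$ with $c=c(t_{0})>0$ can hold uniformly up to the boundary carrying the Dirichlet condition. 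What your hitting-time computation actually proves (taking the obstacle to be $K$, or a fixed small neighbourhood of it, and choosing $\delta$ large so that $\mathbb{P}_{x}(\tau_{K}<\infty)\le\varepsilon_{0}$ whenever $d(x,K)\ge\delta$) is the lower bound for the Dirichlet kernel of the exterior of $K$ at points $x,y$ with $d(\cdot,K)\ge\delta$, i.e.\ a ``gap'' version in which the estimate is asserted at a definite distance from the Dirichlet boundary. That is the shape of the estimates in \cite{Grigoryan-Saloff-Coste}, whose two-sided bounds carry the profile factors $h(x)h(y)$, with $h$ the probability of never hitting the obstacle, vanishing on the boundary; and the gap version is all that the application in Example \ref{ex_1_L1_stoc_incomplete} requires, since the near-boundary set contributes a bounded region to the integral. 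But by domain monotonicity the gap version does not literally yield the displayed inequality for $p^{M\setminus K_{\delta}}$ at every $x,y\in M\setminus K_{\delta}$, so you must either restate the conclusion in the gap form or keep the profile factors; as written, the last step of your argument does not go through.
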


We now present an example in arbitrary dimension of a stochastically incomplete $L^1$-Liouville manifold with at least two ends.

\begin{example}\label{ex_1_L1_stoc_incomplete}\rm
Let $M$ be a Riemannian manifold, $\partial M = \emptyset$, defined by the connect sum $M = M_{1} \# M_{2}$, where
\begin{enumerate}
\item[$-$] $ M_{1} $ is a complete nonparabolic Riemannian manifold which supports (PHI);\vspace{0.2cm}
\item[$-$] $ M_{2} $ is a geodesically complete and stochastically incomplete Riemannian manifold.\vspace{0.1cm}
\end{enumerate}
We claim that $M$ is a stochastically incomplete $L^{1}$-Liouville manifold.

Indeed, since $M$ is defined by a connected sum and $M_2$ is stochastically incomplete then $M$ is stochastically incomplete (see \cite{Bessa-Bar} or more generally \cite{BPS-Pota}). Now, let $ K \subset M_{1}$ be a compact subset containing the disk along whose boundary the gluing of $ M_{1}$ with $ M_{2}$ is performed. Denote by $ \Omega = M_1 \backslash K_{\delta}$, the subset obtained in Theorem \ref{thm_grigoryan_saloff}, and consider the Dirichlet heat kernel $ {}^{\mathcal{D}}\!p^{\Omega}$, the Dirichlet Green's kernel ${}^{\mathcal{D}}\!G^{\Omega}$, and the Dirichlet mean exit time ${}^{\mathcal{D}}\!E^{\Omega}$ of $\Omega$. Being $M_1$ nonparabolic, from Theorem \ref{thm_grigoryan_saloff} we have for $ x \in \inte\,\Omega$
\begin{eqnarray*}
{}^{\mathcal{D}}\!E^{\Omega}(x) &=& \int_{\Omega}{}^{\mathcal{D}}\!G^{\Omega}(x,y)dy \\
&=& \int_{\Omega}\int_{0}^{\infty}{}^{\mathcal{D}}\!p^{\Omega}(t,x,y)dt dy \\
&\geq & \int_{\Omega}\int_{t_0}^{\infty}\frac{c}{V_{1}(x,\sqrt{t})}\exp\left(-C\frac{d^{2}(x,y)}{t}\right)dt dy \\
&\geq & \int_{\Omega \cap B(x,\sqrt{t})}\int_{t_0}^{\infty}\frac{c}{V_{1}(x,\sqrt{t})}\exp\left(-C\frac{t}{t}\right)dt dy \\
&=& c \exp(-C)\int_{t_0}^{\infty}\frac{V_{1}(\Omega \cap B(x,\sqrt{t}))}{V_{1}(x,\sqrt{t})}dt \\
&\geq & c \exp(-C)\left(\int_{t_1}^{\infty}dt - \int_{t_1}^{\infty}\frac{V_{1}(K_{\delta})}{V_{1}(x,\sqrt{t})}dt\right) \\
&=& +\infty .
\end{eqnarray*}
The conclusion then follows by Corollary \ref{thm_comparison_dirichlet}.
\end{example}

The following is an alternative, slightly more general, construction based on Theorem \ref{asymptotic_invariance_L1}. Indeed, suppose that $M_1$ is $L^1$-Liouville with one end, for instance $M_1$ is a stochastically complete model manifold (see, e.g., \cite{BPS-Pota}) and let $M_2$  and  $M=M_1\# M_{2}$ be as above. Let also $K$ be a compact set with smooth boundary, containing the disks along which $M_1$ and $M_2$ have been glued, and denote by $E_1$ the end in $M\backslash K$  isometric to the corresponding end of $M_1$. Since $M_1$ is $L^1$-Liouville, by Theorem \ref{asymptotic_invariance_L1} $i)$, $E_1$ is $\mathcal D$-$L^1$-Liouville, and therefore $M$ is $L^1$-Liouville by Theorem \ref{asymptotic_invariance_L1} $iii)$. On the other hand, as we have already observed, $M$ is stochastically incomplete due to the presence of the  $M_{2}$ summand.\smallskip

In the next example we construct a stochastically incomplete $L^1$-Liouville manifold with only one end.

\begin{example}\label{ex_2_L1}\rm
Let $h$ be any Riemannian metric on $\rr^{m}$ which makes $(\rr^{m}, h)$ stochastically incomplete and such that $h = g_{\rr^{m}}$ on the upper half space $\bar \rr^{m}_{+} \doteq \{(x^1,\dots,x^m) \in \rr^{m} : x^m \geq 0\}$. For instance, we can let $g$ be the Riemannian metric given in polar coordinates by
$g = dr^2 + g_1(r)^2d\theta^2,$ with $g_1(r)= e^{r^3}$ for $r \gg 1$, $g_1$ smooth and satisfying $g_1(0) = 0$ and $g_{1}'(0)=1$
and define $h$ as the convex sum $h = (1-\xi)g + \xi g_{\rr^m}$ where $\xi$ is a cut-off function depending only on $x^m$ and satisfying
$$\xi(x) = \left\lbrace
\begin{array}{ll}
1 & \text{if} \ \ x^m \geq 0 \\
0 & \text{if} \ \ x^m \leq -1.
\end{array}\right.
$$
Arguing as in \cite[p. 318] {BPS-Pota} it is easy to construct a function which violates the weak maximum principle at infinity, and therefore $M$ is not stochastically complete.\smallskip

We claim that $(\rr^{m}, h)$ is $L^1$-Liouville. According to Corollary \ref{thm_comparison_dirichlet} it is enough to show that the Euclidean half space $\bar \rr^{m}_{+}$ is $\D$-$L^1$-Liouville. We shall provide three different ways to prove this fact. The first and the second rely strongly on the explicit expression of the heat and the Green's kernel of half spaces. With the third, elegant method we illustrate how the machinery based on a global comparison principle works.\smallskip

\noindent{\bf Heat kernel estimates:}
By making use of the expression of the Dirichlet heat kernel of $\bar \rr^{m}_{+}$ P. Gyrya and L. Saloff-Coste \cite{Gyrya_Saloff-Coste} give the following lower bound
\begin{equation*}
{}^{\D}\!p^{{\bar \rr^{m}_{+}}}(t,x,y)\geq \frac{Cx^{m}y^{m}}{t^{\frac{n}{2}}(x^m + \sqrt{t})(y^m + \sqrt{t})}\exp\left(-\frac{c\vert x-y\vert^2}{4t}\right).
\end{equation*}
Arguing as in Example \ref{ex_1_L1_stoc_incomplete} we achieve the desired conclusion.\smallskip

\noindent{\bf Green's kernel estimates:}
Recalling the expression of the Dirichlet Green's kernel of $\bar \rr^{m}_{+}$ given in Example \ref{ex_euclid_half_space_2}, we have, for $m\geq 3$
\begin{eqnarray*}
{}^{\D}\!E^{\bar \rr^{m}_{+}}(x) &=& \int_{\bar \rr^{m}_{+}}{}^{\D}\!G^{\bar \rr^{m}_{+}}(x,y)\,dy \\[0.1cm]
&=& C_m\int_{\bar \rr^{m}_{+}}\left(\frac1{|x-y|^{m-2}} - \frac 1{|x-y'|^{m-2}}\right)\,dy
\end{eqnarray*}
where we have set $y' \!=\! (y^1,\ldots,y^{m-1},-y^m)$.
Choosing $x\!=\!(0,\dots,0,1)$, passing to cylindrical coordinates $(r,\theta, x^m)$ and applying Lagrange's theorem, the integrand can be written as
\[
\frac 1{[r^2+(x^m-1)^2]^{(m-2)/2}}-\frac 1{[r^2+(x^m+1)^2]^{(m-2)/2}}=C{\xi}[\xi^2+r^2]^{-m/2}
\]
with $x^m-1<\xi<x^m+1$ and it is therefore estimated from below by
\[
C(x^m-1) [(x^m+1)^2+r^2]^{-m/2}\geq \frac 12(x^m+1) [(x^m+1)^2+r^2]^{-m/2} \ \ \text{ for } \, x^m\geq 3.
\]
We may therefore estimate
\begin{equation*}
{}^{\D}\!E^{\bar \rr^{m}_{+}}(x) =
C_m \int_{0}^\infty r^{m-2} dr\int_3^{\infty} (x^m+1) [(x^m+1)^2+r^2]^{-m/2}=+\infty,
\end{equation*}
and $\bar \rr^{m}_{+}$ is $\mathcal{D}$-$L^1$-Liouville. Similar computations yield the conclusion if $m=2$.

\noindent{\bf Elliptic argument:}
We describe now an elliptic argument very much in the spirit of the  general results that follow. Note that by using the machinery so far developed it allows to obtain the required conclusion with a minimal amount of explicit computations.

Given $k>1$, consider the slab $\Omega_k = \{x \in \rr^{m}_{+} : \frac{1}{k} \leq x^m \leq k\}$. It is clear that $E_k(x) = \frac 1 2 (x^m - \frac 1 k)(k - x^m)$ is a positive, bounded solution of
\[\left\lbrace
\begin{array}{rrl}
\Delta E_k = & -1 & \text{in} \ \ \inte\, \Omega_k \\
E_k = & 0 & \text{on} \ \ \partial \Omega_k .
\end{array}
\right.
\]
Moreover, since $\phi_k(x) =  E_k(x) + \varepsilon\vert x\vert^2$ is a  Khas'minskii function for $\Omega_k$ whenever  $\varepsilon$ is sufficiently small, by Lemma \ref{Khas-D-parabolicity} $\Omega_k$ is $\D$-parabolic. Therefore the function $E_k$ is its (finite) Dirichlet mean exit time. Now, fixing $ x \in \inte\, \Omega_1$, we see that  $E_{k}(x) \to +\infty$ as $k \to +\infty$. Therefore, Corollary \ref{thm_comparison_dirichlet} implies that $\bar \rr^{m}_{+}$ is $\mathcal{D}$-$L^1$-Liouville.
\end{example}

\section{Localized geometric conditions for the $L^1$-Liouville property}\label{section-localized}

According to the theory developed so far, in view of the above examples, and as we have already observed in the Introduction of the paper, it is natural to guess that the validity of the $L^1$-Liouville property depends only on geometric conditions confined on a sufficiently large domain of a manifold. This section aims to confirm this intuition by considering three different situations: the first one, very general, involves a (asymptotically) nonnegative Ricci curvature condition on a geometric half space; the second  concerns with the complement of half space in hyperbolic situations, and the third deals with warped product cones and depends on a spectral assumption.

\subsection{Nonnegative Ricci curvature on a half space}

Inspired by Example \ref{ex_2_L1}, we  first investigate the role of lower Ricci curvature bounds in a geometric half space. Essentially, we are going to work with nonnegative Ricci curvature, but a certain amount of negativity is also allowed. For a geometric half space we mean the following.

\begin{definition}\label{def01}
Let $\gamma:[0,+\infty)\rightarrow M$ be a geodesic ray, parametrized by arc-length, into the complete Riemannian manifold $\left(M,g\right)$. The half space $M^{+}$ with respect to $\gamma$ is the domain
\[
\displaystyle{ M^{+}=\cup_{t>0}B_{t}\left(\gamma\left(  t\right)  \right)}  ,
\]
where $B_{t}\left(p\right)$ is the open metric ball of radius $t>0$ centered at $p$.
\end{definition}

\begin{theorem}\label{thm_L1_nonnegative_curv}
Assume that $\left(M,g\right)$ is a complete Riemannian manifold satisfying
\begin{equation}\label{RicLB}
\mathrm{Ric} \geq - \frac{(m-1)B^2}{1+r_{\gamma(0)}^2(x)}
\end{equation}  on the half space $M^{+}$ with respect to some geodesic ray $\gamma$, for some $0\leq B< \frac{\sqrt m}{m-1}, $ where $r_{\gamma(0)}$ denotes the distance function in $M$ from $\gamma(0)$. Then $\left(M,g\right)  $ is $L^{1}$-Liouville.
\end{theorem}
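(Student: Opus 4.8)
The plan is to reduce the statement to a divergence property of a Dirichlet mean exit time and then invoke Corollary~\ref{thm_comparison_dirichlet}: it is enough to produce a point $\bar x\in M^{+}$ and a sequence of smooth relatively compact domains $\Omega_{k}\subset M^{+}$ with ${}^{\mathcal{D}}\!E^{\Omega_{k}}(\bar x)\to+\infty$. I would take $\bar x=\gamma(2)$ and, for large $k$, a smooth domain $\Omega_{k}$ with $B_{k-1}(\gamma(k))\subset\Omega_{k}\subset B_{k-\frac12}(\gamma(k))$. Since $B_{t}(\gamma(t))\subset B_{t'}(\gamma(t'))$ for $t<t'$ and $B_{k}(\gamma(k))\subset M^{+}$ by the very definition of the half space, one has $\Omega_{k}\subset B_{k}(\gamma(k))\subset M^{+}$, while $d(\bar x,\gamma(k))=k-2$ forces $\bar x\in B_{k-1}(\gamma(k))\subset\Omega_{k}$. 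By monotonicity of the Dirichlet mean exit time under inclusion of domains it then suffices to bound ${}^{\mathcal{D}}\!E^{B_{k-1}(\gamma(k))}(\bar x)$ from below, which I would do by constructing a radial barrier centered at $\gamma(k)$.

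The crucial observation is that, once $\gamma(k)$ is taken as a pole, \eqref{RicLB} becomes essentially a radial Ricci bound. If $\sigma$ is a unit speed minimizing geodesic from $\gamma(k)$ to $x\in B_{k-1}(\gamma(k))$, then each $\sigma(s)$ lies in $B_{k}(\gamma(k))\subset M^{+}$ and, by the triangle inequality together with $d(\gamma(0),\gamma(k))=k$,
\[
r_{\gamma(0)}(\sigma(s))\ \ge\ d(\gamma(0),\gamma(k))-d(\sigma(s),\gamma(k))\ =\ k-s ,
\]
whence $\mathrm{Ric}(\sigma'(s),\sigma'(s))\ge-\frac{(m-1)B^{2}}{1+(k-s)^{2}}\ge-\frac{(m-1)B^{2}}{(k-s)^{2}}$. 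The Laplacian (Riccati) comparison along the minimizing geodesics from $\gamma(k)$, with the Calabi device to handle the cut locus of $\gamma(k)$ where $r_{\gamma(k)}$ is only Lipschitz, then yields on $B_{k-1}(\gamma(k))$, in the distributional sense, $\Delta r_{\gamma(k)}\le(m-1)\frac{h_{k}'}{h_{k}}(r_{\gamma(k)})$, where $h_{k}$ solves $h_{k}''=\frac{B^{2}}{(k-s)^{2}}h_{k}$ on $[0,k)$ with $h_{k}(0)=0$, $h_{k}'(0)=1$. This Euler type equation I would integrate explicitly,
\[
h_{k}(s)=\frac{k}{\sqrt{1+4B^{2}}}\Big(\big(1-\tfrac{s}{k}\big)^{\beta_{-}}-\big(1-\tfrac{s}{k}\big)^{\beta_{+}}\Big),\qquad \beta_{\pm}=\frac{1\pm\sqrt{1+4B^{2}}}{2},
\]
and note that $h_{k}''\ge0$ wherever $h_{k}\ge0$, so $h_{k}(s)\ge s$ and $h_{k}>0$ on $(0,k)$.

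As barrier I would use the mean exit time of the model manifold warped by $h_{k}$, namely $w_{k}=f_{k}(r_{\gamma(k)})$ with $f_{k}(s)=\int_{s}^{k-1}h_{k}^{-(m-1)}(u)\big(\int_{0}^{u}h_{k}^{m-1}(v)\,dv\big)\,du$. Then $f_{k}$ is smooth, non-increasing, non-negative, $f_{k}(k-1)=0$, and $(h_{k}^{m-1}f_{k}')'=-h_{k}^{m-1}$; combining $f_{k}'\le0$ with the Laplacian comparison above gives $\Delta w_{k}\ge f_{k}''(r_{\gamma(k)})+(m-1)\frac{h_{k}'}{h_{k}}(r_{\gamma(k)})f_{k}'(r_{\gamma(k)})=-1$ weakly on $B_{k-1}(\gamma(k))$. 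Since $w_{k}$ vanishes on $\partial B_{k-1}(\gamma(k))$, a weak maximum principle along an exhaustion of the ball gives $w_{k}\le{}^{\mathcal{D}}\!E^{B_{k-1}(\gamma(k))}$, so that ${}^{\mathcal{D}}\!E^{\Omega_{k}}(\bar x)\ge w_{k}(\bar x)=f_{k}(k-2)$.

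The last step — where I expect the real work and where the hypothesis $B<\frac{\sqrt m}{m-1}$ is decisive — is to show $f_{k}(k-2)\to+\infty$. For $u\in[k-2,k-1]$ one has $1-\frac{u}{k}\asymp\frac1k$, so the explicit formula gives $h_{k}(u)\le(1+4B^{2})^{-1/2}k^{\beta_{+}}$; and $h_{k}(v)\ge v$ gives $\int_{0}^{u}h_{k}^{m-1}\ge\int_{0}^{k/2}v^{m-1}\,dv=\frac1m(k/2)^{m}$ for $k$ large. Plugging these into the integral defining $f_{k}$ over the unit length interval $[k-2,k-1]$ yields
\[
f_{k}(k-2)\ \ge\ \frac{(1+4B^{2})^{\frac{m-1}{2}}}{2^{m}m}\;k^{\,m-\beta_{+}(m-1)},
\]
and an elementary computation identifies $m-\beta_{+}(m-1)>0$ with $\beta_{+}<\frac{m}{m-1}$, i.e.\ with $B<\frac{\sqrt m}{m-1}$. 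Under the hypothesis the exponent is positive, hence $f_{k}(k-2)\to+\infty$, so ${}^{\mathcal{D}}\!E^{\Omega_{k}}(\bar x)\to+\infty$ and Corollary~\ref{thm_comparison_dirichlet} shows that $(M,g)$ is $L^{1}$-Liouville. The main obstacle is precisely this interplay: turning \eqref{RicLB} into a genuinely radial bound by centering at $\gamma(k)$ — the inequality $r_{\gamma(0)}(\sigma(s))\ge k-s$ — and then extracting the sharp asymptotics of the model mean exit time $f_{k}$, which is where the threshold $\frac{\sqrt m}{m-1}$ is forced; the remaining points (smoothing the balls so Corollary~\ref{thm_comparison_dirichlet} applies verbatim, the distributional Laplacian comparison across $\mathrm{Cut}(\gamma(k))$, and the barrier comparison on a possibly non-smooth ball) are routine.
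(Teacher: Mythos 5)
Your proposal is correct and follows essentially the same route as the paper: reduce via Corollary~\ref{thm_comparison_dirichlet} to the divergence of the Dirichlet mean exit times of balls $B_{t_k-1}(\gamma(t_k))$ centered along the ray, convert \eqref{RicLB} into a radial bound from the moving pole by the triangle inequality, and compare with the mean exit time of an Euler-type model, evaluating at $\gamma(2)$. Your only deviations are technical refinements rather than a different argument: you drop the $1$ in the denominator so that the model equation $h_k''=B^2(k-s)^{-2}h_k$ is exactly solvable (the paper's $\sigma_k$ is the same function with $\bar B=\tfrac12\sqrt{1+4B^2}$, $t_k=k$), and you smooth the metric balls before invoking the corollary; the exponent computation recovers the same sharp threshold $B<\sqrt{m}/(m-1)$.
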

\begin{proof}
We begin with the  more general assumption that for every positive integer $k$  the following inequality holds
\begin{equation}\label{ineq_laplacian_r_k}
\Delta r_{k}(x) \leq (m-1)\frac{{\sigma'}_k}{\sigma_k}(r_{k}(x)) \ \ \ \forall\, x \in B_{t_k}(\gamma(t_k)),
\end{equation}
where $t_k$ is an increasing sequence with $t_1>1,$ $ r_{k}(x)=d(x,\gamma(t_k))$ is the distance function from $ \gamma(t_k)$ and $ \sigma_k : [0,+\infty) \to [0,+\infty) $ is the warping function of an $m$-dimensional model manifold $M_{\sigma_k}$.

For each positive integer $k$ define the function $F_{k}: [0,t_k-1) \to [0,+\infty)$ by
\begin{equation*}
F_{k}(r) = \int_{r}^{t_k-1}\frac{\int_{0}^{t}\sigma^{m-1}_{k}(s)ds}{\sigma^{m-1}_{k}(t)}dt .
\end{equation*}
A simple computation shows that
\begin{equation*}
F_{k}^{'}(r) = - \frac{\int_{0}^{r}\sigma_{k}^{m-1}(s)ds}{\sigma_{k}^{m-1}(r)} < 0,
\end{equation*}
and, by assumption \eqref{ineq_laplacian_r_k}, the composition with the function $ r_k : B_{t_k}(\gamma(t_k)) \to [0,+\infty) $ satisfies
\begin{eqnarray*}
\Delta F_{k}(r_{k}(x)) &=& F_{k}^{''}(r_{k}(x)) + F_{k}^{'}(r_{k}(x))\Delta r_{k}(x) \\
&\geq & F_{k}^{''}(r_{k}(x)) + (m-1)F_{k}^{'}(r_{k}(x))\frac{{\sigma'}_k}{\sigma_k}(r_{k}(x)) \\
&=& \Delta_{\sigma_k}F_{k}(r_{k}(x)) \\
&=& -1 .
\end{eqnarray*}
Furthermore, since $ F_{k} = 0 $ on the boundary of $ B_{t_k-1}(\gamma(t_k))$, by the maximum principle
\begin{equation}\label{comparison01}
F_{k}(r_{k}(x)) \leq {}^{\D}\!E_{k}(x),
\end{equation}
where $ {}^{\D}\!E_{k}$ denotes the Dirichlet mean exit time of $ B_{t_k-1}(\gamma(t_k))$.

To illustrate the idea, suppose that $ \mathrm{Ric}\geq 0$ on  $M^+$, i.e., \eqref{RicLB} holds with $B=0.$ By the Laplacian comparison theorem we can take $ \sigma_k(s) = s$ for any $k$ and compute
\begin{equation*}\label{eqftk}
F_{k}(r_k) = \frac{(t_{k}-1)^{2}}{2m} - \frac{r_k^2}{2m}.
\end{equation*}
Therefore, by \eqref{comparison01}
\begin{equation*}
{}^{\D}\!E_{k}(x) \geq \frac{(t_{k}-1)^{2}}{2m} - \frac{r_{k}^{2}(x)}{2m},
\end{equation*}
whence, chosing $ x_1 = \gamma(2) \in B_{t_k-1}(\gamma(t_k))$ for every $ k $, we have $ r_{k}(x_1) = t_k - 2 $ and
\begin{eqnarray*}
{}^{\D}\!E_{k}(x_1) &\geq & \frac{(t_{k}-1)^{2}}{2m} - \frac{r_{k}^{2}(x_1)}{2m} \\
&=& \frac{(t_{k}-1)^{2}}{2m} - \frac{(t_k - 2)^2}{2m}\\
&=& \frac{2t_k - 3}{2m}.
\end{eqnarray*}
Letting $k \to +\infty$, we deduce that  $ {}^{\D}\!E_{k}(x_1) \to + \infty$, and by Corollary \ref{thm_comparison_dirichlet} $M$ is $L^1$-Liouville.

Note that the main ingredient in the above argument is
\begin{equation}\label{F_k_integral}
F_{k}(t_k - 2) = \int_{t_k - 2}^{t_k-1}\frac{\int_{0}^{t}\sigma_{k}^{m-1}(s)ds}{\sigma_{k}^{m-1}(t)}dt \to +\infty \ \ \text{as} \ \ t_k \to +\infty.
\end{equation}
Thus, if we assume that $\mathrm{Ric} \geq - \frac{(m-1)B^2}{1+r_{\gamma(0)}^2(x)}$ on $M^+$, for some constant $B$, then the triangle inequality implies that
 \[
 \mathrm{Ric} \geq
 \frac{B^2}{1+(t_k - r_{k})^2}, \quad \text{ for every  } \, x \in B_{t_k}(\gamma(t_k)),
 \]
 and, again by the Laplacian comparison theorem (cf. \cite[Theorem 2.4]{PRS_vanishing}),
  inequality \eqref{ineq_laplacian_r_k} holds with $\sigma_k \in C^2([0,+\infty))$ which solves
\begin{equation}\label{dir_prob_sigma_k}
\left\lbrace\begin{array}{l}
\sigma''_k -\frac{B^2}{1+(t_k - r_{k})^2}\sigma_k = 0 \\
\sigma_k(0)=0, \sigma'_k(0)=1.
\end{array}
\right.
\end{equation}
A  computation (see, e.g. \cite{BS-preprint}) shows that the solution $\sigma_k$ of \eqref{dir_prob_sigma_k} is explicitly given by
$$\sigma_k(s) = \frac{1}{2\bar B}t_{k}^{\frac{1}{2}+\bar B}(t_k - s)^{\frac{1}{2}-\bar B}\left[1-(1-\frac{s}{t_k})^{2\bar B}\right],$$
where $\bar B < \frac{\sqrt{4B^2 +1}}{2} \geq \frac{1}{2}$. Assume first that $s \in [0,t_{k}/2]$ and set $\lambda = s/t_k \in [0,1/2]$. From the mean value theorem, the function $\rho(\lambda) = 1 -(1-\lambda)^{2\bar B}$ satisfies $\rho(\lambda) = \rho'(\bar \lambda)\cdot\lambda$ for some $\bar \lambda \in (0,\lambda)$, and $\rho'(\bar \lambda) \geq 2\bar B(1-1/2)^{2\bar B -1} \doteq 2\bar B d_1 > 0.$ Thus, since $1/2 - \bar B \leq 0$, $ \sigma_k(s) \geq d_1 s$ for $s \in [0,\frac{t_k}{2}].$ When $s \in [t_k/2,t_k]$ we easily have that $ \sigma_k(s) \geq d_2 s,$ with $d_2 > 0$ depending only on $\bar B$. Hence, $\sigma_k(s) \geq \bar d s$ for $s \in [0,t_k]$  with $ \bar d= \min\{d_1,d_2\}$. Therefore, we compute
\begin{eqnarray*}
F_{k}(t_k - 2) = \int_{t_k - 2}^{t_k - 1}\frac{\int_{0}^{t}\sigma_{k}^{m-1}(s)ds}{\sigma_{k}^{m-1}(t)}\,dt &\geq & \int_{t_k - 2}^{t_k - 1}\frac{\frac{\bar d}{m}t^m\,dt}{(\frac{1}{2\bar B})^{(m-1)}t_{k}^{(m-1)(\frac{1}{2}+\bar B)}(t_1/2)^{(m-1)(\frac{1}{2}-\bar B)}} \\
&\geq & c \int_{t_k - 2}^{t_k - 1}\frac{(t_k - 2)^m\,dt}{t_{k}^{(m-1)(\frac{1}{2}+\bar B)}} \\
&\geq & c \frac{(t_k - 2)^m}{t_{k}^{(m-1)(\frac{1}{2}+\bar B)}},
\end{eqnarray*}
and conclude that \eqref{F_k_integral} holds since $\bar B < \frac{m+1}{2(m-1)}$ by the assumption that $B<\sqrt m/(m-1)$.
\end{proof}

\subsection{Pinched negative curvature in the complement of a hyperbolic half space}
In contrast with what happens in the case of nonnegative curvature, in the setting of negative curvature half spaces are not large enough to satisfy the $\D$-$L^1$-Liouville property, and therefore, to conclude the validity of the global $L^1$-Liouville property. We are going to show that the appropriate domains are represented by the complement of a half space.\smallskip

Throughout this subsection we consider $M$ to be an $m$-dimensional complete, simply connected Riemannian manifold satisfying, for some $B \geq A > 0$, the pinched curvature assumptions
\begin{equation}\label{ch-conditions}
\mathrm{Ric} \geq -(m-1) B^2 \quad \text{and} \quad \sect \leq -A^2.
\end{equation}
Obviously, these conditions imply that the (Cartan-Hadamard) manifold $M$ has a lower sectional curvature bound.

Let $\gamma : [0 , +\infty) \to M$ be a fixed unit-speed ray and let $b_{\gamma} : M \to \rr$ be the associated Busemann function. By definition
\[
b_{\gamma} (x) = \lim_{t\to + {\infty}} r_{t}(x) - t,
\]
where $r_{t}(x)=\mathrm{dist}_{M}(x,\gamma(t))$.
Then, the open horoball of $M$ with respect to $\gamma$ is defined by
\[
\cb_{\gamma}(R) = \{x \in M : b_{\gamma}(x) < R\}.
\]
Note that $\cb_{\gamma}(R)$ is a half space with respect to the unit-speed ray $\gamma$. Namely,
 \[
 \cb_{\gamma}(R) = \cup_{t \geq 0} B_{t+R}(\gamma(t)).
 \]
The verification is straightforward using the very definition of Busemann function.

\begin{example}\label{ex_horoball_D_parab}\rm
The closed  horoball $\bar\cb_{\gamma}^{2}(R)$ of the $2$-dimensional hyperbolic space $\mathbb{H}^{2}$ is $\mathcal{D}$-parabolic, regardless of the fact that its volume growth is exponential. Indeed, in the half plane model of the hyperbolic plane, $\bar \cb_{\gamma}^{2}(R)$ is realized as a closed half plane. Now, it is immediate to see that, in the $2$-dimensional setting, $\mathcal{D}$-parabolicity is invariant under conformal maps. Since $\bar \cb_{\gamma}^{2}(R)$ is conformal to the flat half plane $\bar \Pi^{+} $, we are reduced to prove that $\bar \Pi^{+}$ is $\mathcal{D}$-parabolic. To this end, observe that the volume growth of the flat manifold with boundary $\bar \Pi^{+}$ is at most quadratic. It follows from Proposition \ref{vol-growth-D-parabolicity} that $\bar \Pi^{+} $ is $\N$-parabolic, and therefore $\mathcal{D}$-parabolic. Hence $\bar \cb_{\gamma}^{2}(R)$ is a $\mathcal{D}$-parabolic manifold, as claimed.
\end{example}

The same conclusion holds in general dimensions  for  Cartan-Hadamard manifolds satisfying \eqref{ch-conditions}, but we need a completely different argument.
\begin{proposition}
Let $M$ be a Cartan-Hadamard manifold satisfying \eqref{ch-conditions}. Then the closed horoball $\bar \cb_{\gamma}(R)$ of $M$ with respect to the geodesic ray $\gamma$ is $\mathcal{D}$-parabolic.
\end{proposition}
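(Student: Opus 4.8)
The strategy is to verify the Khas'minskii test for $\D$-parabolicity (Lemma~\ref{Khas-D-parabolicity}): I will produce a compact set $K$ and a nonnegative, proper, weakly superharmonic function $\phi$ on $\bar\cb_\gamma(R)$ outside $K$. Since the volume of $\bar\cb_\gamma(R)$ grows exponentially, the volume–growth criterion (Proposition~\ref{vol-growth-D-parabolicity}) is useless here, which is exactly why a different mechanism is needed. The decisive structural point is that a sequence $x_j\in\bar\cb_\gamma(R)$ with $d(x_j,o)\to+\infty$ (for a fixed basepoint $o$) need \emph{not} satisfy $b_\gamma(x_j)\to-\infty$: points may escape to infinity ``along the horospheres''. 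Consequently $\phi$ cannot be a function of $b_\gamma$ alone, and I will take it of the form (Busemann term) $+$ (radial term).

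First I would collect the two comparison estimates supplied by the pinching \eqref{ch-conditions}. Because $M$ is Cartan--Hadamard, the distance $r_o=d(\cdot,o)$ from a fixed interior point $o\in\cb_\gamma(R)$ is smooth on $M\setminus\{o\}$, and $\mathrm{Ric}\ge-(m-1)B^2$ gives, via the Laplacian comparison theorem, $\Delta r_o\le(m-1)B\coth(Br_o)$; in particular $\Delta r_o\le(m-1)B\coth(B\rho)$ on $\{r_o\ge\rho\}$ for $\rho\ge1$. On the other hand $\sect\le-A^2$ forces the horospheres to be mean convex with the right sign: the Busemann function $b=b_\gamma$, which under the pinching is of class $C^2$ (so the computations below are classical; in any case the relevant inequality holds in the barrier, hence distributional, sense), satisfies $\Delta b\ge(m-1)A$ on $M$, as follows by letting $t\to+\infty$ in $\Delta r_t\ge(m-1)A\coth(Ar_t)$ with $r_t=d(\cdot,\gamma(t))$.

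Next, pick a constant $0<\beta<(m-1)A$ and set
\[
\phi=e^{-\beta b}+\log(1+r_o).
\]
A direct computation gives $\Delta\bigl(e^{-\beta b}\bigr)=\beta e^{-\beta b}\bigl(\beta-\Delta b\bigr)\le-\beta\bigl((m-1)A-\beta\bigr)e^{-\beta b}$, and since $b<R$ on the horoball this is at most $-2\delta$, where $\delta:=\tfrac12\beta\bigl((m-1)A-\beta\bigr)e^{-\beta R}>0$ is a \emph{uniform} constant (the quantity in fact tends to $-\infty$ as $b\to-\infty$). In contrast, $\Delta\bigl(\log(1+r_o)\bigr)=\tfrac{\Delta r_o}{1+r_o}-\tfrac1{(1+r_o)^2}\le\tfrac{(m-1)B\coth(Br_o)}{1+r_o}\to0$ as $r_o\to+\infty$, so I may fix $\rho_0\ge1$ with $\tfrac{(m-1)B\coth(B\rho_0)}{1+\rho_0}\le\delta$. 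Taking $K=\bar B_{\rho_0}(o)\cap\bar\cb_\gamma(R)$, on $\bar\cb_\gamma(R)$ outside $K$ one has $r_o\ge\rho_0$, hence $\Delta\phi\le-2\delta+\delta=-\delta<0$; moreover $\phi>0$ there, $\phi$ extends continuously to the bounding horosphere, and $\phi\to+\infty$ as $x$ diverges in $\bar\cb_\gamma(R)$ since then $r_o(x)\to+\infty$. Lemma~\ref{Khas-D-parabolicity} then yields the $\D$-parabolicity of $\bar\cb_\gamma(R)$.

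I expect the main obstacle to be precisely this ``sideways'' escape: neither $e^{-\beta b}$ (not proper on the horoball) nor $\log(1+r_o)$ (subharmonic for large $r_o$) works by itself, so the point is to superpose a slowly growing, mildly subharmonic radial term on a Busemann term whose Laplacian is \emph{uniformly} bounded away from $0$ on $\{b<R\}$. This last effect genuinely exploits the confinement $b<R$ — on all of $M$ no such proper positive superharmonic function can exist, since $M$ is non-parabolic — together with the correct sign of the mean curvature of the horospheres. The only technical nuisance is the regularity of $b_\gamma$ under mere pinching, handled either by the $C^2$-regularity of horospheres or by reading the comparison inequality in the barrier sense and recalling that barrier supersolutions are weak supersolutions.
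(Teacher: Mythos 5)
Your argument is correct and follows essentially the same route as the paper: an application of the Khas'minskii test (Lemma~\ref{Khas-D-parabolicity}) to a function splitting into a Busemann term, made superharmonic with a definite margin by $\sect\le -A^2$, plus a mildly subharmonic radial correction controlled via $\mathrm{Ric}\ge -(m-1)B^2$, which supplies properness. The only difference is the explicit profile — you take $e^{-\beta b_\gamma}+\log(1+r_o)$ where the paper uses $(R-b_\gamma)+c\,r_0$ with $c>0$ small — and both verifications rest on the same Hessian/Laplacian comparison estimates for $b_\gamma$ and the distance function.
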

\begin{proof}
 It is an application of Lemma \ref{Khas-D-parabolicity}. Indeed, assume $R > 1$ and let $\phi : \bar \cb_{\gamma}(R) \backslash B_{1}(\gamma(0)) \to \rr$ be the smooth function defined by
\[
\phi(x) = ( R - b_{\gamma}(x) ) + c\cdot r_{0}(x)
\]
where $c >0$ is a constant to be specified. Then, $\phi$ is positive and proper. Moreover, by the Hessian and Laplacian comparison theorems (see e.g. \cite {BLPS} for the Hessian estimate of $b_{\gamma}$),
\[
\Delta \phi \leq -(m-1)A + c \cdot  (m-1)B\mathrm{coth}(Br_{0}),
\]
and choosing $c$ sufficiently small
\[
\Delta \phi \leq 0 \quad \text{on }\,\cb_{\gamma}(R) \backslash B_{1}(\gamma(0)).
\]
Now, the desired conclusion  follows from Lemma \ref{Khas-D-parabolicity}.
\end{proof}

As stated at the beginning of this subsection the closed horoball $\bar \cb_{\gamma}(R)$ is not $\D$-$L^1$-Liouville. Taking into account Theorem \ref{thm_D_L_1} it suffices to show that the Dirichlet mean exit time ${}^{\D}\!E^{\bar \cb_{\gamma}(R)}$ of $\bar \cb_{\gamma}(R)$ is a finite function.
Indeed, let
\begin{equation}\label{function_E_horoball}
E_{R} = \frac{R -b_{\gamma}}{(m-1)A}.
\end{equation}
As above, the curvature condition implies that $\Delta E_R\leq -1$ on $\cb_{\gamma}(R)$. Since  $E_R\geq 0$ on $\bar \cb_{\gamma}(R)$, it follows from Lemma~\ref{UpBndMeanExitTime} that $ E_R$ is an upper bound for ${}^{\D}\!E^{\bar \cb_{\gamma}(R)}$ on $\bar \cb_{\gamma}(R)$, which is therefore finite.

Actually, since $\bar \cb_{\gamma}(R)$ is $\D$-parabolic, when $M$ is a hyperbolic space $\hh(-A^2)$, the function defined in \eqref{function_E_horoball} is precisely the Dirichlet mean exit time of the horoball.

Let $\complement{\cb_{\gamma}(R)} = \{x \in M : b_{\gamma}(x) \geq R\}$ be the complement of a half space in $M$. Despite of the fact that every half space (horoball) is $\D$-parabolic, this is not the case for its complement. In fact, a simple computation using the Hessian comparison theorem, \cite{BLPS}, shows that the function
$$ u(x) = e^{-A(m-1)R} - e^{-A(m-1)b_{\gamma}}$$
is a nonnegative bounded solution of
\begin{equation*}
\left\{ \begin{array}{rl}
\Delta u  \geq  0 & \text{ in }  \ \inte\,\complement{\cb_{\gamma}(R)}  \\
u = 0 & \text{ on }  \ \partial \complement{\cb_{\gamma}(R)}.
\end{array} \right.\\
\end{equation*}
By Proposition \ref{DirParSubharmonic} the manifold with boundary $\complement{\cb_{\gamma}(R)}$ is $\D$-hyperbolic. In particular, a solution, if any, of the problem
\begin{equation*}
\left\{ \begin{array}{rl}
\Delta E + 1  =  0 & \text{ in }  \ \inte\,\complement{\cb_{\gamma}(R)}  \\
E = 0 & \text{ on }  \ \partial \complement{\cb_{\gamma}(R)},
\end{array} \right.\\
\end{equation*}
may not be the mean exit time of $\complement{\cb_{\gamma}(R)}$ because there is no global uniqueness even when $E$ is bounded.

\begin{theorem}\label{th-negativecurv}
Let $(M,g)$ be a Cartan-Hadamard manifold satisfying $\mathrm{Ric} \geq -(m-1)B^2$ and $\mathrm{Sec} \leq -A^2$, for some constants $B \geq A >0$, in the complement $\complement \cb_{\gamma}(R)$ of a half space $\cb_{\gamma}(R)$ with respect to some geodesic ray $\gamma$. If $h$ is any Riemannian metric on $M$ satisfying $h =g$ on $\complement \cb_{\gamma}(R)$, then $(M,h)$ is $L^1$-Liouville.
\end{theorem}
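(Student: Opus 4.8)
The plan is to localise everything on the region $\complement\cb_{\gamma}(R)$, where by hypothesis $h=g$, and to feed it into Corollary~\ref{thm_comparison_dirichlet}. Indeed, $(\complement\cb_{\gamma}(R),g)$ is isometric to the corresponding subdomain of $(M,h)$ and is the closure of a smooth open submanifold with boundary the horosphere $\{b_{\gamma}=R\}$; so once we know $(\complement\cb_{\gamma}(R),g)$ is $\D$-$L^{1}$-Liouville, Corollary~\ref{thm_comparison_dirichlet} gives at once that $(M,h)$ is $L^{1}$-Liouville, the metric $h$ on $\cb_{\gamma}(R)$ being irrelevant. By Theorem~\ref{thm_D_L_1}, $(\complement\cb_{\gamma}(R),g)$ is $\D$-$L^{1}$-Liouville as soon as its Dirichlet mean exit time ${}^{\D}\!E$ fails to be finite; hence it is enough to exhibit a single point $\bar x$ with $b_{\gamma}(\bar x)>R$ at which ${}^{\D}\!E(\bar x)=+\infty$, which I would do by estimating ${}^{\D}\!E^{\Omega_{k}}(\bar x)$ from below along an exhaustion $\Omega_{k}\nearrow\inte\,\complement\cb_{\gamma}(R)$.

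The engine is the Busemann function. By the Hessian and Laplacian comparison theorems applied to $b_{\gamma}$ (see \cite{BLPS}, \cite{PRS_vanishing}), the curvature conditions on $\complement\cb_{\gamma}(R)$ yield
\[
(m-1)A\le\Delta b_{\gamma}\le(m-1)B\qquad\text{on }\inte\,\complement\cb_{\gamma}(R).
\]
The strictly positive lower bound $(m-1)A>0$ is the decisive point: it forces an outward drift of $b_{\gamma}$ away from the Dirichlet boundary $\{b_{\gamma}=R\}$ into the region where the level sets of $b_{\gamma}$ have volume growing at least like $e^{(m-1)A\,b_{\gamma}}$, so trajectories issuing from $\bar x$ should need a time comparable to the slab width before being absorbed. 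Concretely, for each large $R'$ I would build a ``bump'' $\underline\phi=\underline\phi_{R'}\in C^{1}([R,R'])$ with $\underline\phi\ge 0$, $\underline\phi(R)=\underline\phi(R')=0$, increasing then decreasing, by patching a solution of $\underline\phi''+(m-1)A\,\underline\phi'=-1$ on the rising branch with a solution of $\underline\phi''+(m-1)B\,\underline\phi'=-1$ on the falling branch; using $\Delta b_{\gamma}\ge(m-1)A$ where $\underline\phi'\ge0$ and $\Delta b_{\gamma}\le(m-1)B$ where $\underline\phi'<0$ one checks $\Delta(\underline\phi\circ b_{\gamma})\ge-1$ on $\{R<b_{\gamma}<R'\}$, and the free constant in the rising branch can be chosen so that $\underline\phi_{R'}(b_{\gamma}(\bar x))\to+\infty$ as $R'\to+\infty$.

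The hard part is turning this into a genuine lower bound for ${}^{\D}\!E^{\Omega_{k}}(\bar x)$ with $\Omega_{k}$ \emph{relatively compact}: $\underline\phi\circ b_{\gamma}$ vanishes on the horospheres $\{b_{\gamma}=R\}$ and $\{b_{\gamma}=R'\}$ but not on the ``lateral'' part of $\partial\Omega_{k}$ produced by truncating in the horospherical directions, and because the complement of the convex horoball is far from geodesically convex, distance functions to interior points have minimising geodesics leaving $\complement\cb_{\gamma}(R)$, where no curvature bound is at our disposal. I see two routes. One is to multiply $\underline\phi\circ b_{\gamma}$ by a slowly varying cut-off of the intrinsic distance inside the horospheres $\{b_{\gamma}=c\}$ — whose induced metrics have nonnegative sectional curvature, by Gauss' equation together with $\sect\le-A^{2}$ and $\Hess b_{\gamma}\ge A(g-db_{\gamma}\otimes db_{\gamma})$, so that the ambient Laplacian of this cut-off is controlled — and to absorb the resulting error terms, which are $O(1/k)$ once the free constant of $\underline\phi_{R'}$ is let grow slower than the cut-off scale, thereby producing a nonnegative subsolution vanishing on all of $\partial\Omega_{k}$ and with value at $\bar x$ tending to $+\infty$. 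The other is to prove directly that the slab $S_{R'}=\{R\le b_{\gamma}\le R'\}$ is $\D$-parabolic, so that by Proposition~\ref{DirParSubharmonic} its unique bounded Dirichlet mean exit time dominates $\underline\phi\circ b_{\gamma}$; when the ambient metric is hyperbolic this is immediate, since $S_{R'}$ sits inside the $\D$-parabolic closed horoball $\bar\cb_{\gamma}(R')$ (Corollary~\ref{D-parabolicity of subdomains}), but under the localised hypothesis it must be extracted from the geometry of $S_{R'}$ alone, exploiting its finite width $R'-R$ in the $b_{\gamma}$-direction. Either way one obtains ${}^{\D}\!E^{\complement\cb_{\gamma}(R)}(\bar x)\ge{}^{\D}\!E^{S_{R'}}(\bar x)\ge\underline\phi_{R'}(b_{\gamma}(\bar x))\to+\infty$ (the first inequality by monotonicity of the mean exit time in the domain), hence ${}^{\D}\!E^{\complement\cb_{\gamma}(R)}(\bar x)=+\infty$; by Theorem~\ref{thm_D_L_1} this says $(\complement\cb_{\gamma}(R),g)$ is $\D$-$L^{1}$-Liouville, and Corollary~\ref{thm_comparison_dirichlet} concludes the proof.
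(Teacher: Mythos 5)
Your overall strategy is the paper's: reduce via Corollary~\ref{thm_comparison_dirichlet} and Theorem~\ref{thm_D_L_1} to showing that Dirichlet mean exit times blow up, produce on the horoannuli $\mathrm{A}_{R'}=\{x: R\le b_{\gamma}(x)\le R'\}$ a function of $b_{\gamma}$ with $\Delta\ge -1$, nonpositive on the boundary horospheres and diverging at a fixed interior point as $R'\to+\infty$ (the paper's explicit $E_{k}$ does exactly what your patched ODE bump does, using the same comparisons $(m-1)A\le\Delta b_{\gamma}\le(m-1)B$), and then dominate it by ${}^{\D}\!E$ of the annulus. The gap is the step you yourself single out as the hard part: the comparison on the \emph{noncompact} annulus is never actually proved. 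Your second route is precisely the paper's mechanism: the closed horoball $\bar\cb_{\gamma}(R')$ is $\D$-parabolic (the unlabeled Proposition earlier in the subsection, proved by the Khas'minskii test of Lemma~\ref{Khas-D-parabolicity} with $\phi=(R'-b_{\gamma})+c\,r_{0}$), hence $\mathrm{A}_{R'}$ is $\D$-parabolic by Corollary~\ref{D-parabolicity of subdomains}, and Proposition~\ref{DirParSubharmonic}, applied to the positive part of the difference between your subsolution and ${}^{\D}\!E^{\mathrm{A}_{R'}}$, gives the domination. You set this route aside because the curvature hypotheses are only assumed on $\complement\cb_{\gamma}(R)$ and announce an argument ``from the geometry of the slab alone'', but no such argument is supplied, so the proposal stops exactly where a proof is required. (Your scruple is not frivolous, since the Khas'minskii function is needed on the whole horoball minus a compact set; but the comparison estimates for $b_{\gamma}$ and $r_{0}$ that you already accept on the region are of the same nature, and the paper simply runs this route. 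You must either do likewise or actually produce the slab argument; as written you do neither.)

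Your first route does not close the gap either: the claim that the horospheres carry nonnegative intrinsic sectional curvature does not follow from Gauss' equation under the stated hypotheses. Gauss' equation gives $K^{\Sigma}=K^{M}+\lambda_{i}\lambda_{j}$ with $\lambda_{i}\ge A$ from $\Hess b_{\gamma}\ge A(g-db_{\gamma}\otimes db_{\gamma})$, and to conclude $K^{\Sigma}\ge 0$ one needs a \emph{sectional lower} bound $K^{M}\ge -A^{2}$; only $\ric\ge -(m-1)B^{2}$ with $B\ge A$ is assumed, and $\sect\le -A^{2}$ goes the wrong way. Moreover the absorption of the cut-off error terms is only asserted. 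Apart from this missing step the skeleton matches the paper; the only cosmetic difference is that the paper does not pass through the $\D$-$L^{1}$-Liouville property of the whole complement but applies the last clause of Corollary~\ref{thm_comparison_dirichlet} directly to the annuli $\mathrm{A}_{R'}$, which lie in the region where $h=g$.
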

\begin{proof}
Fix $R_k > R$ be a strictly increasing divergent sequence of positive constants and let
$$\mathrm{A}_k = \{x \in M : R \leq b_{\gamma}(x) \leq R_k\}$$
be the horoannulus contained in $\complement \cb_{\gamma}(R)$. Since $\mathrm{A}_k$ is a smooth subset of $\cb_{\gamma}(R_k)$ and every horoball is $\D$-parabolic, by Corollary \ref{D-parabolicity of subdomains} $\mathrm{A}_k$ is also $\D$-parabolic.
A straightforward application of comparison arguments (cf. \cite{BLPS}) shows that the function
$$ E_{k}(x) = - \frac{R_k -R}{(m-1)A}\cdot\frac{e^{-(m-1)Ab_{\gamma}(x)}-e^{-(m-1)AR_k}}{e^{-(m-1)AR}-e^{-(m-1)AR_k}} + \frac{R_k -b_{\gamma}(x)}{(m-1)B}$$
satisfies $\Delta E_{k}(x)\geq -1$ in $\mathrm{A}_{k}$. We claim that $ E_{k}(x)\leq {}^{\D}E^{\mathrm{A}_{k}}(x)$ where ${}^{\D}E^{\mathrm{A}_{k}}$ is the Dirichlet mean exit time of $\mathrm{A}_{k}$.

Indeed, define $\Omega = \{x \in \mathrm{A}_k : {}^{\D}E^{\mathrm{A}_{k}}(x) - E_{k}(x) < 0\}$ and consider $u(x) = E_{k}(x) - {}^{\D}E^{\mathrm{A}_{k}}(x)$. Is is clear that $u$ is bounded and subharmonic on $\Omega$. Being $\mathrm{A}_{k}$ $\D$-parabolic, by the Ahlfors characterization of $\D$-parabolicity (cf. Proposition \ref{DirParSubharmonic}),
$$ \sup_{\Omega} u = \sup_{\partial \Omega} u = 0.$$
Therefore, $E_{k}(x) \leq {}^{\D}E^{\mathrm{A}_{k}}(x)$ on $\mathrm{A}_{k}$.
Letting $k \to + \infty$ we have ${}^{\D}E^{\mathrm{A}_{k}}(x) \to + \infty$ and the conclusion follows by Corollary \ref{thm_comparison_dirichlet}.
\end{proof}

\begin{remark}\rm
We note that, when $A=B$ in the above theorem, the function
$$ E_{k}(x) = - \frac{R_k -R}{(m-1)A}\cdot\frac{e^{-(m-1)Ab_{\gamma}(x)}-e^{-(m-1)AR_k}}{e^{-(m-1)AR}-e^{-(m-1)AR_k}} + \frac{R_k -b_{\gamma}(x)}{(m-1)A},$$
is precisely the Dirichlet mean exit time of $\mathrm{A}_k$.
\end{remark}

\subsection{Manifolds containing large warped product cones}

We now consider the Dirichlet mean exit time of cones in warped products. Let $M=[0,+\infty)\times_{r}\Sigma$ be a warped product where $\Sigma$ is a closed smooth Riemannian manifold with dimension $(m-1)$. For any domain  $\Omega$ in $\Sigma$ with smooth boundary we denote by $C_\Omega$ the cone in $M$ over $\Omega$, namely   $C_\Omega = \{(r,\theta) \,: \, r\geq 0, \, \theta\in \Omega\}$. We denote by $\lambda_1(\Omega)$ the smallest Dirichlet eigenvalue of  $\Delta_\Sigma$ on $\Omega$. It is well known that $\lambda_1(\Omega)$ is simple and that its eigenfunctions have constant sign.

The next proposition shows that the fact that $C_\Omega$ is $\D$-$L^1$-Liouville depends on $\lambda_1(\Omega)$.
Similar computations and related results can be found in \cite[Theorem 3.1]{DevyverPinchoverPsaradakis}.

\begin{proposition}
\label{L1-Liouville-cones} Let $C_\Omega$ be the cone over $\Omega\subset \Sigma$ in the warped product $M=[0,+\infty)\times_{r}\Sigma$.
\begin{itemize}
\item[(i)] If $\lambda_1(\Omega)>2m$ then $C_\Omega$ has finite Dirichlet mean exit time (and it is therefore not Dirichlet $L^1$-Liouville);
\item[(ii)] If $\lambda_1 (\Omega)\leq 2m$ then $C_\Omega$ has infinite Dirichlet mean exit time.
\end{itemize}
\end{proposition}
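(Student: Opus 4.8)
The plan is to exploit the warped product structure to separate variables and reduce both assertions to a one--dimensional analysis. Write the metric of $M=[0,+\infty)\times_r\Sigma$ as $dr^2+r^2 g_\Sigma$, so that a function $u=u(r,\theta)$ satisfies
\[
\Delta u=\partial_r^2 u+\frac{m-1}{r}\,\partial_r u+\frac{1}{r^2}\,\Delta_\Sigma u .
\]
Let $\varphi_1>0$ be a first Dirichlet eigenfunction of $-\Delta_\Sigma$ on $\Omega$, so $\Delta_\Sigma\varphi_1=-\lambda_1\varphi_1$ in $\Omega$, $\varphi_1=0$ on $\partial\Omega$, normalized by $\int_\Omega\varphi_1\,d\theta=1$. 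Recalling that the Dirichlet boundary of $C_\Omega$ is $[0,+\infty)\times\partial\Omega$, and that ${}^{\D}\!E^{C_\Omega}$ is either finite everywhere or identically $+\infty$ (the dichotomy in the exhaustion construction of the mean exit time recorded before Lemma~\ref{UpBndMeanExitTime}), part (i) reduces to producing a finite supersolution for the mean exit time and part (ii) to ruling out a finite one.

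For (i), assume $\lambda_1>2m$. Then the bottom of the Dirichlet spectrum of $-\Delta_\Sigma-2m$ on $\Omega$ equals $\lambda_1-2m>0$, so the problem $\Delta_\Sigma f+2mf=-1$ in $\Omega$, $f=0$ on $\partial\Omega$, has a solution $f\in C^\infty(\bar\Omega)$; since $\Delta_\Sigma f=-1-2mf$ and $f$ vanishes on $\partial\Omega$, the strong maximum principle gives $f>0$ in $\Omega$ (equivalently, $f$ is the convergent time integral of the positivity--preserving semigroup $e^{t(\Delta_\Sigma+2m)}$ on $\Omega$). Setting $\bar E(r,\theta):=r^2 f(\theta)$, the displayed formula for $\Delta$ yields $\Delta\bar E=2mf+\Delta_\Sigma f=-1$ in $\inte\,C_\Omega$, with $\bar E\ge 0$ on $C_\Omega$ and $\bar E=0$ on $\partial C_\Omega$. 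By Lemma~\ref{UpBndMeanExitTime}, ${}^{\D}\!E^{C_\Omega}\le\bar E<+\infty$, whence $C_\Omega$ is not $\D$-$L^1$-Liouville by Theorem~\ref{thm_D_L_1}.

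For (ii), assume $\lambda_1\le 2m$ and suppose by contradiction that ${}^{\D}\!E:={}^{\D}\!E^{C_\Omega}$ is finite. Being the mean exit time of a manifold smooth in $\inte\,C_\Omega$ and smooth up to the lateral boundary $\{r>0\}\times\partial\Omega$, the function $v(r):=\int_\Omega{}^{\D}\!E(r,\theta)\varphi_1(\theta)\,d\theta\ge 0$ is well defined and $C^2$ on $(0,+\infty)$. Multiplying $\Delta\,{}^{\D}\!E=-1$ by $\varphi_1$ and integrating over $\Omega$ at fixed $r$ (the boundary terms in Green's identity vanish because $\varphi_1$ and ${}^{\D}\!E(r,\cdot)$ both vanish on $\partial\Omega$) gives the Cauchy--Euler equation
\[
v''+\frac{m-1}{r}\,v'-\frac{\lambda_1}{r^2}\,v=-1\qquad\text{on }(0,+\infty).
\]
Its homogeneous solutions are $r^{\alpha_\pm}$ with $\alpha_\pm=\tfrac12\big(-(m-2)\pm\sqrt{(m-2)^2+4\lambda_1}\big)$, so $\alpha_-<0<\alpha_+$ and, crucially, $\alpha_+\le 2$ exactly when $\lambda_1\le 2m$; a particular solution is $\frac{r^2}{\lambda_1-2m}$ when $\lambda_1<2m$ and $-\frac{r^2\log r}{m+2}$ when $\lambda_1=2m$. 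In either case every solution of the ODE equals a strictly negative multiple of $r^2$ (resp.\ of $r^2\log r$) plus $Ar^{\alpha_+}+Br^{\alpha_-}$, and since $\alpha_+\le 2$ this leading term dominates as $r\to+\infty$, forcing $v(r)\to-\infty$ and contradicting $v\ge 0$. Hence ${}^{\D}\!E^{C_\Omega}\equiv+\infty$.

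The warped--product identities and the positivity of $f$ in (i) are routine; the delicate step is (ii), where one must justify that $v$ is a genuine $C^2$ solution of the Cauchy--Euler equation on all of $(0,+\infty)$ — that is, boundary regularity of ${}^{\D}\!E$ along the smooth lateral boundary, together with differentiation under the integral sign — and must treat the resonant value $\lambda_1=2m$ separately. The (possibly singular) vertex $r=0$ plays no role, since the contradiction is produced entirely by the behaviour as $r\to+\infty$.
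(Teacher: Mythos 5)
Your part (i) is essentially the paper's argument: an explicit nonnegative supersolution of the form $r^2$ times a function on $\Omega$, fed into Lemma~\ref{UpBndMeanExitTime}. The only variation is that you solve $(\Delta_\Sigma+2m)f=-1$ on $\Omega$ itself instead of taking, as the paper does, a positive eigenfunction on a slightly enlarged domain $\Omega'$ with $\lambda_1(\Omega')>2m$ and rescaling. One caveat: the ``strong maximum principle'' does not apply directly to $\Delta_\Sigma f+2mf=-1$, because the zeroth-order coefficient has the wrong sign; your parenthetical justification (positivity of the resolvent $(-\Delta_\Sigma-2m)^{-1}$ as a convergent time integral of the positivity-preserving semigroup, which converges precisely because $\lambda_1(\Omega)>2m$) is the correct one and should replace the maximum-principle phrase.

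Part (ii) takes a genuinely different route (projection of ${}^{\D}\!E$ onto $\varphi_1$ and an ODE contradiction), and the ODE analysis itself is correct, including the resonant case $\lambda_1=2m$; but the step you yourself flag as ``delicate'' is a genuine gap, not a routine verification. Your contradiction requires that $v(r)=\int_\Omega{}^{\D}\!E(r,\theta)\varphi_1(\theta)\,d\theta$ be \emph{finite}, of class $C^2$, and a classical solution of the Euler equation, with the Green-identity boundary terms vanishing. All of this presupposes that the (assumed finite) mean exit time is bounded near the lateral boundary $\{r>0\}\times\partial\Omega$, smooth up to it, and has zero trace there. The dichotomy from the exhaustion only gives finiteness at interior points; a priori ${}^{\D}\!E$ could blow up as $\theta\to\partial\Omega$, in which case $v\equiv+\infty$ and the whole computation is vacuous. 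Proving the needed boundary behaviour is possible (e.g.\ via the representation ${}^{\D}\!E=\int_M{}^{\D}\!G(\cdot,y)\,dy$ of Theorem~\ref{thm_D_L_1} combined with a boundary Harnack estimate for ${}^{\D}\!G(\cdot,y)$ uniform in $y$, followed by Schauder estimates up to the boundary), but this is a substantive piece of work that you neither carry out nor can cite from the paper. The paper sidesteps the issue entirely: it works on truncated cones, where the Dirichlet mean exit time is a classical solution vanishing on the whole boundary, compares it \emph{pointwise} from below with the explicit product subsolution $h_R(r)u(\theta)$ (your $h_R$ is essentially the same function), and lets $R\to+\infty$ at a fixed interior point, using monotonicity of exit times under domain inclusion. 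The simplest repair of your proof is to adopt that pointwise comparison on truncations rather than the projected ODE for the full cone; note that merely projecting the truncated exit times would again only give divergence of slice integrals, not pointwise divergence, so the pointwise subsolution comparison (or the boundary estimate) is really needed.
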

\begin{proof}
Assume first that $\lambda_1(\Omega)>2m$ and let $\Omega'\supset \bar \Omega$ be such that $\lambda'=\lambda_1(\Omega')>2m$.
Let $u$ be an eigenfunction belonging to $\lambda'$. By what recalled above, we may choose $u$ to be strictly positive in $\Omega'$ and therefore $\min_{\bar\Omega}u =u_o>0$. We consider the function $\varphi(r,\theta)=r^2u(\theta)$. From the expression of the Laplacian of $M$,
\[
\Delta  = r^{-m+1} \frac{\partial}{\partial r} \left(
r^{m-1}\frac{\partial}{\partial r} \right) + \frac{1}{r^2}\Delta_\Sigma,
 \]
and using $u\geq u_o$ on $\Omega$, and $\lambda' >2m$ we deduce that
\[
\Delta\varphi = -(\lambda'- 2m)u(\theta)\leq -(\lambda' - 2m) u_o,
\]
so that there exists a constant $c$ such that
\[
\Delta (c\varphi) \leq -1.
\]
The conclusion follows from Lemma \ref{UpBndMeanExitTime}.

To prove (ii) we assume first that $\lambda=\lambda_1(\Omega)<2m$. For every $R>0$ we  let $\varphi_R(r,\theta) = h_R(r)u(\theta)$ where $u$ is a positive eigenfunction belonging to $\lambda$ normalized in such a way that
  $\max_\Omega u=u(\theta_o)=1,$ and
\[
h_R(r)= \frac{1}{2m-\lambda}\left[ \left(\frac{r}{R}\right)^{\alpha} R^2 - r^2\right],
\]
with
\[
\alpha = \frac{-(m-2) +\sqrt{(m-2)^2 +4\lambda}}2.
\]
Note that, since $\lambda<2m$, $\alpha<2$ and therefore $h_R(1)\to +\infty$ as $R\to +\infty$.

A straightforward computation shows that $h_R$ is a solution of the problem
\begin{equation*}
\begin{cases}
\displaystyle{h_R'' + \frac{m-1} r h'_R -  \frac{\lambda}{r^2}h_R= -1} &\\
h_R(0)=h_R(R)=0, \,\, h_R(r)>0 \,\text{ in  } \,(0,R), &
\end{cases}
\end{equation*}
so that, recalling the expression of the Laplacian of the warped product $M$ we see that $\varphi_R$ satisfies
\[
\Delta \varphi_R (r,\theta)= -u(\theta)\geq -1, \quad \varphi_R=0 \,\text{ on } \partial C_{\Omega, R}.
\]
By comparison, the mean exit time ${}^{\D}\!E^{C_{\Omega, R}}$ of the truncated cone $C_{\Omega,R}$ satisfies
\[
\varphi_R \leq {}^{\D}\!E^{C_{\Omega, R}}.
\]
Thus
\[
{}^{\D}\!E^{C_{\Omega, R}}(1,\theta_o)\geq \varphi_R(1,\theta_o)= h_R(1)\to +\infty \,\,\text{ as } \,\, R\to +\infty,
\]
showing that the Dirichlet mean exit time of   $C_\Omega$ is infinite.

Finally, suppose that $\lambda=\lambda_1(\Omega)=2m$. In this case we let $\varphi(r,\theta)=h_R(r)u(\theta)$ where
\[
h_R(r)= \frac 1{m+2}\left[\frac{R^{m+2}}{R^{m+2}-1}(r^2-r^{-m})\log R - r^2\log r\right], \quad 1\leq r\leq R.
\]
It is easily checked that
\[
h_R'' +\frac{m-1} r h'_R -  \frac{2m}{r^2}h_R= -1 \,\text{ in }\, 1\leq r\leq R,
\]
$h_R(1)=h_R(R)=0$ and $h_R(2)\to +\infty$ as $R\to +\infty$.

Arguing as above one shows that the mean exit time ${}^{\D}\!E^{C_{\Omega,1,R}}$ of the region $C_{\Omega,1,R}=\{(r,\theta)\,:\, 1<r<R, \, \, \theta\in \Omega\}$ satisfies ${}^{\D}\!E^{C_{\Omega,1,R}}(2,\theta_o)\to +\infty$ as $R\to +\infty$, and, again, $C_{\Omega}$ has infinite Dirichlet mean exit time.
\end{proof}

As a consequence of the above proposition we have the following

\begin{theorem}\label{th-cone}
Let $(M,g)$ be a $m$-dimensional smooth Riemannian manifold. Assume that there exists a region $M_0$ in $M$ isometric to a warped product cone $C_{\Omega}$.
If $\lambda_1(\Omega) \leq 2m$, then $M$ is $L^1$-Liouville.
\end{theorem}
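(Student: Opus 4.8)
The plan is to deduce the theorem directly from the mean--exit--time estimate of Proposition~\ref{L1-Liouville-cones} together with the comparison principle of Corollary~\ref{thm_comparison_dirichlet}. Since $(M,g)$ has empty boundary and contains an isometric copy $M_0$ of the warped product cone $C_\Omega$, it suffices to exhibit a single point $\bar x \in M_0$ and a sequence $\{M_k\}$ of relatively compact domains contained in $M_0$ (hence in $M$) whose Dirichlet mean exit times satisfy ${}^{\D}\!E^{M_k}(\bar x) \to +\infty$; Corollary~\ref{thm_comparison_dirichlet} then yields $E^M(\bar x)=+\infty$, and hence the $L^1$-Liouville property of $M$.

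First I would transplant, through the isometry $M_0 \cong C_\Omega$, the truncated cones used in the proof of Proposition~\ref{L1-Liouville-cones}(ii). When $\lambda_1(\Omega) < 2m$ these are the domains $C_{\Omega,R}=\{(r,\theta): 0<r<R,\ \theta\in\Omega\}$ with base point $\bar x = (1,\theta_o)$, while in the borderline case $\lambda_1(\Omega)=2m$ one takes instead $C_{\Omega,1,R}=\{(r,\theta): 1<r<R,\ \theta\in\Omega\}$ with $\bar x=(2,\theta_o)$; in either case $\theta_o$ is the point where a fixed first Dirichlet eigenfunction $u$ of $\Delta_\Sigma$ on $\Omega$ attains its maximum, and $\bar x$ does not depend on $R$. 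The proof of Proposition~\ref{L1-Liouville-cones}(ii) produces, via comparison with the explicit separated supersolutions $\varphi_R(r,\theta)=h_R(r)u(\theta)$, precisely the bounds ${}^{\D}\!E^{C_{\Omega,R}}(\bar x)\ge h_R(1)\to+\infty$ (resp. ${}^{\D}\!E^{C_{\Omega,1,R}}(\bar x)\ge h_R(2)\to+\infty$) as $R\to+\infty$. Choosing any divergent sequence $R=R_k$ and letting $M_k$ be the image in $M$ of the corresponding truncated cone completes the reduction.

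The only point requiring care is the regularity of $\partial M_k$: the truncated cone $C_{\Omega,1,R}$ has boundary made of two smooth caps $\{r=1\}\times\bar\Omega$ and $\{r=R\}\times\bar\Omega$ together with the smooth lateral piece $[1,R]\times_r\partial\Omega$, meeting along codimension--two corners, so $\partial M_k$ is merely Lipschitz. This is harmless: the only property of $M_k$ actually used in Corollary~\ref{thm_comparison_dirichlet} is the comparison ${}^{\D}\!G^{M_k}\le G^M$ of~\eqref{ineq_G_D}, whose proof (Lemma~\ref{lemma-transversal} and the proof of Theorem~\ref{th-DGreen}) only requires transversal, hence Lipschitz, boundaries; alternatively one may round off the corners without affecting the divergence of ${}^{\D}\!E^{M_k}(\bar x)$. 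If $C_\Omega$ itself has smooth boundary in $M$ --- for instance when the apex $r=0$ is a smooth point of $(M,g)$ --- one can argue even more cleanly: by Proposition~\ref{L1-Liouville-cones}(ii), Theorem~\ref{thm_D_L_1}, and the dichotomy in the exhaustion construction of ${}^{\D}\!E$, the mean exit time ${}^{\D}\!E^{C_\Omega}$ is identically $+\infty$, so $C_\Omega$ is $\D$-$L^1$-Liouville and the first assertion of Corollary~\ref{thm_comparison_dirichlet} applies verbatim. In all cases the substantive content lies entirely in Proposition~\ref{L1-Liouville-cones}; the remaining work is only this bookkeeping about boundary regularity and about extracting a fixed base point from the truncated--cone estimates, so I do not expect a genuine obstacle here.
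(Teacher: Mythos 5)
Your proposal is correct and follows essentially the same route as the paper: the paper's proof simply observes that, by Proposition \ref{L1-Liouville-cones}, $C_\Omega$ (hence $M_0$) has infinite Dirichlet mean exit time and concludes at once via Corollary \ref{thm_comparison_dirichlet}. Your variant, which instead invokes the truncated-cone sequence form of that corollary at a fixed base point and checks the Lipschitz-corner issue for the truncated cones, merely spells out bookkeeping the paper leaves implicit.
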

\begin{proof}
By Proposition \ref{L1-Liouville-cones} $C_{\Omega}$ has infinite Dirichlet mean exit time, and therefore so does $M_0$. Thus $M$ is $L^1$-Liouville by Corollary \ref{thm_comparison_dirichlet}.
\end{proof}

\section{Appendix}\label{appendix}

In this appendix we give some remarks on $\mathcal{N}$-parabolicity in order to prove that, under the assumption of compact boundary, the $\D$-parabolicity is equivalent to the $\mathcal{N}$-parabolicity. We will also collect some useful facts about functions satisfying weak differential inequalities of the form
\[
-\int_{\inte\,M} \langle \nabla u,\nabla \rho\rangle\geq 0 \quad \forall \,\, 0\leq \rho \in C^\infty_c(\Omega),
\]
where $\Omega$ is a nonnecessarily relatively compact domain in a manifold $M$ with possibly nonempty boundary $\partial M$. Note that by an approximation argument (cf. \cite{IPS_Crelle}, or \cite[Corollary 7]{PV_Extension}), it is equivalent to assume that the above inequality holds for every  $0\leq \rho\in W^{1,2}_c(\Omega)$.

We begin with a version of the usual comparison principle.

\begin{proposition}\label{comparison_weak_Neumann}
Let $\Omega$ be a relatively compact domain in a manifold with boundary $M$.
\begin{itemize}
\item[(i)] If $u\in C^0(\bar \Omega) \cap W^{1,2}_{loc}(\inte\,\Omega)$ satisfies
$$
-\int_{\Omega} \langle\nabla u,\nabla\varphi\rangle \geq 0 \quad \forall \,\,0\leq \rho\in C^\infty_c(\inte\, \Omega),
$$
then
$$
\sup_\Omega u= \sup_{\partial \Omega} u.
$$
\item[(ii)] If $u\in C^0(\bar \Omega) \cap W^{1,2}_{loc}(\Omega)$ satisfies
$$
-\int_{\Omega} \langle\nabla u,\nabla\rho \rangle \geq 0 \quad \forall \,\,0\leq \rho\in C^\infty_c(\Omega),
$$
then
$$
\sup_\Omega u= \sup_{\partial_0 \Omega} u.
$$
\end{itemize}
\end{proposition}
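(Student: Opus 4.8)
The plan is to run the classical weak maximum principle for subsolutions, arguing by contradiction and testing the differential inequality against the truncation $(u-\ell)_+$ at a level $\ell$ chosen just above the supremum of $u$ over the relevant part of $\partial\Omega$. The two items are structurally the same: in (i) the level is compared with $\sup_{\partial\Omega}u$, in (ii) with $\sup_{\partial_0\Omega}u$, and correspondingly the truncation will turn out to be supported compactly in $\inte\,\Omega$ in the first case and only in $\Omega$ in the second. The tools are the density remark recorded just before the statement (so that every nonnegative $\rho\in W^{1,2}_c(\inte\,\Omega)$, resp. $\rho\in W^{1,2}_c(\Omega)$, is an admissible test function), the standard truncation rule for Sobolev functions $\nabla(u-\ell)_+=\chi_{\{u>\ell\}}\nabla u$ a.e.\ --- whence $\langle\nabla u,\nabla(u-\ell)_+\rangle=|\nabla(u-\ell)_+|^2$ a.e.\ --- and the connectedness of the domain $\Omega$. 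One may tacitly assume $\partial\Omega\ne\emptyset$ in (i) and $\partial_0\Omega\ne\emptyset$ in (ii): otherwise $\bar\Omega$ is a compact component of $M$ and the assertion is trivial or vacuous.

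Carrying this out for (i): if $\sup_\Omega u>\sup_{\partial\Omega}u=:m$, fix $\ell\in(m,\sup_\Omega u)$ and set $v:=(u-\ell)_+$. Since $u\in C^0(\bar\Omega)$ and $u\le m<\ell$ on $\partial\Omega$, the set $K:=\overline{\{u>\ell\}}$ is closed in the compact set $\bar\Omega$ and disjoint from $\partial\Omega$; as $\bar\Omega=\inte\,\Omega\sqcup\partial\Omega$, $K$ is a compact subset of $\inte\,\Omega$, so $v\ge 0$ belongs to $W^{1,2}_c(\inte\,\Omega)$. For (ii) one repeats this verbatim with $m$ replaced by $\mu:=\sup_{\partial_0\Omega}u$: here $K$ is disjoint from $\overline{\partial_0\Omega}$, and since the topological frontier $\bar\Omega\backslash\Omega$ is contained in $\overline{\partial_0\Omega}$ (its part in $\inte\,M$ is exactly $\partial_0\Omega$, and its part on $\partial M$ lies in $\overline{\partial_0\Omega}$), $K$ is a compact subset of $\Omega$ and $v\in W^{1,2}_c(\Omega)$. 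In either case, testing the weak inequality against $v$ gives
\[
0\le-\int_\Omega\langle\nabla u,\nabla v\rangle=-\int_\Omega|\nabla v|^2\le0,
\]
so $\nabla v\equiv 0$ a.e.\ on $\Omega$. As $v$ is continuous on $\bar\Omega$ and has a.e.\ vanishing gradient, it is locally constant on $\Omega$, hence constant since $\Omega$ is connected; and because $u<\ell$, by continuity, at points of $\Omega$ arbitrarily close to $\partial\Omega$ (resp.\ $\partial_0\Omega$), that constant is $0$. Thus $u\le\ell$ on $\Omega$, contradicting $\sup_\Omega u>\ell$.

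The one step that is more than bookkeeping, and the natural place for an oversight, is the admissibility check in (ii): one must know that $\supp v$ does not reach the topological frontier $\bar\Omega\backslash\Omega$, where test functions are forced to vanish. This rests precisely on the inclusion $(\bar\Omega\backslash\Omega)\cap\partial M\subseteq\overline{\partial_0\Omega}$, on which set $u$ stays strictly below $\ell$ by continuity; it is immediate for the smooth domains with $\partial_0\Omega$ transversal to $\partial M$ that occur in this paper (there the frontier meets $\partial M$ only in the corner $\overline{\partial_0\Omega}\cap\partial M$), and in general it holds as soon as $\partial_0\Omega$ is dense in $\bar\Omega\backslash\Omega$. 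Note also that in (ii) one cannot close the argument with a Poincar\'e inequality --- $v$ need not vanish along the Neumann portion $\Omega\cap\partial M$ of the boundary --- which is why connectedness of the domain enters in place of a zero boundary condition.
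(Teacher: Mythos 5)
Your proof is correct and is essentially the paper's own argument: both test the weak inequality against the truncation of $u$ at a level just above the relevant boundary supremum (the paper's $w_\epsilon=\max\{u-\sup_{\partial_0\Omega}u-\epsilon,0\}$), deduce that $\int|\nabla v|^2=0$, and derive a contradiction, the only differences being that the paper dismisses (i) as standard and closes by getting $u$ constant on the superlevel set rather than $v$ constant on all of $\Omega$ via connectedness. Your explicit admissibility check in (ii) --- that the truncation is compactly supported in $\Omega$, which rests on the frontier of $\Omega$ along $\partial M$ lying in $\overline{\partial_0\Omega}$ --- is exactly the point the paper passes over silently when it asserts $w_\epsilon\in W^{1,2}_c(\Omega)$, so your write-up is, if anything, more careful on the one delicate step.
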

\begin{proof}
Case (i) is standard. To prove (ii) assume by contradiction that
$\sup_\Omega u >\sup_{\partial_0 \Omega} u+2\epsilon$ and set
$$
w_\epsilon = \max\{ u-\sup_{\partial_0 \Omega} u -\epsilon, 0\}\in C^0(\bar \Omega)\cap W^{1,2}_{c}(\Omega), \quad \Omega_\epsilon= \{x\,:\, u(x)>\sup_{\partial_0 \Omega}u +\epsilon\}\neq \emptyset.
$$
Using $w_\epsilon$ as test function in the weak differential inequality yields
\[
\int_{\Omega_\epsilon}|\nabla u|^2 =0,
\]
and therefore $u$ is constant and equal to  $\sup_{\partial_0 \Omega} u+\epsilon$ on $\Omega_\epsilon$, contradicting the definition of $\Omega_\epsilon$.
\end{proof}

Recall now the following form of the strong maximum principle for weak solutions of $\Delta u\geq 0$, see e.g. \cite[Theorem 8.19, p. 198--199]{GT}, or \cite[Theorem 9]{StongMaxPrinciple}.

\begin{proposition}[Strong Maximum Principle]\label{strong_max_principle} Let $\Omega$ be a (not necessarily bounded) domain in a manifold $M$ and assume that $u\in C^0(\bar{\Omega})\cap W^{1,2}_{loc}(\inte\, \Omega)$ is bounded above and satisfies
\[
-\int_{\Omega} \langle \nabla u, \nabla \rho\rangle \geq 0 \quad \forall \,\,0\leq \rho \in C^\infty_c(\inte\, \Omega).
\]
If there exists $\bar x\in \inte\, \Omega$ such that $u(\bar x)=\sup_\Omega u$, then $u$ is constant in $\Omega$.
\end{proposition}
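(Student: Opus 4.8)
The plan is to localize the statement to Euclidean coordinate balls and feed it through the classical strong maximum principle for weak subsolutions, using a standard open--closed argument on the maximum set.

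First I would observe that it suffices to prove that $u$ is constant on $\inte\,\Omega=\Omega\cap\inte\,M$. Indeed $\inte\,\Omega$ is open and dense in $\Omega$, and it is connected (a connected open subset of a manifold with boundary stays connected once the boundary is removed, e.g.\ by pushing paths inward along a collar of $\partial M$). Hence, once $u$ is known to be constant on $\inte\,\Omega$, continuity of $u\in C^{0}(\bar\Omega)$ propagates the constancy to all of $\bar\Omega$, and in particular to $\Omega$.

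Set $\mu=\sup_{\Omega}u=u(\bar x)$ and let $E=\{x\in\inte\,\Omega:u(x)=\mu\}$. By continuity $E$ is relatively closed in $\inte\,\Omega$, and it is nonempty since $\bar x\in E$. The core of the proof is to show that $E$ is open; granting this, $E=\inte\,\Omega$ by connectedness and we are done. So fix $y\in E$ and choose a coordinate chart $(V,\varphi)$ with $y\in V$ and $\bar V\Subset\inte\,\Omega$. In these coordinates the Laplace--Beltrami operator has the divergence form $\Delta u=\frac{1}{\sqrt{\det g}}\,\partial_{i}\!\big(\sqrt{\det g}\,g^{ij}\partial_{j}u\big)$, whose coefficients on $\varphi(\bar V)$ are smooth and uniformly elliptic, with $\sqrt{\det g}$ bounded above and below. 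Testing the hypothesis against $0\le\rho\in C^{\infty}_{c}(V)$ and changing variables shows that the coordinate representative of $u$ is a $W^{1,2}_{loc}$ weak subsolution, in the Euclidean divergence-form sense, of a uniformly elliptic equation on $\varphi(V)$, attaining its maximum $\mu$ at the interior point $\varphi(y)$. By the Euclidean strong maximum principle for weak subsolutions (namely \cite[Theorem 8.19]{GT}, or \cite[Theorem 9]{StongMaxPrinciple}; equivalently, Moser's weak Harnack inequality applied to the nonnegative weak supersolution $\mu-u$, centered at $\varphi(y)$, where its value is $0$), the coordinate representative is identically $\mu$ on a neighborhood of $\varphi(y)$. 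Therefore $u\equiv\mu$ near $y$, so $E$ is open.

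The only point requiring care is the bookkeeping in the change of variables: one must check that the positive weight $\sqrt{\det g}$, being bounded above and below on $\bar V$, can be absorbed without affecting the uniform ellipticity hypotheses of the cited Euclidean statement, and that the sign convention in $-\int\langle\nabla u,\nabla\rho\rangle\ge0$ matches the ``subsolution'' convention there. Everything else is routine. The substantive analytic input---the De Giorgi--Nash--Moser regularity and Harnack theory behind the strong maximum principle for merely $W^{1,2}$ subsolutions---is quoted rather than reproved; a fully self-contained alternative is to replace the last step by the weak boundary point lemma established elsewhere in this appendix, combined with the open--closed argument above exactly as in the classical Hopf proof, in which case the main obstacle becomes proving that weak boundary point lemma.
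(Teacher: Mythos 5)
Your argument is correct and follows the same route as the paper, which does not prove this proposition but simply cites the Euclidean strong maximum principle for $W^{1,2}$ subsolutions (\cite[Theorem 8.19]{GT}, or \cite[Theorem 9]{StongMaxPrinciple}) as the analytic input. Your chart localization, the absorption of the weight $\sqrt{\det g}$ into the uniformly elliptic divergence-form coefficients, and the open--closed connectedness argument (plus density of $\inte\,\Omega$ in $\Omega$) are exactly the routine details the paper leaves implicit, and they are handled correctly.
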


Indeed, according to \cite[Theorem 8.19]{GT} the conclusion holds even without assuming the continuity of $u$ provided that there exists a ball $B$ with $\bar B\subset \inte\, \Omega$ such that $\sup_B u = \sup_\Omega u$.

\begin{corollary}
\label{weak_bdry_point_lemma}
Let $\Omega$ be a domain in $M$ and assume that $u\in C^0(\bar{\Omega})\cap W^{1,2}_{loc}(\Omega)$ is bounded above and satisfies
\[
-\int_\Omega \langle \nabla u, \nabla \rho\rangle \geq 0 \quad \forall \,\,0\leq \rho \in C^\infty_c(\Omega), \ \ (\text{that is,} \ \ C^\infty_c(\bar{\Omega}\backslash \partial_0 \Omega)).
\]
If there exists $\bar x$ in the interion of $\partial_1 \Omega$ such that $u(\bar x)= \sup_\Omega u$, then $u$ is constant.
\end{corollary}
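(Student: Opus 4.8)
The plan is to reduce the statement to the interior strong maximum principle, Proposition~\ref{strong_max_principle}, by reflecting $u$ evenly across $\partial M$ in a boundary chart around $\bar x$. Since $\bar x$ is an interior point of $\partial_1\Omega$, we may fix a boundary-normal (Fermi) chart centered at $\bar x$ whose domain is a closed half-ball $\{\,|z|\le 2r,\ z^m\ge 0\,\}$ contained in $\Omega$, with flat face $\{z^m=0\}$ corresponding to a portion of $\partial_1\Omega\subset\partial M$. In such coordinates $z^m$ is the distance to $\partial M$, so the metric has the block form $g_{\alpha m}\equiv 0$ for $1\le\alpha\le m-1$ and $g_{mm}\equiv 1$; consequently the even reflection $z^m\mapsto -z^m$ produces a Lipschitz metric $\hat g$ on the full ball $B_{2r}(0)$, given by $\hat g_{\alpha\beta}(z',z^m)=g_{\alpha\beta}(z',|z^m|)$, $\hat g_{\alpha m}=0$, $\hat g_{mm}=1$, which after possibly shrinking $r$ is uniformly elliptic. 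We correspondingly set $\tilde u(z',z^m)=u(z',|z^m|)$, so that $\tilde u\in C^0(B_{2r})\cap W^{1,2}_{loc}(B_{2r})$ is bounded above and $\tilde u\le u(\bar x)=\sup_\Omega u$ on $B_{2r}$, with equality at the interior point $0$.

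The heart of the argument is to check that $\tilde u$ is a weak subsolution of the divergence-form Laplacian of $\hat g$ on $B_{2r}$, i.e.\ that $-\int_{B_{2r}}\langle\nabla\tilde u,\nabla\tilde\rho\rangle_{\hat g}\,d\mathrm{vol}_{\hat g}\ge 0$ for every $0\le\tilde\rho\in C^\infty_c(B_{2r})$. Splitting the integral over the upper and lower half-balls and using that $(z',z^m)\mapsto(z',-z^m)$ is, by construction of $\hat g$, an isometry between them carrying $\tilde u$ to $u$, one finds that $-\int_{B_{2r}}\langle\nabla\tilde u,\nabla\tilde\rho\rangle_{\hat g}\,d\mathrm{vol}_{\hat g}=-\int_{B^+_{2r}}\langle\nabla u,\nabla\psi\rangle_{g}\,d\mathrm{vol}_{g}$, where $\psi$ is the restriction to the half-ball of $z\mapsto\tilde\rho(z',z^m)+\tilde\rho(z',-z^m)$. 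Now $\psi\ge 0$ is smooth and compactly supported inside $\{\,|z|\le 2r,\ z^m\ge 0\,\}$ away from its round part, hence extends by zero to an admissible test function in $C^\infty_c(\Omega)=C^\infty_c(\bar\Omega\backslash\partial_0\Omega)$, and the hypothesis yields the claimed inequality. The block form $\hat g_{\alpha m}\equiv 0$, $\hat g_{mm}\equiv 1$ of the Fermi metric is precisely what makes $\hat g$ reflection-invariant and is the crux of this step, which is otherwise routine change-of-variables bookkeeping; I also record the mild point that the strong maximum principle invoked below, Proposition~\ref{strong_max_principle}, rests on \cite[Theorem 8.19]{GT}, valid for divergence-form operators with merely bounded measurable uniformly elliptic coefficients, so the fact that $\hat g$ is only Lipschitz across $\{z^m=0\}$ causes no difficulty.

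With this in hand, $\tilde u$ is a bounded-above weak subsolution on the domain $B_{2r}$ attaining its supremum at the interior point $0$, so by Proposition~\ref{strong_max_principle} it is constant, $\tilde u\equiv\sup_\Omega u$ on $B_{2r}$. In particular $u\equiv\sup_\Omega u$ on $\{\,|z|<2r,\ z^m\ge 0\,\}$, so $u$ attains the value $\sup_\Omega u$ at some genuine interior point $p\in\inte\, M$ with $p\in\Omega$, that is $p\in\inte\,\Omega$ (any point of the half-ball with $z^m>0$). Since $u\in C^0(\bar\Omega)\cap W^{1,2}_{loc}(\Omega)$ is bounded above and, by restriction of the test functions, satisfies $-\int_\Omega\langle\nabla u,\nabla\rho\rangle\ge 0$ for all $0\le\rho\in C^\infty_c(\inte\,\Omega)$, a final application of Proposition~\ref{strong_max_principle}, now at the interior maximum point $p$, gives that $u$ is constant on $\Omega$, hence by continuity on $\bar\Omega$. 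When $\Omega=M$ --- the case relevant to Appendix~\ref{appendix} --- the chart above is available around every point of $\partial M$ and the argument specializes without change.
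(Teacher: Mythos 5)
Your argument is correct, but it follows a genuinely different route from the paper's. The paper's proof is a two-line reduction: since $\bar x$ lies in the interior of $\partial_1\Omega$, one picks a neighborhood $B$ of $\bar x$ with $B\cap\partial_0\Omega=\emptyset$ and applies Proposition~\ref{comparison_weak_Neumann}\,(ii) to $B$, so that $u(\bar x)=\sup_B u=\sup_{\partial_0 B}u$; this transfers the attainment of $\sup_\Omega u$ to a point of $\inte\,\Omega$, and Proposition~\ref{strong_max_principle} concludes. You instead double the metric and the function across $\partial M$ in boundary-normal coordinates, verify by the change-of-variables computation that the even reflection $\tilde u$ is a weak subsolution for the reflected Lipschitz metric (the block structure $g_{\alpha m}=0$, $g_{mm}=1$ is indeed what makes this work, and your reduction of the test class to $\tilde\rho+\tilde\rho\circ\sigma$ is the right bookkeeping), and then invoke the strong maximum principle for divergence-form operators with bounded measurable coefficients. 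This buys something the paper's proof does not state explicitly --- constancy of $u$ on a full half-ball around $\bar x$, so interior attainment is manifest rather than inferred from a supremum over the (possibly non-compact) set $\partial_0 B$ --- and it makes transparent why the Neumann-type test class encodes a zero-flux condition; the price is the chart construction and the appeal to \cite[Theorem 8.19]{GT} beyond the smooth-metric setting of Proposition~\ref{strong_max_principle}, which you correctly flag. Two shared implicit points are worth recording: the existence of a half-ball neighborhood of $\bar x$ inside $\bar\Omega$ with flat face in $\partial_1\Omega$ uses that no points of $\partial_0\Omega$ accumulate at $\bar x$ (this is the intended meaning of ``interior of $\partial_1\Omega$'' and is exactly what the paper uses to choose $B$) together with a small connectedness argument; and the reflection $\tilde u\in W^{1,2}_{loc}(B_{2r})$ requires $u$ to have locally finite energy up to $\partial_1\Omega$, which is anyway implicit in the statement, since otherwise the integral against test functions not vanishing on $\partial_1\Omega$ would not be defined.
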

\begin{proof}
Indeed, assume that such a point $\bar x$ exists.
Take a neighborhood $B$ of $\bar x$ such that $B\cap \partial_0 \Omega=\emptyset$. According to
Proposition~\ref{comparison_weak_Neumann} (ii), $u(\bar x)= \sup_B u = \sup_{\partial_0 B} u$, so that there exists $x_o\in \inte\, \Omega$ such that $u(x_o)=\sup_\Omega u$ and $u$ is constant by the strong maximum principle above.
\end{proof}

The above result shows that if  $u\in C^0(\bar{\Omega})\cap W^{1,2}_{loc}(\Omega)$ is bounded above and satisfies
the differential inequality
$$
-\int_\Omega \langle\nabla u, \nabla \rho\rangle \geq 0 \quad \forall \,\,0\leq \rho \in C^\infty_c(\Omega),
$$
then $u$ cannot attain its supremum on $\partial_1 \Omega$, showing that the validity of the differential inequality for test functions which do not vanish on $\partial_1 \Omega$ is indeed the weak replacement of the
inequality $\partial u/\partial \nu\leq 0$ on $\partial_1 \Omega$ for $C^1$ functions, and that Corollary \ref{weak_bdry_point_lemma} may be viewed as a weak version of the boundary point lemma.

Using this we may also obtain the following characterization of Neumann parabolicity for manifolds $M$ with compact boundary $\partial M$.

\begin{proposition}
\label{cns_n-parabolicity} Let $M$ be a manifold with compact boundary $\partial M$. Then $M$ is $\mathcal{N}$-parabolic if and only if
\begin{itemize}
\item[(1)] for every $u\in C^0(M)\cap W^{1,2}_{loc}(\intM)$ which is bounded above and satisfies
\begin{equation}
\label{weak-subharmonic}
-\int_{\intM} \langle \nabla u, \nabla\rho \rangle \geq 0\quad \forall \,0\leq \rho\in C^\infty_c(\intM),
\end{equation}
we have
\[
\sup_M u = \sup_{\partial M} u.
\]
\end{itemize}
\end{proposition}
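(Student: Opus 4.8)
The plan is to establish the two implications separately, the engine being the fact that $\mathcal N$-parabolicity always forces $\mathcal D$-parabolicity (Corollary~\ref{cor_IPS}) together with the weak boundary point lemma of Corollary~\ref{weak_bdry_point_lemma}.

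\textbf{From $\mathcal N$-parabolicity to (1).} I would show that this implication holds with no compactness assumption on $\partial M$, being in fact a consequence of mere $\mathcal D$-parabolicity. Let $u$ be as in (1); set $\mu=\sup_{\partial M}u$, which is a real number since $\partial M\neq\emptyset$ and $u$ is bounded above, and replace $u$ by $u-\mu$, so that $\sup_{\partial M}u=0$ and the claim reduces to $u\le 0$ on $M$. Put $w=\max\{u,0\}$. Then $w\in C^{0}(M)\cap W^{1,2}_{loc}(\intM)$, $w\ge 0$, $w=0$ on $\partial M$, and $0\le w\le\max\{\sup_M u,0\}<+\infty$, so $w$ is \emph{bounded}; moreover, being the maximum of the weak subsolution $u$ and of the (trivially harmonic) function $0$, it satisfies $\Delta w\ge 0$ weakly on $\intM$ against test functions in $C^{\infty}_c(\intM)$, that is, \eqref{weak-subharmonic}. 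Since $\mathcal N$-parabolicity implies $\mathcal D$-parabolicity, Proposition~\ref{DirParSubharmonic}\,(iii) applies to $w$ and yields $\sup_M w=\sup_{\partial M}w=0$, whence $w\equiv 0$ and $u\le 0$ on $M$; combined with the trivial inequality $\sup_M u\ge \sup_{\partial M}u$ this gives the required identity. (Alternatively, when ``$\Delta u\ge 0$ in $\intM$'' is read as \eqref{weak-subharmonic}, this is essentially the content of Corollary~\ref{cor_IPS}.)

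\textbf{From (1) to $\mathcal N$-parabolicity.} Here compactness of $\partial M$ enters decisively. Let $u\in C^{0}(M)\cap W^{1,2}_{loc}(\intM)$ be bounded above and a weak solution of \eqref{Neumann_parabolicity}, i.e. satisfy \eqref{Neumann_parabolicity2}; I must show $u$ is constant. Since $C^{\infty}_c(\intM)\subset C^{\infty}_c(M)$, the inequality \eqref{weak-subharmonic} holds for $u$, so hypothesis (1) gives $\sup_M u=\sup_{\partial M}u$. Because $\partial M$ is compact and $u$ is continuous, this common value is attained at some $\bar x\in\partial M$, that is, $u(\bar x)=\sup_M u$. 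I would then invoke Corollary~\ref{weak_bdry_point_lemma} with $\Omega=M$: for this choice $\partial_0 M=\emptyset$ and $\partial_1 M=\partial M$, so the admissible test functions $C^{\infty}_c(\bar\Omega\setminus\partial_0\Omega)$ reduce to $C^{\infty}_c(M)$, which matches \eqref{Neumann_parabolicity2}; and since $\partial M$, as the boundary of a manifold with boundary, is itself a manifold \emph{without} boundary, the point $\bar x$ lies in the interior of $\partial_1 M$. Corollary~\ref{weak_bdry_point_lemma} then forces $u$ to be constant, which is exactly $\mathcal N$-parabolicity of $M$.

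\textbf{Anticipated obstacle.} The argument is mostly bookkeeping with the various test-function classes, and I expect the only delicate points to be: (i) in the first implication, checking that the truncation $w=\max\{u,0\}$ remains a weak subsolution on $\intM$ \emph{and} is genuinely bounded, so that the bounded-function statement of Proposition~\ref{DirParSubharmonic} is applicable --- it is here, and only here, that ``$u$ bounded above'' gets converted into boundedness of an auxiliary function; and (ii) in the second implication, recognising that compactness of $\partial M$ is precisely what upgrades $\sup_M u=\sup_{\partial M}u$ to attainment of the global supremum at an \emph{interior} point of $\partial_1 M$, which is the hypothesis of Corollary~\ref{weak_bdry_point_lemma}. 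The closed half-space $\bar\rr^{m+1}_+$, which satisfies (1) but is not $\mathcal N$-parabolic, shows that this compactness cannot be dispensed with.
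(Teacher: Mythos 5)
Your proof is correct. The converse implication ((1) $\Rightarrow$ $\mathcal N$-parabolicity) is the paper's own argument almost verbatim: restrict the Neumann test class to $C^\infty_c(\intM)$, invoke (1), use compactness of $\partial M$ to produce a boundary point where the global supremum is attained, and conclude via Corollary~\ref{weak_bdry_point_lemma} applied with $\Omega=M$ (so that $\partial_0\Omega=\emptyset$ and $\partial_1\Omega=\partial M$). Where you genuinely diverge is the forward implication: the paper argues directly from the definition of $\mathcal N$-parabolicity, truncating $u$ just above $\sup_{\partial M}u$ so that the truncated function vanishes in a neighborhood of $\partial M$, is therefore a bounded weak solution of \eqref{Neumann_parabolicity2} for the full test class $C^\infty_c(M)$, and hence is constant; this keeps the appendix self-contained and uses nothing from Section~\ref{Sect_PotDir}. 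You instead pass to $\mathcal D$-parabolicity (via Corollary~\ref{cor_IPS}) and apply Proposition~\ref{DirParSubharmonic}\,(iii) to the bounded truncation $\max\{u-\sup_{\partial M}u,0\}$ (or quote Corollary~\ref{cor_IPS} outright). This is not circular, since Proposition~\ref{DirParSubharmonic} is established independently of the appendix, and it has the added value of making explicit that this half of the equivalence needs no compactness of $\partial M$ and follows from $\mathcal D$-parabolicity alone; your observation that $\bar\rr^{m+1}_+$ then witnesses the necessity of compactness in the converse is a nice complement not spelled out in the paper. The only bookkeeping caveat, which the paper's own proof shares, is the mild mismatch between the class $W^{1,2}_{loc}(\intM)$ used in the definition of weak Neumann subsolutions and the class $W^{1,2}_{loc}(\Omega)$ with $\Omega=M$ appearing in Corollary~\ref{weak_bdry_point_lemma}: pairing $\nabla u$ against test functions that do not vanish on $\partial M$ implicitly requires square-integrability of the gradient up to the boundary, exactly as in the paper.
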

\begin{proof}
Assume that $M$ is $\mathcal{N}$-parabolic, and let $u$ be a function satisfying the condition listed in (1). Suppose by contradiction that $\sup_M u > \sup_{\partial M} u +2 \epsilon$, and let $v=\max\{ u-\sup_{\partial M} u+ \epsilon, 0\}$. Then $v$ satisfies \eqref{weak-subharmonic}, it is bounded above, and vanishes in a neighborhood of $\partial M$, so that, by the assumed $\mathcal{N}$-parabolicity of $M$, $v$ is constant and equal to $0$. Hence $u\leq \sup_{\partial M} u$ on $M$, contradiction.
To prove the reverse implication, assume that condition (1) holds and let $v\in C^0(M)\cap W^{1,2}_{loc}(M)$ be bounded above, and satisfy
\[
-\int_M \langle \nabla v, \nabla \rho\rangle \geq 0 \quad \forall \,\,0\leq \rho \in C^\infty_c(M).
\]
In particular, $v$ satisfies the differential inequality in (1), and it follows that $\sup_M v =\sup_{\partial M} v$. Since $\partial M$ is compact, there exists $\bar x \in \partial M$ such that $\sup_{\partial M} v= v(\bar x) =\sup_M v$, whence $v$ is constant by Corollary \ref{weak_bdry_point_lemma} and $M$ is $\mathcal{N}$-parabolic by definition.
\end{proof}

Indeed, the above result can be weakened by requiring that (1) holds for bounded harmonic functions.

\begin{corollary}\label{cns_n-parabolicity2}
Let $M$ be a manifold with compact boundary $\partial M$. Then $M$ is $\mathcal{N}$-parabolic if and only if for every $u\in C^0(M)$ bounded and satisfying $\Delta u=0$ in $\intM$ we have $\sup_M u = \sup_{\partial M} u$.
\end{corollary}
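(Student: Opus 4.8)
The plan is to derive the statement from Proposition~\ref{cns_n-parabolicity}, whose characterization of $\mathcal{N}$-parabolicity is phrased for the wider class of bounded above weak subsolutions, and then to upgrade the (formally weaker) maximum principle for bounded harmonic functions to the one for bounded above subharmonic functions.

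First I would dispose of the trivial implication: if $M$ is $\mathcal{N}$-parabolic, then by Proposition~\ref{cns_n-parabolicity} every bounded above $u\in C^0(M)\cap W^{1,2}_{loc}(\intM)$ satisfying $-\int_{\intM}\langle\nabla u,\nabla\rho\rangle\ge 0$ for all $0\le\rho\in C^\infty_c(\intM)$ obeys $\sup_M u=\sup_{\partial M}u$; a bounded harmonic function is in particular such a $u$, so the condition in the statement holds.

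For the converse I would verify condition (1) of Proposition~\ref{cns_n-parabolicity}. Let $u\in C^0(M)\cap W^{1,2}_{loc}(\intM)$ be bounded above and weakly subharmonic in $\intM$, and suppose by contradiction that $\sup_M u>\sup_{\partial M}u=:\mu$. Since $u\le\mu$ on $\partial M$, the truncation $\tilde u=\max\{u-\mu,0\}$ is a nonnegative, bounded, weakly subharmonic function on $\intM$ with $\tilde u=0$ on $\partial M$ and $\sup_M\tilde u>0$. This is exactly the configuration handled inside the proof of Proposition~\ref{DirParSubharmonic}: by the exhaustion/reduction (balayage) procedure used there, with a barrier at each boundary point to control the boundary values, one obtains a bounded harmonic function $w\in C^0(M)\cap C^\infty(\intM)\cap L^\infty(M)$ with $w\ge\tilde u$ on $M$ and $w=0$ on $\partial M$. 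Then $\sup_M w\ge\sup_M\tilde u>0=\sup_{\partial M}w$, which contradicts the hypothesis applied to the bounded harmonic function $w$. Hence $\sup_M u=\sup_{\partial M}u$, so condition (1) of Proposition~\ref{cns_n-parabolicity} holds and $M$ is $\mathcal{N}$-parabolic.

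The step I expect to be the main obstacle is the construction of the harmonic majorant $w$ with vanishing boundary values on $\partial M$: one must produce, for every point of $\partial M$, a local barrier dominating $\tilde u$ so that the least harmonic majorant furnished by the reduction does not acquire positive boundary values. Since $\partial M$ is smooth (here even compact) this is classical, and it is precisely the point carried out in the quoted proof of Proposition~\ref{DirParSubharmonic}; alternatively one may invoke Grigor'yan's exhaustion argument directly. A routine secondary check is that $\tilde u$ inherits the local $W^{1,2}$ regularity and the weak subsolution property, which holds because the maximum of a weak subsolution with a constant is again a weak subsolution.
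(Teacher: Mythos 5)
Your proposal is correct and follows essentially the paper's own route: both directions reduce to Proposition~\ref{cns_n-parabolicity}, and the nontrivial implication is obtained by truncating the bounded-above subsolution at its boundary supremum and producing a bounded harmonic function vanishing on $\partial M$ that dominates the truncation, thereby contradicting the hypothesis. The only (inessential) difference is in how that harmonic function is produced: the paper truncates at $\sup_{\partial M}v+\epsilon$ so the truncation vanishes near the compact boundary and then solves mixed Dirichlet problems on an exhaustion (data $0$ on $\partial M$, $\mu$ on $\partial_0\Omega_k$) and passes to a decreasing limit, whereas you invoke the harmonic-majorant (balayage/barrier) construction already used in the proof of Proposition~\ref{DirParSubharmonic}, which handles the same configuration and is legitimate here.
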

\begin{proof}
It suffices to show that if the condition in the statement holds, then $M$ is $\mathcal{N}$-parabolic. Suppose to the contrary that $M$ is not $\mathcal{N}$-parabolic, and therefore, by Proposition \ref{cns_n-parabolicity}, that there exists $v\in C^0(M)\cap W^{1,2}_{loc}(\intM)$ which is bounded above, satisfies
\[
-\int_{\intM} \langle \nabla v,  \nabla \rho\rangle \geq 0 \quad \forall \,\,0\leq \rho \in C^\infty_c(\inte\,M),
\]
and such that
\[
\sup_M v > \sup_{\partial M} v+ 2\epsilon.
\]
It follows that $w=\max\{v-\sup_{\partial M} v- \epsilon, 0\}\in C^0(M)\cap W^{1,2}_{loc}(\intM)
$ vanishes in a neighborhood of $\partial M$, is nonnegative, bounded above by $\mu=\sup_M v -\sup_{\partial M} v- \epsilon$ and subharmonic in $\inte\,M$.

Let $\{\Omega_k\}$ be an exhaustion of $M$ by relatively compact open sets with smooth boundary such that $\partial_1 \Omega_k =\partial M$ and $\bar{\Omega}_k \subset \Omega_{k+1}$ $\forall \,k$, and let $u_k$ be the solution of
\begin{equation*}
\left\lbrace
\begin{array}{rl}
\Delta u_k = 0 & \text{in } \inte\,\Omega_k\\
u_k=0 & \text{on } \partial_1 \Omega_k = \partial M\\
u_k=\mu & \text{on } \partial_0\Omega_k.
\end{array}\right.
\end{equation*}
By comparison,  $w\leq u_k$ and $u_k\geq u_{k+1}$ for every $k$, and therefore $u_k$ converges to a function $u$ which is harmonic in $\inte\,M$, satisfies $0\leq u\leq \mu$, vanishes on $\partial M$ and $u\geq w$. Thus
$\sup_M u\geq \sup_M w >0=\sup_{\partial M} u$, contradiction.
\end{proof}

As a simple consequence of Corollary \ref{cns_n-parabolicity2} we remark the fact that Neumann parabolicity is equivalent to Dirichlet parabolicity in the case of compact boundary. We note that the following result in particular applies to ends of a manifold $M$ without boundary.

\begin{corollary}\label{D vs N parabolicity bdr M compact}
Let $M$ be a manifold with compact boundary $\partial M$. Then $M$ is $\mathcal{N}$-parabolic if and only if it is $\mathcal{D}$-parabolic.
\end{corollary}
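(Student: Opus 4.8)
The plan is to dispose of the two implications separately, noting that only the converse genuinely uses the compactness of $\partial M$. One direction is already in hand: by Corollary \ref{cor_IPS}, every $\N$-parabolic manifold is $\D$-parabolic, and this holds irrespective of whether $\partial M$ is compact. So the substance lies entirely in showing that a $\D$-parabolic manifold with compact boundary is $\N$-parabolic.

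For that converse I would invoke the reduction already established in Corollary \ref{cns_n-parabolicity2}: when $\partial M$ is compact, $\N$-parabolicity is equivalent to the Ahlfors-type property that every bounded $u\in C^0(M)$ with $\Delta u=0$ in $\intM$ satisfies $\sup_M u=\sup_{\partial M}u$. Given such a $u$, interior elliptic regularity makes it $C^\infty(\intM)\cap C^0(M)$, hence an admissible function in the characterization of $\D$-parabolicity furnished by Proposition \ref{Dirichlet_Ahlfors} (or, regarding $u$ as a bounded subharmonic function, by Proposition \ref{DirParSubharmonic}). The assumed $\D$-parabolicity then yields exactly $\sup_M u=\sup_{\partial M}u$, and Corollary \ref{cns_n-parabolicity2} closes the argument.

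I do not expect a genuine obstacle here: once the preparatory results of the Appendix are available — in particular the weak boundary point lemma (Corollary \ref{weak_bdry_point_lemma}) underlying Corollary \ref{cns_n-parabolicity2} — the statement is a short interface between two already-proven characterizations. The only conceptual point worth flagging is where compactness of $\partial M$ is really used: it enters through Corollary \ref{cns_n-parabolicity2}, because one needs a point of $\partial M$ at which $\sup_{\partial M}u$ is attained in order to run the strong maximum principle; without it the equivalence fails, the closed Euclidean half space $\bar\rr^{m+1}_+$ of Example \ref{ex_D_parabolicity} being $\D$-parabolic but not $\N$-parabolic. I would also remark that, applied to an end of a boundaryless manifold viewed as a manifold with compact boundary, this gives the equivalence of $\D$- and $\N$-parabolicity at the level of ends.
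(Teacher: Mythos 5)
Your argument is correct and follows exactly the route the paper intends: the paper presents this corollary as a direct consequence of Corollary~\ref{cns_n-parabolicity2}, with the easy direction coming from Corollary~\ref{cor_IPS} and the converse from feeding the Ahlfors-type characterization of $\D$-parabolicity (Proposition~\ref{Dirichlet_Ahlfors}, after interior elliptic regularity) into Corollary~\ref{cns_n-parabolicity2}. Your added remarks on where compactness of $\partial M$ is used and on the half-space counterexample are accurate and consistent with Example~\ref{ex_D_parabolicity}.
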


We conclude this appendix recalling the Khas'minskii test for Neumann parabolicity.
\begin{proposition}[Khas'minskii test]\label{Khas-N-parabolicity}
Let $M$ be a manifold with boundary $\partial M$. If there exist a compact set $K\subset M$ and a  function $0\leq \phi\in C^0(M\backslash \text{\r{K}})\cap W^{1,2}_{loc}(M\backslash K)$  such that $\phi(x)\to +\infty $ as $x$ diverges and
\[
-\int_{M\backslash K} \langle \nabla \phi,\nabla \rho\rangle \leq 0 \quad \forall \,\, 0\leq \rho\in C^0(M\backslash \text{\r{K}})\cap W^{1,2}_{loc}(M\backslash K),
\]
then $M$ is $\mathcal{N}$-parabolic.
\end{proposition}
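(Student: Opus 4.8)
The plan is to follow the proof of the Dirichlet Khas'minskii test, Lemma~\ref{Khas-D-parabolicity}, almost line by line, replacing the classical interior comparison principle by its Neumann analogue Proposition~\ref{comparison_weak_Neumann}(ii), and the classical strong maximum principle and boundary point lemma by Proposition~\ref{strong_max_principle} and Corollary~\ref{weak_bdry_point_lemma}. The only genuinely new point, absent in the Dirichlet setting, is that the superlevel set of the auxiliary potential need no longer avoid $\partial M$, which is precisely why a true Neumann comparison enters.

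So let $u\in C^0(M)\cap W^{1,2}_{loc}(\inte\, M)$ be bounded above and a weak solution of \eqref{Neumann_parabolicity} in the sense of \eqref{Neumann_parabolicity2}; I want to show $u$ is constant, arguing by contradiction. First I would note, via Proposition~\ref{strong_max_principle} and Corollary~\ref{weak_bdry_point_lemma}, that a non-constant such $u$ attains the finite value $\sup_M u$ nowhere on $M$; since $K$ is compact this forces $\sup_K u<\sup_M u$. I then fix $\gamma\in(\sup_K u,\sup_M u)$, pick $x_o\in\inte\, M$ with $u(x_o)>\gamma$ (the nonempty open set $\{u>\gamma\}$ must meet $\inte\, M$, as $\partial M$ is nowhere dense), and set $v=u-\gamma-\epsilon\phi$ on $M\backslash\text{\r{K}}$ with $\epsilon>0$ so small that $v(x_o)>0$. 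Combining the weak inequality satisfied by $u$ with the one in the hypothesis for $\phi$ gives $-\int_{\inte\, M\backslash K}\langle\nabla v,\nabla\rho\rangle\ge 0$ for every admissible $\rho\ge 0$, so $v$ inherits the Neumann subharmonicity on $M\backslash K$, with the condition on $\partial M$ retained.

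Next I would examine $\Omega=\{v>0\}$. On $\partial K$ one has $u\le\sup_K u<\gamma$ and $\phi\ge 0$, hence $v<0$ there; so $\overline\Omega\cap\partial K=\emptyset$, i.e. $\overline\Omega\subset M\backslash K$. Moreover $\overline\Omega\subset\{\epsilon\phi\le\sup_M u-\gamma\}$, which is compact because $\phi(x)\to+\infty$ as $x$ diverges, so $\Omega$ is relatively compact. Since $v=0$ on $\partial\Omega$ by continuity, viewing $\Omega$ as a relatively compact domain in $M$ with Neumann boundary $\partial_1\Omega=\overline\Omega\cap\partial M$, Proposition~\ref{comparison_weak_Neumann}(ii) gives $\sup_\Omega v=\sup_{\partial_0\Omega}v=0$, contradicting $v(x_o)>0$. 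This contradiction shows $u$ is constant, i.e. $M$ is $\mathcal N$-parabolic.

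I expect the point requiring the most care — and the one feature truly different from the Dirichlet case — to be that $\Omega$ may now meet $\partial M$, so one must invoke the \emph{Neumann} comparison Proposition~\ref{comparison_weak_Neumann}(ii) rather than an interior maximum principle. Two minor technical issues accompany this: one should check that $v$ has $W^{1,2}_{loc}$ regularity up to $\partial M$ (a standard Caccioppoli estimate obtained by testing the weak inequality against functions not vanishing on $\partial M$, which is exactly what is needed for Proposition~\ref{comparison_weak_Neumann}(ii) to apply), and one should dispose of the degenerate possibility $\partial_0\Omega=\emptyset$, which cannot occur since it would force $M$ (assumed connected) to be compact, against the existence of the proper function $\phi$.
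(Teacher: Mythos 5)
Your proof is correct and follows essentially the same route as the paper's: a contradiction argument in which $u$ cannot attain its supremum (strong maximum principle plus Corollary~\ref{weak_bdry_point_lemma}), the auxiliary function $v=u-\gamma-\epsilon\phi$ whose positivity set $\Omega$ is bounded and avoids $K$, and the Neumann comparison Proposition~\ref{comparison_weak_Neumann}(ii) applied on $\Omega$ to force $v\leq 0$, contradicting $v(x_o)>0$. Your additional remarks (regularity of $v$ up to $\partial M$ and the degenerate case $\partial_0\Omega=\emptyset$) address technicalities the paper's proof passes over silently, but they do not change the argument.
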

\begin{proof}
It is a variation of the proof of the Khas'minskii test for $\mathcal{D}$-parabolicity. Let $u\in C^0(M)\cap W^{1,2}_{loc}(M)$ be bounded above and satisfy
\[
-\int_{M} \langle \nabla u, \nabla \rho\rangle \geq 0 \quad \forall \,\,0\leq \rho \in C^\infty_c(M),
\]
and assume by contradiction that $u$ is nonconstant. According to Proposition~\ref{comparison_weak_Neumann} and Corollary~\ref{weak_bdry_point_lemma}, $u$ cannot attain its supremum on $M$. Assuming without loss of generality that $\sup_M u>0$, choose  $0<\gamma <\sup_M u$ in such a way that $\sup_K u \leq \gamma$. Pick $x_o\in M\backslash K$ such that $u(x_o)>\gamma$, and set $v(x)= u-\gamma -\epsilon \phi$ where $\epsilon>0$ is  small enough that $v(x_o)>0$. Finally, let $\Omega=\{x\in M\backslash \text{\r{K}}\,:\, v(x)>0\}$. Since $\phi(x)\to +\infty$ as $x$ diverges, and $v(x)<0$ on $\partial K$, $\Omega$ is bounded, nonempty and  $\bar \Omega\cap K=\emptyset $. Since $v=0$ on $\partial \Omega$ and
\[
-\int_{\Omega} \langle \nabla v,\nabla \rho\rangle \geq 0 \quad \forall \,\, 0\leq \rho\in C_{c}^{\infty}(\Omega),
\]
by  Proposition~\ref{comparison_weak_Neumann} (ii), we have $v\leq 0$ on $\Omega$, contradiction.
\end{proof}

\subsection*{Acknowledgements} The authors would like to thank Luciano Mari for helpful conversations concerning the proof of Theorem \ref{thm_L1_nonnegative_curv}. They are also grateful to the anonymous referee for pointing out the relationships between Dirichlet parabolicity and criticality theory, for providing the proof in this framework of the existence of the Dirichlet Green's kernel and for suggesting several relevant references. Finally, they are grateful to Baptiste Devyver for having suggested the example in  Remark~\ref{KernelsVsParabolicity}.

\end{document}